\newcommand{\eqref}[1]{(\ref{#1})}
\newtheorem{theorem}{Theorem}[section]
\newtheorem{lemma}[theorem]{Lemma}
\newtheorem{proposition}[theorem]{Proposition}
\newtheorem{corollary}[theorem]{Corollary}
\newcommand{\mc}[1]{{\mathcal #1}}
\newcommand{\mf}[1]{{\mathfrak #1}}
\newcommand{\mb}[1]{{\mathbf #1}}
\newcommand{\bb}[1]{{\mathbb #1}}
\newcommand{\bs}[1]{{\bolds #1}}
\newcommand{\mt}[1]{{\texttt #1}}
\renewcommand{\Cap}{\operatorname{cap}}
\begin{document}
\begin{frontmatter}

\title{Zero-temperature limit of the Kawasaki dynamics for the Ising
lattice gas in a large two-dimensional torus}
\runtitle{Kawasaki dynamics at low temperature}

\begin{aug}
\author[A]{\fnms{B.}~\snm{Gois}\ead[label=e1]{brunog@impa.br}}
\and
\author[B]{\fnms{C.}~\snm{Landim}\corref{}\ead[label=e2]{landim@impa.br}}
\runauthor{B. Gois and C. Landim}
\affiliation{IMPA, and IMPA and Universit\'e de Rouen}
\address[A]{IMPA\\
Estrada Dona Castorina 110\\
CEP 22460 Rio de Janeiro\\
Brasil\\
\printead{e1}} 
\address[B]{IMPA\\
Estrada Dona Castorina 110\\
CEP 22460 Rio de Janeiro\\
Brasil\\
and\\
CNRS UMR 6085\\
Universit\'e de Rouen\\
Avenue de l'Universit\'e, BP.12\\
Technop\^ole du Madril\-let\\
F76801 Saint-\'Etienne-du-Rouvray\\
France\\
\printead{e2}}
\end{aug}

\received{\smonth{5} \syear{2013}}
\revised{\smonth{2} \syear{2014}}

%
\begin{abstract}
We consider the Kawasaki dynamics at inverse temperature $\beta$ for
the Ising lattice gas on a two-dimensional square of length $2L+1$
with periodic boundary conditions. We assume that initially the
particles form a square of length $n$, which may increase, as well
as $L$, with $\beta$. We show that in a proper time scale the
particles form almost always a square and that the center of mass of
the square evolves as a Brownian motion when the temperature
vanishes.
\end{abstract}

%
\begin{keyword}[class=AMS]
\kwd{60K35}
\kwd{82C22}
\end{keyword}
\begin{keyword}
\kwd{Ising model}
\kwd{Kawasaki dynamics}
\kwd{zero-temperature limit}
\kwd{scaling limit}
\kwd{adsorption}
\kwd{Brownian motion}
\end{keyword}
\end{frontmatter}

\section{Introduction and main results}
\label{sec-1}

We introduced in \cite{bl2,bl7} a general method to describe the
asymptotic evolution of one-parameter families of continuous-time
Markov chains among the ground states or from high energy sets to
lower energy sets. This method has been successfully applied in two
situations. For zero-range dynamics on a finite set which exhibit
condensation \cite{bl3,l1}, and for random walks evolving among
random traps \cite{jlt1,jlt2}. In the first model, the chain admits a
finite number of ground sets and one can prove that in an appropriate
time scale the process evolves as a finite state Markov chain, each
state corresponding to a ground set of the original chain. In the
second model, there is a countable number of ground states and one can
prove that in a certain time scale the process evolves as a
continuous-time Markov chain in a countable state space, each point
representing one of the ground states. In this paper, which follows
\cite{bl4,bl5}, we investigate a third case, where the limit dynamics
is a continuous process.

We consider the Ising lattice gas on a torus subjected to a Kawasaki
dynamics at inverse temperature $\beta$. Let $\bb T_L = \{-L, \ldots,
L\}^2$, $L\ge1$, be a square with periodic boundary
conditions. Denote by $\bb T^*_L$ the set of bonds of $\bb T_L$. This
is the set of unordered pairs $\{x,y\}$ of $\bb T_L$ such that $\Vert
x-y \Vert=1$, where $\Vert \cdot\Vert$ stands for the Euclidean
distance. The configurations are denoted by $\eta= \{\eta(x) \dvtx x\in
\bb T_L\}$, where $\eta(x) =1$ if site $x$ is occupied and $\eta(x)=0$
if site $x$ is vacant. The Hamiltonian $\bb H$, defined on the state
space $\Omega_L = \{0,1\}^{\bb T_L}$, is given by
\[
- \bb H (\eta) = \sum_{\{x,y\}\in\bb T^*_L} \eta(x) \eta(y).
\]
The Gibbs measure at inverse temperature $\beta$ associated to the
Hamiltonian $\bb H$, denoted by $\mu_\beta$, is given by
\[
\mu_\beta(\eta) = \frac{1}{Z_\beta} e^{-\beta\bb H(\eta)},
\]
where $Z_\beta$ is the normalizing partition function.

We consider the continuous-time Markov chain $\{\eta^\beta_t \dvtx t\ge
0\}$ on $\Omega_L$ whose generator $L_\beta$ acts on functions
$f\dvtx \Omega_L \to\bb R$ as
\[
(L_\beta f) (\eta) = \sum_{\{x,y\}\in\bb T^*_L}
c_{x,y}(\eta) \bigl[f\bigl(\sigma^{x,y}\eta\bigr) - f(\eta)
\bigr],
\]
where $\sigma^{x,y}\eta$ is the configuration obtained from $\eta$ by
exchanging the occupation variables $\eta(x)$ and $\eta(y)$:
\[
\bigl(\sigma^{x,y}\eta\bigr) (z) = \cases{ \eta(z), &\quad $\mbox{if $z\neq
x, y$},$ \vspace*{2pt}
\cr
\eta(y), &\quad $\mbox{if $z = x$},$ \vspace*{2pt}
\cr
\eta(x),
&\quad $\mbox{if $z = y$}.$}
\]
The rates $c_{x,y}$ are given by
\[
c_{x,y} (\eta) = \exp \bigl\{-\beta \bigl[\bb H\bigl(
\sigma^{x,y}\eta\bigr) - \bb H(\eta)\bigr]_+ \bigr\},
\]
and $[a]_+$, $a\in\bb R$, stands for the positive part of $a$: $[a]_+
= \max\{a,0\}$. We sometimes represent $\eta^\beta_t$ by
$\eta^\beta(t)$ and we frequently omit the index $\beta$ of
$\eta^\beta_t$.

A simple computation shows that the Markov process $\{\eta_t \dvtx t\ge
0\}$ is reversible with respect to the Gibbs measures $\mu_\beta$,
$\beta>0$, and ergodic on each irreducible component formed by the
configurations with a fixed total number of particles. Let
$\Omega_{L,K} = \{\eta\in\Omega_L \dvtx \sum_{x\in\bb T_L} \eta(x) =
K\}$, $0\le K\le|\bb T_L|$, and denote by $\mu_K = \mu_{\beta, K}$
the Gibbs measure $\mu_\beta$ conditioned on $\Omega_{L,K}$:
\[
\mu_{K} (\eta) = \frac{1}{Z_{\beta, K}} e^{-\beta\bb
H(\eta)},\qquad \eta\in
\Omega_{L,K},
\]
where $Z_K=Z_{\beta, K}$ is the normalizing constant $Z_{K} =
\sum_{\eta\in\Omega_{L,K}} \exp\{-\beta\bb H(\eta)\}$.

Let $D(\bb R_+, \Omega_{L,K})$ be the space of right-continuous with
left limits trajectories $e\dvtx \bb R_+ \to\Omega_{L,K}$ endowed with
the Skorohod topology. For each configuration $\eta\in\Omega_{L,K}$,
denote by $\mb P^\beta_{\eta}$ the probability measure on $D(\bb R_+,
\Omega_{L,K})$ induced by the Markov process $\{\eta_t \dvtx t\ge0\}$
starting from $\eta$. Expectation with respect to $\mb P^\beta_{\eta}$
is represented by $\mb E^\beta_{\eta}$. Sometimes we omit the index
$\beta$ in the notation. The reader should be warned that we consider
in this article two types of asymptotics involving the probability
measure $\mb P^\beta_{\eta}$. In Section~\ref{sec2}, for example, we
examine the limit of certain events in the case where the set $\bb
T_L$ and the number of particles $K$ are fixed while
$\beta\uparrow\infty$. In contrast, in all theorem stated in this
section $L$ and $K$ depend on $\beta$.

Assume from now on that $K=n^2$ for some $n\ge1$. Denote by $Q$ the
square $\{0,\ldots, n-1\}\times\{0,\ldots, n-1\}$. For $\mb x\in\bb
T_L$, let $Q_{\mb x} = \mb x + Q$ and let $\eta^{\mb x}$ be the
configuration in which all sites of the square $Q_{\mb x}$ are
occupied. Denote by $\Gamma$ the set of square configurations
\[
\Gamma = \bigl\{\eta^{\mb x}\dvtx \mb x\in\bb T_L \bigr\}.
\]
A simple computation \cite{bl5} shows that if $L>2n$ the ground states
of the energy $\bb H$ in $\Omega_{L,K}$ are the square configurations
\[
\bb H_{\mathrm{min}}:= \min_{\eta\in\Omega_{L,K}} \bb H(\eta) = \bb H\bigl(
\eta^{\mb x}\bigr) = - 2n(n-1),
\]
and $\bb H(\eta) > -2n(n-1)$ for all $\eta\in\Omega_{L,K} \setminus
\Gamma$.

We examine in this article the asymptotic evolution of the Markov
process $\{\eta_t \dvtx t\ge0\}$ among the $|\bb T_L|$ ground
states $\{\eta^{\mb x}\dvtx \mb x\in\bb T_L \}$ as the temperature
vanishes, while the volume and the number of particles increase not
too fast.

\subsection*{The trace process on $\Gamma$}
\label{sec01}

Denote by $\xi(t)$ the trace of the process $\eta(t)$ on $\Gamma$
and by $r_\beta(\mb x, \mb y) = R_\xi(\eta^{\mb x},\eta^{\mb y})$
the jump rates of the trace process $\xi(t)$. By symmetry, it is clear
that $r_\beta(\mb x, \mb y)= r_\beta(0,\mb y- \mb x) =: r_\beta(\mb y-
\mb x)$.

Let $\mb X(\eta^{\mb x}) = \mb x$, $\mb x\in\bb T_L$, so that $X
(t) = \mb X(\xi(t))$ is a random walk on $\bb T_L$ which jumps from
$\mb x$ to $\mb y$ at rate $r_\beta(\mb y-\mb x)$ and which starts
from the origin. Denote by $X_1(t)$, $X_2(t)$ the components of $X(t)$ and
by $N^{\mb x}_t$ the number of jumps of size $\mb x$ performed by the
random walk $X (s)$ in the time interval $[0,t]$. Since, by symmetry,
$r_\beta(- \mb x) = r_\beta(\mb x)$,
\[
X (t) - X (0) = \sum_{\mb x\in\bb T_L} \mb x
N^{\mb x}_t = \sum_{\mb x\in\bb T_L} \mb x
\bigl\{ N^{\mb x}_t - r_\beta(\mb x) t \bigr\} =
\sum_{\mb x\in\bb T_L} \mb x M^{\mb x}_t,
\]
where $M^{\mb x}_t$, $\mb x\in\bb T_L$, are orthogonal martingales
with quadratic variation $\langle  M^{\mb x}\rangle_t$ equal to $r_\beta(\mb x)
t$. Hence, if we represent by $\langle X_1\rangle_t$, $\langle X_2\rangle_t$ the predictable
quadratic variations of the components of the random walk $X(t)$,
%
\begin{equation}
\label{12} \langle X_1\rangle_t + \langle
X_2\rangle_t = \sum_{\mb x\in\bb T_L}
\Vert\mb x\Vert^2 r_\beta(\mb x) t.
\end{equation}

Let
%
\begin{equation}
\label{16} \frac{1}{\theta_\beta}:= \frac{1}t \mb E_{\eta^{\mb0}} \bigl[
\bigl\Vert X(t)\bigr\Vert^2\bigr] = \frac{1}t \mb E_{\eta^{\mb0}}
\bigl[\langle X_1\rangle_t + \langle X_2
\rangle_t\bigr] = \sum_{\mb x\in\bb T_L} \Vert\mb x
\Vert^2 r_\beta(\mb x).
\end{equation}
Hence, $\theta^{-1}_\beta$ is the diffusion constant of the centre
of the square, that is, the mean-square displacement per unit of time.
We prove in Section~\ref{sec8} that under the assumption that
%
\begin{equation}
\label{c09} \lim_{\beta\to\infty} n^4 L^2
e^{-\beta} =0,
\end{equation}
there exist constants $0<c_0<C_0<\infty$, independent of $\beta$, such
that
%
\begin{equation}
\label{39} c_0 \frac{ n} {L^2} e^{2\beta} \le
\theta_\beta \le C_0 n^2 e^{2\beta}.
\end{equation}
We adopt this convention throughout the paper, $0<c_0<C_0<\infty$ are
constants independent of $\beta$ whose value may change from line to
line. Let
%
\begin{equation}
\label{c13} \mb e (\beta) = L e^{-\beta/2} + \sqrt{n^7
\bigl[n^4e^{-\beta} + n L e^{-\beta/2}\bigr]},
\end{equation}
which is the order of the error which appears in Proposition
\ref{lc01}. Under the stronger assumption that
%
\begin{equation}
\label{c10} \limsup_{\beta\to\infty} \frac{ L^2}{n^2} \mb e (\beta) <
\infty,
\end{equation}
we have that
%
\begin{equation}
\label{c12} c_0 \frac{1} {n^2} e^{2\beta} \le
\theta_\beta \le C_0 n^2 e^{2\beta}.
\end{equation}
We comment below in Remark \ref{sb03} on the exact order of
$\theta_\beta$. Entropy effects, evidenced by the presence of many
paths of the same cost connecting two ground states, turn the
estimation of $\theta_\beta$ into hard problem.

\begin{theorem}
\label{s10}
Assume that $\eta_0 = \eta^{\mb0}$ and that $L=L(\beta)$,
$n=n(\beta)$ depend on the temperature. Consider a sequence
$\ell=\ell(\beta)$ such that $1\ll\ell\le L$, and assume that
%
\begin{equation}
\label{c06} \lim_{\beta\to\infty} \ell L \theta_\beta
e^{-2\beta} \mb e(\beta) = 0,
\end{equation}
and that for all $\delta>0$,
%
\begin{equation}
\label{c07} \lim_{\beta\to\infty} \ell(\ell+ n) \theta_\beta
e^{-2\beta} e^{-\delta\ell/n} = 0.
\end{equation}
Let $Z^\beta(t) = X(t \ell^2 \theta_\beta)/\ell$. Then, as $\beta\to
\infty$, $Z^\beta(t)$ converges in the Skorohod topology to a Brownian
motion.
\end{theorem}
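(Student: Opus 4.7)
The plan is to apply a functional martingale central limit theorem (Rebolledo's criterion) to the martingale decomposition $X(t)-X(0)=\sum_{\mb x\in \bb T_L}\mb x\, M^{\mb x}_t$ written down above. The first step is to lift the torus-valued walk to $\bb Z^2$: choose for each $\mb x\in \bb T_L$ a representative $\tilde{\mb x}\in\bb Z^2$ of minimal Euclidean norm, set $\tilde X(t)=\sum_{\mb x}\tilde{\mb x}\, M^{\mb x}_t$, and define $Z^\beta(t)=\tilde X(t\ell^2\theta_\beta)/\ell$. Then $Z^\beta$ is an $\bb R^2$-valued martingale whose coordinates are sums of mutually orthogonal martingales, and the convergence to a Brownian motion will follow provided $X$ and $\tilde X$ coincide on compact time intervals with high probability, i.e.\ the walk does not wrap around the torus.

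Next I would compute the predictable quadratic variation of $Z^\beta$. Translation invariance of the trace rates together with the $\pi/2$-rotational symmetry of the ground-state square $Q$ yields $r_\beta(\mb x)=r_\beta(R\mb x)$ for every lattice rotation $R$, and therefore $\sum_{\mb x} x_i^2\, r_\beta(\mb x)=(2\theta_\beta)^{-1}$ for $i=1,2$ and $\sum_{\mb x} x_1 x_2\, r_\beta(\mb x)=0$. Consequently $\<Z^\beta_i\>_t=t/2$ and $\<Z^\beta_1, Z^\beta_2\>_t=0$ hold deterministically, so the quadratic-variation condition in Rebolledo's theorem is automatic.

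The heart of the argument is the Lindeberg-type bound ruling out macroscopic jumps: for every $\varepsilon>0$,
\begin{equation*}
t\,\theta_\beta\sum_{\Vert\tilde{\mb x}\Vert>\varepsilon\ell}\Vert\tilde{\mb x}\Vert^2\, r_\beta(\mb x)\;\longrightarrow\;0\quad\text{as }\beta\to\infty\;.
\end{equation*}
The key analytic input is an exponential tail estimate $r_\beta(\mb x)\le C\, e^{-2\beta}\, e^{-c\Vert\mb x\Vert/n}$, up to polynomial factors, which reflects the fact that any tunnelling trajectory of the Kawasaki dynamics that translates the square by $\mb x$ must detach of order $\Vert\mb x\Vert/n$ particles from its boundary. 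Hypothesis (\ref{c06}) would be used to absorb the error $\bs e(\beta)$ produced when the rates $r_\beta(\mb x)$ are read off from Proposition \ref{lc01}, and hypothesis (\ref{c07}) exactly matches the geometric tail factor $e^{-\delta\ell/n}$ that remains after summation.

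Finally, tightness in the Skorohod topology follows from Aldous's criterion: the predictable quadratic variations are linear in $t$, and the maximum jump of $Z^\beta$ on any $[0,T]$ vanishes in probability by the Lindeberg step. Rebolledo's theorem then identifies every weak limit point as a continuous $\bb R^2$-valued martingale with quadratic variation $(t/2)\,I$ and vanishing cross-variation; Lévy's characterization finishes the proof. The main obstacle, I expect, is the uniform exponential tail bound on $r_\beta(\mb x)$: it requires a combinatorial enumeration of low-energy paths among the $|\bb T_L|$ ground states of $\Gamma$ and is precisely where the error $\bs e(\beta)$ and the exact shape of hypotheses (\ref{c06})--(\ref{c07}) enter the argument.
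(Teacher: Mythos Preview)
Your proposal is correct and follows essentially the same route as the paper: lift to $\bb Z^2$, apply a martingale functional CLT (the paper cites \cite[Theorem VII.1.4]{ek1}, which is equivalent to Rebolledo's criterion), use the lattice symmetries to get $\<Z^\beta_i,Z^\beta_j\>_t=(t/2)\delta_{ij}$ deterministically, and reduce everything to a jump/Lindeberg condition.

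One clarification on the ``key analytic input'': the paper does not prove a pointwise bound $r_\beta(\mb x)\le C e^{-2\beta} e^{-c\Vert\mb x\Vert/n}$. What it proves (Lemma~\ref{lc02}) is the tail estimate $\mb P^{\widehat\zeta}_{\eta^{\mb 0}}[H^+_\Gamma=H_{\blacklozenge_k}]\le \gamma^{(k-4)/3}$ with $\gamma\le e^{-c_0/2n}$, for the \emph{approximating} chain $\widehat\zeta$; Proposition~\ref{lc01} transfers this to $\zeta$ at the cost of an \emph{additive} error $\bs e(\beta)$ uniform in $k$. After summation by parts this additive error produces the term $\ell L\theta_\beta e^{-2\beta}\bs e(\beta)$ controlled by \eqref{c06}, while the geometric piece gives $\ell(\ell+n)\theta_\beta e^{-2\beta}e^{-c_0\delta\ell/n}$ controlled by \eqref{c07}. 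Your last paragraph shows you have this picture right; just be aware that the exponential decay is obtained through the layered stopping-time argument of Lemma~\ref{lc02} for $\widehat\zeta$, not by counting detached particles along tunnelling paths of the original dynamics.
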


If $\ell=L$, the limiting Brownian motion evolves on the
two-dimensional torus $[-1,1)^2$, while if $\ell\ll L$, the limiting
Brownian motion evolves on $\bb R^2$. We discuss in Remark \ref{sb01}
the assumptions \eqref{c06} and \eqref{c07}.

\textit{Energy landscape.}
Denote by $\bb H_j$, $j\ge0$, the set of configurations with energy
equal to $\bb H_{\mathrm{min}} + j = -2n(n-1)+j$:
\[
\bb H_j = \bigl\{\eta\in\Omega_{L,K} \dvtx \bb H(\eta) =
\bb H_{\mathrm{min}} +j\bigr\},
\]
and let
%
\begin{equation}
\label{04} \Delta_j = \bigl\{\eta\in\Omega_{L,K}\dvtx
\bb H (\eta) > \bb H_{\mathrm{min}} + j \bigr\},
\end{equation}
so that $\bb H_0 = \Gamma$ and $\{\bb H_{0j}, \Delta_j \}$ forms a
partition of the set $\Omega_{L,K}$, where $\bb H_{ij} = \bigcup_{i\le
k\le j} \bb H_k$.\vspace*{1pt}

We have shown in \cite{bl5} that the energy landscape of the Kawasaki
dynamics at low temperature starting from a ground state configuration
has the form illustrated by Figure~\ref{fig1a}. There are subsets
$\Omega^j$, $1\le j\le4$, of $\bb H_1$ and subsets $\Sigma_{j,j+1}$,
$0\le j\le3$, of $\bb H_2$, defined in Section~\ref{sec3}, satisfying
the following properties. Let $\Omega_c = \bb H_1 \setminus[\bigcup_j
\Omega^j]$, $\Sigma_c = \bb H_2 \setminus[\bigcup_j
\Sigma_{j,j+1}]$. Denote by $\Pi$, $\Pi'$ generic sets appearing in
Figure~\ref{fig1a}. The process $\eta_t$ may jump from a
configuration in $\Pi$ to a configuration in $\Pi$ or to a
configuration in $\Pi'$ if $\Pi$ and $\Pi'$ are joined by an edge in
Figure~\ref{fig1a}. In particular, starting from $\Gamma$, the process
$\eta_t$ may only reach $\Omega_c$ by crossing the set~$\Delta_2$.

\begin{figure}[b]

\includegraphics{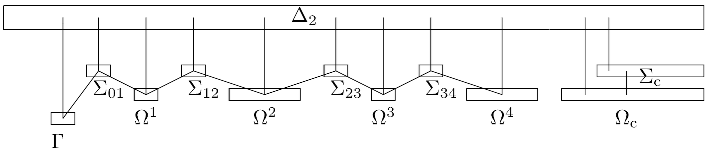}

\caption{The energy landscape of the Kawasaki dynamics at low
temperature. $\Gamma$ represents the set of ground states,
$\Omega^j$, $1\le j\le4$, disjoint subsets of $\bb H_1$,
$\Sigma_{i,i+1}$, $0\le i\le3$, disjoint subsets of $\bb H_2$, and
$\Omega_{\mathrm c}= \bb H_1 \setminus[\bigcup_{1\le j\le4} \Omega^j]$,
$\Sigma_{\mathrm c}= \bb H_2 \setminus[\bigcup_{0\le i\le3}
\Sigma_{i,i+1}]$. At low temperatures, during an
excursion between two ground states, with probability very close
to $1$, the process does not visit the set $\Delta_2$ and all the
analysis is reduced to the lower-left portion of the picture.}
\label{fig1a}
\end{figure}

Let $\Xi_\star$ be the set of configurations which can be reached from
$\eta^{\mb0}$ without crossing the set $\Delta_2$, $\Xi_\star=
\Gamma\bigcup_{1\le i\le4} \Omega^i \bigcup_{0\le j\le3} \Sigma_{j,j+1}$
in the notation of Figure~\ref{fig1a}. This set is described in the
next section and in Section~\ref{sec7}.

\begin{theorem}
\label{s18}
Assume that
%
\begin{equation}
\label{40} \lim_{\beta\to\infty} \ell^2
\theta_\beta e^{-2\beta} \bigl(n^8 + L^2
\bigr) e^{-\beta} = 0.
\end{equation}
Then, for every $t>0$,
%
\begin{equation}
\label{54} \lim_{\beta\to\infty} \mb P_{\eta^{\mb0}} \bigl[
\eta(s)\notin\Xi_\star\mbox{ for some } 0\le s\le t \ell^2
\theta_\beta \bigr] = 0.
\end{equation}
\end{theorem}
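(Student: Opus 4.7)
By construction $\Xi_\star$ is the connected component of $\eta^{\mb 0}$ in the graph on $\Omega_{L,K}\setminus\Delta_2$ whose edges are the single-swap Kawasaki transitions, so the event in \eqref{54} coincides with $\{\tau_{\Delta_2}\le T\}$, where $T := t\ell^2\theta_\beta$ and $\tau_A$ denotes the hitting time of $A$. Each visit to $\Delta_2$ is preceded by at least one jump from $\Xi_\star$ into $\Delta_2$, so Markov's inequality together with the standard formula for the expected number of jumps of a continuous-time Markov chain gives
\begin{equation*}
\mb P_{\eta^{\mb 0}}[\tau_{\Delta_2}\le T] \;\le\; \int_0^T \sum_{\eta\in\Xi_\star} \mb P_{\eta^{\mb 0}}[\eta(s)=\eta]\, R(\eta)\, ds,
\end{equation*}
where $R(\eta) := \sum_{\eta'\in\Delta_2} c(\eta,\eta')$ is the total rate of jumping from $\eta$ into $\Delta_2$. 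I would split $\Xi_\star = \Gamma\cup(\bb H_1\cap\Xi_\star)\cup(\bb H_2\cap\Xi_\star)$ and bound each piece with the best of two estimates on $\mb P_{\eta^{\mb 0}}[\eta(s)=\eta]$: the aggregate bound $\sum_{\eta\in A}\mb P_{\eta^{\mb 0}}[\eta(s)=\eta]=\mb P_{\eta^{\mb 0}}[\eta(s)\in A]\le 1$ and the reversibility bound $\mb P_{\eta^{\mb 0}}[\eta(s)=\eta]\le \mu(\eta)/\mu(\eta^{\mb 0}) = \exp\{-\beta[\bb H(\eta)-\bb H_{\rm min}]\}$.

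For $\eta\in\Gamma$, direct inspection of the Kawasaki moves from a perfect square shows that only the $4(n-2)$ moves of an edge particle to the outside have energy cost at least $3$, so $R(\eta)\le 4(n-2)e^{-3\beta}$; combined with the aggregate bound on $\Gamma$ this piece contributes at most $C T n\,e^{-3\beta}$. For $\eta\in\bb H_j\cap\Xi_\star$ with $j\in\{1,2\}$, apply reversibility to obtain $\mb P_{\eta^{\mb 0}}[\eta(s)=\eta]\le e^{-j\beta}$ and combine it with the cost bound $c(\eta,\eta')\le e^{-(3-j)\beta}$ valid for every $\eta'\in\Delta_2$; the two exponential factors multiply to a common $e^{-3\beta}$, giving
\begin{equation*}
\int_0^T \sum_{\eta\in\bb H_j\cap\Xi_\star} \mb P_{\eta^{\mb 0}}[\eta(s)=\eta]\, R(\eta)\, ds \;\le\; T\, e^{-3\beta}\, E_j,
\end{equation*}
where $E_j$ counts the pairs $(\eta,\eta')$ with $\eta\in\bb H_j\cap\Xi_\star$, $\eta'\in\Delta_2$ and $\eta'=\sigma^{x,y}\eta$ for some bond $\{x,y\}$. (Equivalently, detailed balance rewrites the flux as $\sum_{\eta'\in\Delta_2}\mu(\eta')\cdot|\{\eta\in\bb H_j\cap\Xi_\star : \eta'=\sigma^{x,y}\eta\}|$, in which $c(\eta',\eta)=1$ because the energy strictly decreases, and the dominant $\bb H_3$ contribution is what produces the $e^{-3\beta}$ scaling.)

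Collecting the three pieces and substituting $T=t\ell^2\theta_\beta$ yields
\begin{equation*}
\mb P_{\eta^{\mb 0}}[\tau_{\Delta_2}\le T] \;\le\; C t\, \ell^2 \theta_\beta\, e^{-2\beta}\,(n+E_1+E_2)\, e^{-\beta},
\end{equation*}
so in view of \eqref{40} the theorem reduces to the combinatorial estimate $E_1+E_2\le C(n^8+L^2)$ (which automatically absorbs the $n$ from the $\Gamma$ piece). This combinatorial estimate is the main obstacle of the proof: it requires the explicit description of the sets $\Omega^j$ and $\Sigma_{j,j+1}$ developed in Section \ref{sec3} (building on \cite{bl5}) in order to enumerate, for each near-square configuration at energy level $1$ or $2$ reachable from $\Gamma$ without crossing $\Delta_2$, the swaps that raise the energy to $\bb H_{\rm min}+3$ or higher. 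One expects the $L^2$ term to account for the translational degrees of freedom of the underlying square, and the polynomial factor $n^8$ to track the internal combinatorial data of these excited configurations (position of a detached particle along the sides, length of the incipient new row, and so on). Once this bound is in place, the right-hand side above tends to zero as $\beta\to\infty$ by hypothesis \eqref{40}.
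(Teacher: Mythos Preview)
Your overall strategy is the paper's: reduce \eqref{54} to $\{\tau_{\Delta_2}\le T\}$, bound this by the expected number of jumps from $\Xi_\star$ into $\Delta_2$, stratify $\Xi_\star$ by energy level, and control each stratum by an occupation-time estimate times a rate estimate. The rate bounds $c(\eta,\eta')\le e^{-(3-j)\beta}$ and the detailed-balance rewriting are correct.

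The gap is in the occupation-time estimate. The reversibility inequality $\mb P_{\eta^{\mb 0}}[\eta(s)=\eta]\le \mu_K(\eta)/\mu_K(\eta^{\mb 0})=e^{-j\beta}$ is valid but loses a factor $|\bb T_L|\sim L^2$, and this loss makes your combinatorial claim $E_1+E_2\le C(n^8+L^2)$ false. The set $\Xi_\star$ is translation invariant, so every piece of $E_j$ carries a \emph{multiplicative} $L^2$ from the position $\mb x$ of the underlying square: already $|\Omega^2|\sim L^2 n^7$, and each $\Omega^2$-configuration has at least one swap into $\Delta_2$, forcing $E_1\ge c\,L^2 n^7$, which exceeds $C(n^8+L^2)$ as soon as $L^2\gg n$. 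The $L^2$ appearing additively in \eqref{40} is \emph{not} the translational freedom of the square; it is the number of positions of the detached particle in a $\Sigma_{0,1}$-type configuration, counted \emph{per translate}.

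The paper recovers the missing $L^{-2}$ by averaging over translates (the computation around \eqref{37}): for any translation-invariant $A$,
\[
\mb E_{\eta^{\mb 0}}\Big[\int_0^T \mb 1\{\eta(s)\in A\}\,ds\Big]
=\frac{1}{|\bb T_L|}\sum_{\mb y\in\bb T_L}\mb E_{\eta^{\mb y}}\Big[\int_0^T \mb 1\{\eta(s)\in A\}\,ds\Big]
\le \frac{T\,\mu_K(A)}{\mu_K(\Gamma)}\,,
\]
which is exactly your bound divided by $|\bb T_L|$. With this in place the relevant count is $(E_1+E_2)/L^2$, and the enumeration of $\Xi_\star$ carried out in Sections~\ref{sec3}--\ref{sec6} gives $(E_1+E_2)/L^2\le C(n^8+L^2)$, the $n^8$ coming from the $\bb H_2$-bridges between $\Omega^2$ (resp.\ $\Omega^4$) configurations and the extra $L^2$ from the wandering particle in $\Sigma_{0,1}\cup\Sigma_{2,3}$. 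Then \eqref{40} closes the argument.
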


The set $\Xi_\star$ is small. In the next sections, we show that this
set has less than $C_0 L^2 (n^8 + L^2)$ elements and that it is
constituted of configurations whose particles form $n\times n$
squares, $(n+1)\times(n-1)$ and $(n+2)\times(n-2)$ rectangles.

Configurations whose particles form a $(n+k)\times(n-k)$ rectangle,
$4<k^2<n+k$, and whose remaining $k^2$ particles form a $1 \times k^2$
rectangle attached to one side of the large rectangle belong to $\bb
H_1 \cap\Xi_\star^c$. It follows from this observation and from the
definition of the set $\Xi_\star$, presented in the next section,
that even the cardinality of $\Xi_\star\cap\bb H_1$ is small
compared to the cardinality of~$\bb H_1$.

The previous two remarks show that in the time scale $\ell^2
\theta_\beta$ the Kawasaki dynamics is confined to a tiny portion of
the state space. It is this rather simple energy landscape which
permits the analysis of the evolution of the center of mass. In
dimension $3$, for example, the energy landscape is much more
complicated insomuch that, though we believe that a similar result holds,
the proof requires a much more intricated argument.

\textit{Evolution of the center of mass.}
Due to the periodic boundary conditions, the center of mass of a
configuration may not be well defined. However, the particles of a
configuration $\eta\in\Xi_\star\setminus(\Sigma_{0,1} \cup
\Sigma_{2,3})$ form a connected set and have, therefore, a well
defined center of mass. On the other hand, the particles of a
configuration $\eta$ in $\Sigma_{0,1}$ form a square in which a
particle at a corner of the square has been moved to a site whose
neighbors are vacant or became vacant after the displacement of the
particle at the corner. The detached particle may create an ambiguity
in the definition of the center of mass of $\eta$. To avoid this
problem we define the center of mass of a configuration $\eta\in
\Sigma_{0,1}$ as the center of mass of the large connected
component. Of course, if $n$ is large and $L$ is not too large
compared to $n^2$ the detached particle does not affect the center of
mass. This analysis extends to configurations in $\Sigma_{2,3}$. With
the previous convention and since by Theorem \ref{s18} the process is
confined to $\Xi_\star$ in the time scale $\ell^2 \theta_\beta$, we
may examine the evolution of the center of mass without ambiguity.

Denote by $\texttt{M}(\eta)$ the center of mass of a configuration
$\eta\in\Xi_\star$ and set, say, $\mt M(\xi)=\mb0$ for configurations
$\xi\notin\Xi_\star$. Denote by $[a]$ the integer part of $a>0$.

\begin{theorem}
\label{sc01}
Assume that $\eta(0)=\eta^{-\mb n}$, where $\mb n = ([n/2], [n/2])$,
and that the hypotheses of Theorems \ref{s10} and \ref{s18} are in
force. Suppose also that $(n^7+L)e^{-\beta}\to0$. Let $\mt M^\beta(t)
= \mt M(\eta(t \ell^2 \theta_\beta))/\ell$. There exists $c_0>0$ such
that if $n\ge c_0$ for all $\beta$ large enough, then, as
$\beta\uparrow\infty$, $\mt M^\beta(t)$ converges in the Skorohod
topology to a Brownian motion.
\end{theorem}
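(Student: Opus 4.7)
The approach is to couple the center of mass $\mt M(\eta(t))$ with $X(S(t)) + \mb c$, where $\mb c = ((n-1)/2,(n-1)/2)$ and $S(t) = \int_0^t \mb 1\{\eta(s)\in\Gamma\}\, ds$ is the clock of the trace process, and then invoke Theorem \ref{s10}. By Theorem \ref{s18} and \eqref{40}, the trajectory remains inside $\Xi_\star$ throughout $[0, T\ell^2\theta_\beta]$ with probability $1-o(1)$; we restrict to this event $A_\beta$. When $\eta(t) \in \Gamma$ the identity $\mt M(\eta(t)) = X(S(t)) + \mb c$ is exact, so only the discrepancy during excursions in $\Xi_\star \setminus \Gamma$ needs to be controlled.

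The structural description of $\Xi_\star$ from Sections \ref{sec3} and \ref{sec7} --- every configuration is an $n \times n$ square, an $(n+1)\times(n-1)$ or $(n+2)\times(n-2)$ rectangle, possibly with an $O(1)$ number of boundary-adjacent modifications --- yields that during a single excursion from $\eta^{\mb x}$ to $\eta^{\mb y}$, the center of mass $\mt M(\eta(t))$ stays within distance $C_0 + |\mb y-\mb x|$ of $\mb x+\mb c$: rectangle centers differ from those of their enclosing squares by at most $(1,-1)$, each attached or displaced particle shifts $\mt M$ by $O(1/n)$, and the intermediate rectangles lie spatially between the entry and exit squares. Consequently, on $A_\beta$,
\begin{equation*}
\sup_{0\le t\le T\ell^2\theta_\beta}\frac{|\mt M(\eta(t)) - X(S(t)) - \mb c|}{\ell} \;\le\; \sup_{0\le s\le T}\big|Z^\beta(s) - Z^\beta(s-)\big| \;+\; O(1/\ell),
\end{equation*}
and the right-hand side tends to $0$ in probability since Theorem \ref{s10} gives convergence of $Z^\beta$ to a continuous Brownian motion, forcing the maximal jump to vanish. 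A separate low-temperature estimate (escape rates from $\Gamma$ are $\le 4e^{-2\beta}$ while return rates from any $\Xi_\star$-excursion configuration are $O(1)$, with the condition $(n^7+L)e^{-\beta}\to 0$ excluding atypical long excursions) shows that the fraction of time spent outside $\Gamma$ is $O(e^{-2\beta})$, so $\tau^\beta(t) := S(t\ell^2\theta_\beta)/(\ell^2\theta_\beta)$ converges to the identity uniformly in probability on compact sets.

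Combining the two bounds gives $\mt M^\beta(t) = Z^\beta(\tau^\beta(t)) + \mb c/\ell + o(1)$ uniformly on $[0,T]$. Theorem \ref{s10} (applied after the torus translation by $-\mb n$, matching the initial condition $\eta^{-\mb n}$) yields $Z^\beta \Rightarrow B$ for a Brownian motion $B$ starting at $\mb 0$; joint convergence with $\tau^\beta \to \mathrm{id}$ and the continuous mapping theorem then give $Z^\beta\circ\tau^\beta \Rightarrow B$. The choice $\mb n = ([n/2],[n/2])$ ensures $\mt M^\beta(0) = (-\mb n + \mb c)/\ell = O(1/\ell) \to 0$, matching $B(0) = \mb 0$, so $\mt M^\beta \Rightarrow B$.

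The main obstacle is the excursion discrepancy bound: rigorously verifying that throughout any $\Xi_\star$-excursion the intermediate rectangles lie geometrically between the entry and exit squares (so that $\mt M$ does not wander outside the segment joining $\eta^{\mb x}+\mb c$ and $\eta^{\mb y}+\mb c$ by more than $O(1)$) requires a careful case analysis of excursion paths using the combinatorial structure of Sections \ref{sec3} and \ref{sec7}. The hypothesis $n\ge c_0$ enters at this point to rule out small-$n$ degeneracies in which the rectangle-square correspondence becomes ambiguous; the assumption $(n^7+L)e^{-\beta}\to 0$ is used to bound the probability of the few excursion shapes for which this correspondence is subtle, ensuring their cumulative contribution to the discrepancy is negligible in the time scale $\ell^2\theta_\beta$.
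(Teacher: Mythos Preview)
Your overall strategy---reduce to the trace $X$ on $\Gamma$ via a time change and control the discrepancy during excursions away from $\Gamma$---matches the paper's. The time-change claim $\tau^\beta\to\mathrm{id}$ is correct in conclusion, though your justification is off: from configurations in $\bb H_1\cap\Xi_\star$ the jump rates are $O(e^{-\beta})$, not $O(1)$; the paper obtains the estimate from the stationary-measure ratio $\mu_\zeta(\Gamma^c)/\mu_\zeta(\Gamma)\le C_0 n^7 e^{-\beta}$ (see \eqref{c14}), and this is where the hypothesis $n^7 e^{-\beta}\to 0$ enters.

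The genuine gap is in your excursion bound. The assertion that ``the intermediate rectangles lie spatially between the entry and exit squares'' is false, and no case analysis will rescue it. From a rectangle configuration in $\mc L_{\mb x}$ the process can jump to $\mc L_{\mb x\pm e_i}$ (see the neighborhood structure in the proof of Lemma~\ref{lc02} and Figure~\ref{fig7}), and by Lemma~\ref{s14} it does so with probability at least $1-c_0/n$, i.e.\ close to~$1$. Hence an excursion that begins at $\eta^{\mb 0}$ and ends at $\eta^{\mb y}$ typically wanders among rectangles $\mc L_{\mb z}$ with $|\mb z|$ unrelated to $|\mb y|$; the maximal displacement of $\mt M$ during the excursion is not controlled by $C_0+|\mb y-\mb x|$, and your displayed inequality fails. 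What is needed instead is a direct bound on $\mb P^\zeta_{\eta^{\mb 0}}\big[\sup_{0\le s\le H_\Gamma}\Vert\mt M(\zeta(s))\Vert\ge\epsilon\ell\big]$, summed over the $O(e^{-2\beta}\theta_\beta\ell^2)$ excursions in $[0,t\ell^2\theta_\beta]$. The paper gets this by reusing the exponential-decay machinery from the proof of Theorem~\ref{s10}: the stopping times $\tau_j$ in Lemma~\ref{lc02} are hitting times of the sets $\Pi_j$, so that argument already bounds the probability that an excursion \emph{reaches} distance $k$ (not merely ends there) by $\gamma^{(k-4)/3}\le e^{-c_0 k/n}$, and this suffices under hypothesis~\eqref{c07}.
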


As in Theorem \ref{s10}, the limiting Brownian motion evolves on the
torus $[-1,1)^2$ if $\ell=L$ and on $\bb R^2$ if $\ell\ll L$.

\textit{Dynamics of the trace process $\xi(t)$}. To examine the
evolution of the trace of $\eta_t$ on the set $\Gamma$ of ground
states, we consider the trace of $\eta_t$ on the larger space $\Xi_{1}
= \Gamma\cup\Omega^1 \cup\Omega^3$. The set $\Omega^1$, defined in
Section~\ref{sec3}, is formed by configurations whose particles form a
$n\times n$ square in which one particle at the corner of the square
has been displaced and attached to one of the sides of the
square. Figure~\ref{fig2} shows configurations in $\Omega^1$. The set
$\Omega^3$ contains all configurations whose particles form a $(n\pm
1) \times(n\mp1)$ rectangle with an extra particle attached to one
of the sides. Figure~\ref{fig4} shows configurations in~$\Omega^3$.

\begin{figure}

\includegraphics{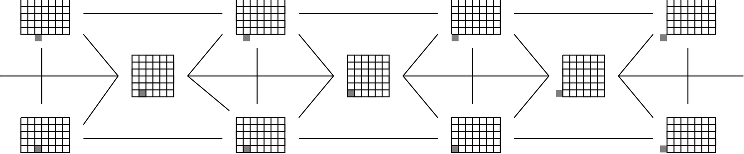}

\caption{The dynamics of the trace of $\eta_t$ on $\Gamma
\cup\Omega^1 \cup\Omega^3$.}
\label{fig9}
\end{figure}

Figure~\ref{fig9} illustrates the dynamics of the trace of $\eta_t$ on
$\Xi_{1}$ in the case $n=6$, denoted by $\zeta_1(t)$. A $1\times1$
square has been placed at each vertex occupied by a particle and the
gray square indicates the origin. The $n\times n$ squares represent
the set of configurations formed by a ground state, say $\eta^{\mb
x}$, as well as the $4(4n-2)$ configurations in $\Omega^1$ obtained
from $\eta^{\mb x}$ by displacing a particle at the corner of the
square to one of the sides. There are $16$ different types of such
configurations, as we may choose a particle from $4$ corners and move
it to $4$ sides. Each rectangle represents the set of $4n$
configurations whose particles form a $(n+1)\times(n-1)$ rectangle
with an extra particle attached to one of the sides.

Figure~\ref{fig9} presents a piece of a periodic horizontal band of
the graph which describes the dynamics of $\zeta_1(t)$. To form the
complete graph, one has first to replicate this periodic horizontal
band in the vertical direction to obtain a graph also periodic in the
vertical direction. Denote by $\bb G$ the graph obtained. To complete
the construction, one has to superpose $\bb G$ with the graph $\bb G$
rotated by $90^\circ$. In this rotated graph, the $(n+1)\times(n-1)$
rectangles become $(n-1)\times(n+1)$ rectangles.

Recall that each square or rectangle in Figure~\ref{fig9} represents a
set of configurations. Two sets $\Pi_1$, $\Pi_2$ are joined by a line
if the process $\zeta_1(t)$ may jump from one configuration of $\Pi_1$
to a configuration in $\Pi_2$. For example, $\zeta_1(t)$ jumps from a
square to the rectangle on the right and above the square by sliding
the small squares at the bottom of the square to its right side. In
fact, all jumps occur from such displacements of particle along the
sides.

Figure~\ref{fig9} also illustrates why the trace process $\xi(t)$ may
have long jumps. For this to happen, it is enough that the process
$\zeta_1(t)$ jumps from a square configuration to an upper rectangle
and then move from rectangle to rectangle in the horizontal
direction. The figure indicates that the probability that $\xi(t)$
performs a jump of length $\mb x$ should decay exponentially in $\mb
x$ at a rate independent of $\beta$.

The diagrams in Figures~\ref{fig1a}, \ref{fig9} have been used
extensively in the literature. We refer to \cite{hos1,gos,bhn1,bhs1} in the conservative case.

\begin{remark}[(The asymptotic behavior of $\theta_\beta$)]
\label{sb03} We
believe that the order of magnitude of $\theta_\beta$ is given by
\[
\frac{1}{\mu_\zeta(\eta^{\mb0})} \Cap_\zeta\bigl(\eta^{\mb0}, \Pi\bigr) \sim
\frac{1}{\mu_\beta(\eta^{\mb0})} \Cap\bigl(\eta^{\mb0}, \Pi\bigr).
\]
In this formula, $\zeta(t)$ is the trace process introduced in Section~\ref{sec2}, $\mu_\zeta$ its stationary state, $\Cap_\zeta$ the
associated capacity and $\Pi$ the union of all sets joined by a line
to $\eta^{\mb0}$ in the graph partially drawn in Figure~\ref{fig9}. If this insight is correct, by the computation presented
at the end of Section~\ref{sec4} the order of magnitude of
$\theta_\beta$ lies between $n e^{2\beta}$ and $n^2
e^{2\beta}$.

By definition \eqref{16} and by formula \eqref{38},
%
\begin{equation}
\label{b05} \theta_\beta^{-1} = \lambda_\zeta
\bigl(\eta^{\mb0}\bigr) \sum_{\mb x\in\bb T_L} \Vert\mb x
\Vert^2 \mb P_{\eta^{\mb0}} \bigl[H^+_\Gamma=
H_{\eta^{\mb x}}\bigr],
\end{equation}
where $\lambda_\zeta(\eta^{\mb0})$ represents the holding rate at
$\eta^{\mb0}$ of the trace process $\zeta(t)$, and $H_\Pi$,
$H^+_{\Pi}$ the hitting time of $\Pi$ and the return time to $\Pi$,
respectively. All these notions are defined in Section~\ref{sec2}. It
follows from the previous equation and from the definition of the
capacity that
\[
\theta_\beta^{-1} \ge \lambda_\zeta\bigl(
\eta^{\mb0}\bigr) \mb P_{\eta^{\mb0}} \bigl[H^+_\Gamma<
H^+_{\eta^{\mb0}}\bigr] = \frac{1}{\mu_\zeta(\eta^{\mb0})} \Cap_\zeta\bigl(
\eta^{\mb0}, \Gamma\setminus\eta^{\mb0}\bigr).
\]
By Lemma \ref{s11}, this expression is bounded below by $c_0
e^{-2\beta} n^{-2}$, which is the upper bound presented in \eqref{39}.
On the other hand, $\Cap_\zeta(\eta^{\mb0}, \Gamma\setminus
\eta^{\mb0})$ should be of the order of $\Cap_\zeta(\eta^{\mb0},
\Pi)$ which explains the first half of our claim.

To obtain an upper bound for $\theta_\beta^{-1}$, note that by
\eqref{b05},
\[
\theta_\beta^{-1} \le \lambda_\zeta\bigl(
\eta^{\mb0}\bigr) \mb P_{\eta^{\mb0}} \bigl[H_\Pi<
H^+_{\eta^{\mb0}}\bigr] \sum_{\mb x\in\bb T_L} \Vert\mb x
\Vert^2 \max_{\eta\in\Pi} \mb P_{\eta}
[H_\Gamma= H_{\eta^{\mb x}}].
\]
We have argued right before this remark that $\mb P^\zeta_{\eta}
[H^+_\Gamma= H_{\eta^{\mb x}}]$ decreases exponentially fast in
$\Vert\mb x\Vert$, at a rate independent of $\beta$. In particular,
the previous sum should be bounded by a finite constant independent of
$\beta$. On the other hand, by the definition of the capacity
\[
\lambda_\zeta\bigl(\eta^{\mb0}\bigr) \mb P_{\eta^{\mb0}}
\bigl[H_\Pi< H^+_{\eta^{\mb0}}\bigr] = \frac{1}{\mu_\zeta(\eta^{\mb0})}
\Cap_\zeta\bigl(\eta^{\mb0}, \Pi\bigr) \cdot
\]
By the proof of Lemma \ref{s11}, this expression is bounded above by
$C_0 n^{-1} e ^{-2\beta}$.

The exact asymptotic behavior of $\theta_\beta$ requires sharp
estimates of the jump rates of the process described in Figure~\ref{fig9}. By symmetry, there are only few different rates. To
estimate these rates, one has to examine the dynamics in which
particles slide along the sides of the square. This process, denoted
by $\zeta(t)$ in Section~\ref{sec2}, is a reversible, symmetric Markov
process on a seven-dimensional graph in which each vertex has degree
less than or equal to eight. Such estimates are certainly not easy,
but are conceivable. In possession of these estimates, one still has to
compute the probability that the process described in Figure~\ref{fig9} starting from $\eta^{\mb0}$ return to $\Gamma$ at
$\eta^{\mb x}$.
\end{remark}

\begin{remark}[(The hypotheses and the strategy)]
\label{sb01}
Condition \eqref{c06} has to be understood as follows. $\ell^2
\theta_\beta$ is the time scale. Each time the process visits a square
(a configuration in $\Gamma$) it remains there an exponential time
whose mean is of order $e^{2\beta}$. Therefore, $\ell^2 \theta_\beta
e^{-2\beta}$ represents the number of excursions between two
consecutive visits to $\Gamma$. In condition \eqref{c06}, one $\ell$
has been replaced by $L$ in the argument due to our incapacity to
estimate sharply the hitting probabilities \eqref{b05}.

The main idea of the article consists in taking a temperature low
enough, with respect to $n$ and $L$, to be allowed to discard all
jumps of lower order in $\beta$. For example, a square configuration
has $8$ jumps whose rates are $e^{-2\beta}$ and $4(n-2)$ jumps whose
rates are $e^{-3\beta}$. Hence, if $\ell^2 \theta_\beta e^{-2\beta}
ne^{-\beta}$ vanishes as $\beta\uparrow\infty$, a particle which is
not at the corner of a square jumps in a time interval $[0, t \ell^2
\theta_\beta]$, $t>0$, with a probability converging to $0$.

If we discard such jumps, from a square the process $\eta(t)$ can only
reach a configuration in which a particle at the corner of the square
has detached itself from the square. At this point, the detached
particle performs a rate one symmetric random walk on $\bb T_L$ until
it reaches a side of this square. While this particle moves, other
particles may also jump. There are a few rate $e^{-\beta}$ and rate
$e^{-2\beta}$ jumps and $O(n)$ rate $e^{-3\beta}$ jumps. In order to
apply the approach presented here, we need to guarantee that the
detached particle returns to the square before any another particle
moves. We have seen that a particle in the square jumps at rate at
most $e^{-\beta}$ while the detached particle returns to the square in
a time of order $L^2$. Hence, the probability that a lower order jump
occurs before the detached particle returns should be of order $L^2
e^{-\beta}$. We are only able to estimate this probability, in
Proposition \ref{s16}, by $L e^{-\beta/2}$. Thus, as in the previous
paragraph, to guarantee that no lower order jump occurs in the time
scale in which the center of mass evolves we need to impose that
$\ell^2 \theta_\beta e^{-2\beta} Le^{-\beta/2}$ vanishes. This
explains the factor $L e^{-\beta/2}$ appearing in the error term $\mb
e(\beta)$ introduced in \eqref{c13}.

Proceeding in this way, we face many cases presented in Sections~\ref{sec3}, \ref{sec2} and \ref{sec6}. The error term $\mb e(\beta)$
summarizes all the constraints, it represents the bound obtained in
Proposition \ref{lc01} for the probability that a lower order jump
occurs before the process returns to a square configuration when it
starts from a square configuration. This estimate is not sharp and
the conditions on the growth of $L$ and $n$ can certainly be improved.

Condition \eqref{c07} is spurious and comes from the fact that we are
not able to estimate correctly the hitting probabilities appearing in
\eqref{b05}. It can certainly be suppressed, but its removal depends
on the estimation of hitting probabilities of a seven dimensional
random walk.

Finally, note that in Theorems \ref{s10} and \ref{sc01}, $n$ has to be
large, but does not need to increase with $\beta$.
\end{remark}

\begin{remark}
\label{sb02}
The approach presented in this article can be applied to the case in
which the total number of particles is $n(n+k)$, for some integer $k$
independent of $\beta$, and the particles of the initial configuration
form a $n\times(n+k)$ rectangle. The energy landscape becomes more
complex for $k$ large and computations harder. For example, for $k=2$,
a $(n+1)\times(n+1)$ square without a particle at a corner is a
ground state.
\end{remark}

Since its origins, the Ising model has been known to properly represent
the condensation phenomena in two-dimensional systems formed by the
adsorption of gases on the surfaces of crystals \cite{p1,o1}. There
is a huge literature on this subject. In this perspective, the
present work can be seen as an investigation of the evolution of the
condensate.

The metastable behavior of the Ising lattice gas evolving under the
noncon\-serv\-a\-tive Glauber dynamics at very low temperature has
been examined 20 years ago and is well understood \cite{ov1}. In
contrast, there are not many results on the asymptotic behavior of the
Ising lattice gas under the conservative Kawasaki dynamics at very low
temperature.

This problem has been examined in two situations. Den Hollander et
al. \cite{hos1} and Gaudilli\`ere et al. \cite{gos} described the
critical droplet, the nucleation time and the typical trajectory
followed by the process during the transition from a metastable set to
the stable set in a two-dimensional Ising lattice gas evolving under
the Kawasaki dynamics at very low temperature and very low density in
a finite square in which particles are created and destroyed at the
boundary. This result has been extended to the anisotropic case by
Nardi et al. \cite{nos1} and to three dimensions by den Hollander et
al. \cite{hnos1}. Bovier et al. \cite{bhn1} presented the detailed
geometry of the set of critical droplets and provided sharp estimates
for the expectation of the nucleation time for this model in dimension
two and three.

More recently, Gaudilli\`ere et al. \cite{ghnos} proved that the
dynamics of particles evolving according to the Kawasaki dynamics at
very low temperature and very low density in a two-dimensional torus,
whose length increases as the temperature decreases, can be
approximated by the evolution of independent particles. Bovier et al.
\cite{bhs1} obtain sharp estimates for the expectation of the
nucleation time for this model where the length of the square
increases as the temperature vanishes.

Finally, Funaki \cite{f1,f2} examined, in the zero-temperature limit,
the motion of a rigid body formed by a system of interacting Brownian
particles with a pairwise potential in $\bb R^d$, and the nucleation
in the one-dimensional case.

The article is divided as follows. In the next section, we introduce a
subset $\Xi$ of $\Omega_{L,K}$. In Sections~\ref{sec2} and \ref{sec6},
we obtain sharp estimates for the jump rates of $\zeta(t)$, the trace
of $\eta(t)$ on $\Xi$, and we introduce a process $\widehat{\zeta}(t)$
whose rate jumps are close to the ones of $\zeta(t)$. In Section~\ref{sec9}, we couple $\zeta(t)$ and $\widehat{\zeta}(t)$ and we obtain
an estimate on the time these processes remain coupled. In Section~\ref{sec4}, we prove Theorem \ref{s10}, in Section~\ref{sec7}, Theorem
\ref{s18}, and in Section~\ref{sec08}, Theorem \ref{sc01}. We conclude
the article in Section~\ref{sec8} with estimates on the time-scale
$\theta_\beta$.

\section{The subset of configurations \texorpdfstring{$\Xi$}{$Xi$}}
\label{sec3}

We introduce in this section a subset $\Xi$ of $\bb H_{01}$, and we
estimate, in the next section, the jump rates of the trace of
$\eta(t)$ on~$\Xi$.

\subsection{The configurations \texorpdfstring{$\eta^{i,j}_{\mathbf{x}}$}{$eta^{i,j}_{\mathbf{x}}$}}
\label{ss1}

For a subset $B$ of $\bb T_L$, denote by $\partial_- B$,
$\partial_+ B$ the inner, outer boundary of $B$,
respectively. These are the set of sites which are at distance one
from $B^c$, $B$, respectively,
\begin{eqnarray*}
 \partial_- B& =& \bigl\{\mb x \in B \dvtx \exists \mb y \in B^c, |
\mb y - \mb x|=1 \bigr\},
\\
 \partial_+ B &=& \{\mb x \in\bb T_L\setminus B \dvtx \exists \mb y
\in B, |\mb y - \mb x|=1 \},
\end{eqnarray*}
where $| \cdot|$ stands for the sum norm, $\mb x = (x_1, x_2)$,
$|\mb x|= |x_1| + |x_2|$.

Let $Q^{i} = Q \setminus\{\mb w_i \}$, $0\le i\le3$, where
\begin{eqnarray*}
\mb w_0& =& \mb w = (0,0),\qquad \mb w_1 = (n-1,0), \\
\mb
w_{2} &=& (n-1,n-1), \qquad\mb w_3 = (0,n-1)
\end{eqnarray*}
are the corners of the square $Q$. For $\mb x\in\bb T_L$, $0\le i\le
3$, let $\mb x_i = \mb x + \mb w_i$, $Q_{\mb x}^{i} = \mb x + Q^{i}$.
We refer to the sets $Q_{\mb x}^{i}$ as quasi-squares, a square with a
corner missing.

Let $e_1=(1,0)$, $e_2=(0,1)$ be the canonical basis of $\bb R^2$, and
denote by $\partial_j Q_{\mb x}^{i}$, $0\le j\le3$, the $j$th outer
boundary of $Q_{\mb x}^{i}$:
\begin{eqnarray*}
 \partial_j Q_{\mb x}^{i}& =&\bigl\{\mb z\in
\partial_+ Q_{\mb x}^{i} \dvtx \exists \mb y\in
Q_{\mb x}^{i}; \mb y -\mb z = (1-j) e_2\bigr\},\qquad
j=0,2,
\\
 \partial_j Q_{\mb x}^{i} &=&\bigl\{\mb z\in
\partial_+ Q_{\mb
x}^{i} \dvtx \exists \mb y\in
Q_{\mb x}^{i}; \mb y -\mb z = (j-2) e_1\bigr\},\qquad
j=1,3.
\end{eqnarray*}
Let $Q^{i,j}_{\mb x} = \partial_j Q_{\mb x}^{i} \setminus Q_{\mb x}$,
$0\le i,j\le3$, $\mb x\in\bb T_L$, let $\mc E^{i,j}_{\mb x}$ be the
set of configurations in which all sites of the set $Q^{i}_{\mb
x}$ are occupied with an extra particle at some location of
$Q^{i,j}_{\mb x}$ and let $\Omega^1 = \Omega^1_{L,K}$ be the set of
all such configurations
\[
\mc E^{i,j}_{\mb x} = \bigl\{\sigma^{\mb x_i, \mb z}
\eta^{\mb x} \dvtx \mb z\in Q^{i,j}_{\mb x}\bigr\},\qquad
\Omega^1 = \bigcup_{\mb x\in\bb T_L} \bigcup
_{i,j=0}^3 \mc E^{i,j}_{\mb x}.
\]

We proved in \cite{bl5} that for any configuration $\xi\in\mc
E^{i,j}_{\mb x}$, the triple $(\mc E^{i,j}_{\mb x}, \mc E^{i,j}_{\mb
x}\cup\Delta_1, \xi)$ is a valley of depth $e^\beta$ in the sense
of \cite{bl2}, Definition 2.1. To select a representative
configuration of the well $\mc E^{i,j}_{\mb x}$, denote by $\hat{\mb
w}_j$, $0\le j\le3$, the point in the inner boundary of $Q$ whose
distance to $\mb w_j$ and to $\mb w_{j+1}$ is equal to $(n-1)/2$ if
$n$ is odd, and whose distance to $\mb w_j$, $\mb w_{j+1}$ is equal to
$(n/2) -1$, $n/2$, respectively, if $n$ is even. Let $\mb w_j^*$ the
site on the outer boundary of $Q$ at distance one from $\hat{\mb
w}_j$, and let $\eta^{i,j}_{\mb x}$, $\mb x \in\bb T_L$, be the
configuration $\sigma^{\mb x + \mb w_i,\mb x + \mb w^*_j}\eta^{\mb
x}$. Denote by $\widehat\Omega^1$ the space of such configurations
\[
\widehat\Omega^1_{\mb x} = \bigl\{ \eta^{i,j}_{\mb x}
\dvtx 0\le i,j\le3 \bigr\}, \qquad\widehat\Omega^1 = \bigcup
_{\mb x \in\bb T_L} \widehat\Omega^1_{\mb x}.
\]

\begin{figure}

\includegraphics{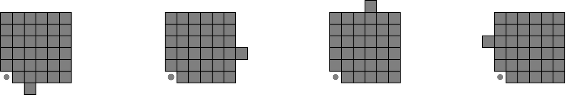}

\caption{The configurations $\eta^{0,0}_{\mb x}$, $\eta^{0,1}_{\mb
x}$, $\eta^{0,2}_{\mb x}$ and $\eta^{0,3}_{\mb x}$ for $n=6$. The
gray dot indicates the site $\mb x$. We placed a square
$[-1/2,1/2)^2$ around each particle.}
\label{fig2}
\end{figure}

\subsection{The configurations \texorpdfstring{$\eta_{\mathbf{x}}^{\mathfrak{a},(\mathbf{k},\bolds{\ell})}$}{$eta_{x}^{a,(k,ell)}$}}
\label{ss2}

Let $R^{\mf l}$, $R^{\mf s}$ be the rectangles $R^{\mf l} =\{1,\ldots,\break
n-1\}\times\{1, \ldots, n-2\}$, $R^{\mf s} = \{1, \ldots, n-2\} \times
\{1,\ldots, n-1\}$, where $\mf l$ stands for lying and $\mf s$ for
standing. Let $n_0^{\mf s} = n_2^{\mf s} = n-2$, $n_1^{\mf s} =
n_3^{\mf s} = n-1$ be the length of the sides of the standing
rectangle $R^{\mf s}$. Similarly, denote by $n_i^{\mf l}$, $0\le i\le
3$, the length of the sides of the lying rectangle $R^{\mf l}$:
$n_i^{\mf l} = n_{i+1}^{\mf s}$, where the sum over the index $i$ is
performed modulo $4$.

Denote by $\bb I_{\mf a}$, $\mf a \in\{\mf s,\mf l\}$, the set of
pairs $(\mb k, \bs\ell) = (k_0,\ell_0; k_1,\ell_1; k_2,\ell_2;
k_3,\ell_3)$ such that:
\begin{itemize}
\item$0\le k_i \le\ell_i \le n_i^{\mf a}$,
\item if $k_j=0$, then $\ell_{j-1}= n_{j-1}^{\mf a}$.
\end{itemize}
For $(\mb k,\bs\ell) \in\bb I_{\mf a}$, $\mf a \in\{ \mf s, \mf
l\}$, let $R^{\mf l}(\mb k,\bs\ell)$, $R^{\mf s}(\mb k,\bs\ell)$ be
the sets
\begin{eqnarray*}
R^{\mf l} (\mb k,\bs\ell) & =& R^{\mf l} \cup \bigl\{(a,0) \dvtx
k_0\le a\le\ell_0\bigr\} \cup \bigl\{(n,b) \dvtx
k_1\le b\le\ell_1\bigr\}
\\
&&{} \cup \bigl\{(n-a,n-1) \dvtx k_2\le a\le\ell_2\bigr\}
\cup \bigl\{(0,n-1-b) \dvtx k_3\le b\le\ell_3\bigr\},
\\
R^{\mf s} (\mb k,\bs\ell) & =& R^{\mf s} \cup \bigl\{(a,0) \dvtx
k_0\le a\le\ell_0\bigr\} \cup \bigl\{(n-1,b) \dvtx
k_1\le b\le\ell_1\bigr\}
\\
&&{} \cup \bigl\{(n-1-a,n) \dvtx k_2\le a\le\ell_2\bigr\}
\cup \bigl\{(0,n-b) \dvtx k_3\le b\le\ell_3\bigr\}.
\end{eqnarray*}
Note that a hole between particles on a side of a rectangle is not
allowed in the sets $R^{\mf a} (\mb k,\bs\ell)$, $R^{\mf s} (\mb
k,\bs\ell)$.

Denote by $I_{\mf a}$, $\mf a \in\{ \mf s, \mf l\}$, the set of pairs
$(\mb k, \bs\ell)\in\bb I_{\mf a}$ such that $|R^{\mf a} (\mb k,\bs
\ell)|=n^2$. For $(\mb k,\bs\ell) \in I_{\mf a}$, denote by $M_i(\mb
k,\bs\ell)$ the number of particles attached to the side $i$ of the
rectangle $R^{\mf a} (\mb k,\bs\ell)$:
\[
M_i(\mb k,\bs\ell) = \cases{ \ell_i-k_i+1, &\quad
$\mbox{if $k_{i+1}\ge1,$}$\vspace*{2pt}
\cr
\ell_i-k_i+2,
&\quad $\mbox{if $k_{i+1}=0$}$.}
\]
Clearly, for $(\mb k, \bs\ell)\in I_{\mf a}$, $\sum_{0\le i\le3}
M_i(\mb k,\bs\ell) = 3n-2 + A$, where $A$ is the number of occupied
corners, which are counted twice since they are attached to two sides.

Denote by $I^*_{\mf a} \subset I_{\mf a}$, the set of pairs $(\mb k,
\bs\ell)\in I_{\mf a}$ whose rectangles $R^{\mf a} (\mb k,\bs\ell)$
have at least two particles on each side: $M_i(\mb k,\bs\ell)\ge2$,
$0\le i\le3$. Note that if $(\mb k, \bs\ell)$ belongs to $I^*_{\mf
a}$, for all $\mb x\in R^{\mf a} (\mb k,\bs\ell)$, there exist $\mb
y$, $\mb z \in R^{\mf a} (\mb k,\bs\ell)$, $\mb y\neq \mb z$, with
the property $|\mb x-\mb y|=|\mb x-\mb z|=1$.

\begin{figure}[b]

\includegraphics{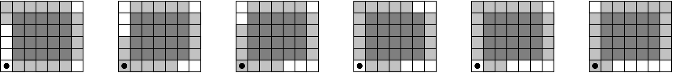}

\caption{Some configurations $\eta_{\mb x}^{\mf l, (\mb k,\bs\ell)}$
for $n=6$. The first one corresponds to the vector $(\mb k,\bs
\ell)= ((1,5); (1,4);(1,6); (0,1))$ and the last one to the vector
$(\mb k,\bs\ell)= ((0,1); (1,5); (0,5);\break (1,5))$. The inner gray
rectangle represents the set $\mb x + R^{\mf l}$ and the black dot
the site~$\mb x$.}\label{fig44}
\end{figure}

For $(\mb k,\bs\ell) \in I_{\mf a}$, $\mf a \in\{ \mf s, \mf l\}$,
$\mb x\in\bb T_L$, let $R^{\mf a}_{\mb x} (\mb k,\bs\ell) = \mb x
+ R^{\mf a} (\mb k,\bs\ell)$, and let $\eta_{\mb x}^{\mf a, (\mb
k,\bs\ell)}$ represent the configurations defined by
\[
\mbox{$\eta_{\mb x}^{\mf a, (\mb k,\bs\ell)} (\mb y) = 1$\qquad if and only if $\mb y \in
R^{\mf a}_{\mb x} (\mb k,\bs\ell)$}.
\]
Figure \ref{fig44}
presents examples of configurations $\eta_{\mb x}^{\mf l, (\mb k,\bs \ell)}$.
The configurations $\eta_{\mb x}^{\mf a, (\mb k,\bs\ell)}$, $(\mb
k,\bs\ell) \in I_{\mf a}$, belong to $\bb H_1$. Moreover,
the configurations $\eta_{\mb x}^{\mf a, (\mb k,\bs\ell)}$, $(\mb
k,\bs\ell) \in I_{\mf a} \setminus I^*_{\mf a}$, belong to $\Omega^1$
or form a $(n-1)\times(n+1)$ rectangle of particles with one extra
particle attached to a side of length $n+1$. Let $\Omega^2 =
\Omega^2_{L,K}$, be the set of configurations associated to the pairs
$(\mb k,\bs\ell)$ in $I^*_{\mf a}$:
\[
\Omega^2_{\mb x} = \bigl\{ \eta_{\mb x}^{\mf a, (\mb k,\bs\ell)}
\dvtx \mf a \in\{ \mf s, \mf l\}, (\mb k,\bs\ell) \in I^*_{\mf a} \bigr\},\qquad
\Omega^2 = \bigcup_{\mb x\in\bb T_L}
\Omega^2_{\mb x}.
\]

\subsection{The configurations \texorpdfstring{$\zeta^{\mathbf{a},j}_{\mathbf{x}}$}{$zeta^{\mathbf{a},j}_{\mathbf{x}}$}}
\label{ss3}

Let $T^{\mf l}$, $T^{\mf s}$ be the rectangles $T^{\mf l} =\{0,\ldots,
n\}\times\{0, \ldots, n-2\}$, $T^{\mf s} = \{0, \ldots, n-2\} \times
\{0,\ldots, n\}$, where $\mf l$ stands for lying and $\mf s$ for
standing. Denote by $T^{\mf a}_{\mb x}$, $\mf a \in\{ \mf s, \mf l\}$,
$\mb x\in\bb T_L$, the rectangle $T^{\mf a}$ translated by $\mb x$:
$T^{\mf a}_{\mb x} = {\mb x} + T^{\mf a}$, and by $\eta_{\mf a}^{\mb
x}$ the configuration in which all sites of $T^{\mf a}_{\mb x}$ are
occupied. Note that $\eta_{\mf a}^{\mb x}$ belongs to $\Omega_{L,K-1}$
and not to $\Omega_{L,K}$.

For $\mf a \in\{ \mf s, \mf l\}$, $\mb x\in\bb T_L$, $\mb z\in
\partial_+ T^{\mf a}_{\mb x}$, denote by $\eta_{\mf a}^{\mb x, \mb z}$
the configuration in which all sites of the rectangle $T^{\mf a}_{\mb
x}$ and the site $\mb z$ are occupied: $\eta_{\mf a}^{\mb x, \mb z}
= \eta_{\mf a}^{\mb x} + \mf d_z$, where $\mf d_y$, $y\in\bb T_L$,
represents the configuration with a unique particle at $y$ and
summation of configurations is performed componentwise.

Denote by $\partial_j T^{\mf a}_{\mb x}$, $0\le j\le3$, the $j$th
outer boundary of $T^{\mf a}_{\mb x}$:
\begin{eqnarray*}
 \partial_jT^{\mf a}_{\mb x} &=& \bigl\{\mb z\in
\partial_+ T^{\mf a}_{\mb x} \dvtx \exists \mb y\in
T^{\mf a}_{\mb x}; \mb y -\mb z = (1-j) e_2\bigr\},\qquad
j=0,2,
\\
\partial_j T^{\mf a}_{\mb x} &= &\bigl\{\mb z\in
\partial_+ T^{\mf a}_{\mb x} \dvtx \exists \mb y\in
T^{\mf a}_{\mb x}; \mb y -\mb z = (j-2) e_1\bigr\},\qquad
j=1,3.
\end{eqnarray*}
Let $\mc E^{\mf a,j}_{\mb x}$ be the set of configurations in which
all sites of the set $T^{\mf a}_{\mb x}$ are occupied with an extra
particle at some location of $\partial_jT^{\mf a}_{\mb x}$ and let
$\Omega^3= \Omega^3_{L,K}$ be the set of all such configurations:
\[
\mc E^{\mf a,j}_{\mb x} = \bigl\{ \eta_{\mf a}^{\mb x, \mb z}
\dvtx \mb z\in\partial_j T^{\mf a}_{\mb x}\bigr\},\qquad
\Omega^3 = \bigcup_{\mb x\in\bb T_L} \bigcup
_{\mf a =
\mf s, \mf l} \bigcup_{j=0}^3
\mc E^{\mf a,j}_{\mb x}.
\]

We proved in \cite{bl5} that for any configuration $\xi\in\mc E^{\mf
a,j}_{\mb x}$, the triple $(\mc E^{\mf a,j}_{\mb x}, \mc E^{\mf
a,j}_{\mb x}\cup\Delta_1, \xi)$ is a valley of depth $e^\beta$. We
select a representative of the well $\mc E^{\mf a,j}_{\mb x}$.

\begin{figure}

\includegraphics{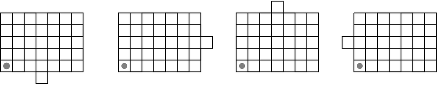}

\caption{The configurations $\zeta^{\mf l,i}_{\mb x}$, $0\le i\le3$,
for $n=6$. The gray dot represents the site $\mb x$.}
\label{fig4}
\end{figure}

Fix $\mf a = \mf s$, $\mf l$, and denote by ${\mb w}^*_{\mf a, j}$,
$0\le j\le3$, the mid-points of the outer boundary of $T^{\mf
a}$. Hence, if $n$ is even ${\mb w}^*_{\mf l, 0}= (n/2, -1)$, ${\mb
w}^*_{\mf l, 1} = (n+1, (n-2)/2)$, ${\mb w}^*_{\mf l, 2} = (n/2,
n-1)$, ${\mb w}^*_{\mf l, 3} = (-1, (n-2)/2)$, and if $n$ is odd ${\mb
w}^*_{\mf l, 0} = ((n-1)/2, -1)$, ${\mb w}^*_{\mf l, 1} = (n+1,
(n-3)/2)$, ${\mb w}^*_{\mf l, 2} = ((n+1)/2, n-1)$, ${\mb w}^*_{\mf l,
3} = (-1, (n-1)/2)$, with a similar definition when $\mf s$ replaces
$\mf l$. Let $\zeta^{\mf a,j}_{\mb x}$, $\mb x \in\bb T_L$, $\mf
a\in\{\mf s, \mf l\}$, $0\le j\le3$, be the configuration $\eta^{\mb
x}_{\mf a} + \mf d_{\mb x + {\mb w}^*_{\mf a, j}}$ and denote by
$\widehat\Omega^3$ the space of such configurations:
\[
\widehat\Omega^3_{\mb x} = \bigl\{ \zeta^{\mf a,j}_{\mb x}
\dvtx \mf a \in\{ \mf s, \mf l\}, 0\le j\le3 \bigr\},\qquad \widehat
\Omega^3 = \bigcup_{\mb x \in\bb T_L} \widehat
\Omega^3_{\mb x}.
\]

\subsection{The configurations \texorpdfstring{$\zeta_{\mathbf{x}}^{\mathbf{a},(\mathbf{k},\bolds{\ell})}$}{$zeta_{x}^{a,(k,ell)}$}}
\label{ss4}

Let $R^{2,\mf l}$, $R^{2,\mf s}$ be the rectangles $R^{2,\mf l}
=\{1,\ldots, n\}\times\{1, \ldots, n-3\}$, $R^{2,\mf s} = \{1, \ldots,
n-3\} \times\{1,\ldots, n\}$. Let $n_0^{2,\mf s} = n_2^{2,\mf s} =
n-3$, $n_1^{2,\mf s} = n_3^{2,\mf s} = n$ be the length of the sides
of the standing rectangle $R^{2,\mf s}$. Similarly, denote by
$n_i^{2,\mf l}$, $0\le i\le3$, the length of the sides of the lying
rectangle $R^{2,\mf l}$: $n_i^{2,\mf l} = n_{i+1}^{2,\mf s}$, where
the sum over the index $i$ is performed modulo $4$.

Denote by $\bb I_{2,\mf a}$, $\mf a \in\{\mf s,\mf l\}$, the set of
pairs $(\mb k, \bs\ell)$ such that:
\begin{itemize}
\item$0\le k_i \le\ell_i \le n_i^{2,\mf a}$,
\item if $k_j=0$, then $\ell_{j-1}= n_{j-1}^{2,\mf a}$.
\end{itemize}
For $(\mb k,\bs\ell) \in\bb I_{2,\mf a}$, $\mf a \in\{ \mf s, \mf
l\}$, let $R^{2,\mf l}(\mb k,\bs\ell)$, $R^{2,\mf s}(\mb k,\bs\ell)$
be the sets
\begin{eqnarray*}
R^{2,\mf l} (\mb k,\bs\ell) & =& R^{2,\mf l} \cup \bigl\{(a,0) \dvtx
k_0\le a\le\ell_0\bigr\} \cup \bigl\{(n+1,b) \dvtx
k_1\le b\le\ell_1\bigr\}
\\
&&{} \cup \bigl\{(n+1-a,n-2) \dvtx k_2\le a\le\ell_2\bigr\}
\cup \bigl\{(0,n-2-b) \dvtx k_3\le b\le\ell_3\bigr\},
\\
R^{2,\mf s} (\mb k,\bs\ell) & =& R^{2,\mf s} \cup \bigl\{(a,0) \dvtx
k_0\le a\le\ell_0\bigr\} \cup \bigl\{(n-2,b) \dvtx
k_1\le b\le\ell_1\bigr\}
\\
& &{}\cup \bigl\{(n-2-a,n+1) \dvtx k_2\le a\le\ell_2\bigr\}
\cup \bigl\{(0,n+1-b) \dvtx k_3\le b\le\ell_3\bigr\}.
\end{eqnarray*}

Denote by $I_{2,\mf a}$, $\mf a \in\{ \mf s, \mf l\}$, the set of
pairs $(\mb k, \bs\ell)\in\bb I_{2,\mf a}$ such that $|R^{2,\mf a}
(\mb k,\bs\ell)|=n^2$. For $(\mb k,\bs\ell) \in I_{2,\mf a}$, denote
by $M^{2,\mf a}_i(\mb k,\bs\ell)$ the number of particles attached to
the side $i$ of the rectangle $R^{2,\mf a} (\mb k,\bs\ell)$:
\[
M^{2,\mf a}_i(\mb k,\bs\ell) = \cases{ \ell_i-k_i+1,
& \quad $\mbox{if $k_{i+1}\ge1,$}$\vspace*{2pt}
\cr
\ell_i-k_i+2,
&\quad $\mbox{if $k_{i+1}=0$}$.}
\]
Clearly, for $(\mb k, \bs\ell)\in I_{2,\mf a}$, $\sum_{0\le i\le3}
M^{2,\mf a}_i(\mb k,\bs\ell) = 3n + A$, where $A$ is the number of
occupied corners, which are counted twice since they are attached to
two sides.

Denote by $I^*_{2,\mf a} \subset I_{2,\mf a}$, the set of pairs $(\mb
k, \bs\ell)\in I_{2,\mf a}$ whose rectangles $R^{2,\mf a} (\mb k,\bs
\ell)$ have at least two particles on each side: $M^{2,\mf a}_i(\mb
k,\bs\ell)\ge2$, $0\le i\le3$. Note that if $(\mb k, \bs\ell)$
belongs to $I^*_{2,\mf a}$, for all $\mb x\in R^{2,\mf a} (\mb k,\bs
\ell)$, there exist $\mb y$, $\mb z \in R^{2,\mf a} (\mb k,\bs\ell)$,
$\mb y\neq \mb z$, with the property $|\mb x-\mb y|=|\mb x-\mb
z|=1$.

For $(\mb k,\bs\ell) \in I_{2,\mf a}$, $\mf a \in\{ \mf s, \mf l\}$,
$\mb x\in\bb T_L$, let $R^{2,\mf a}_{\mb x} (\mb k,\bs\ell) = \mb x
+ R^{2,\mf a} (\mb k,\bs\ell)$, and let $\zeta_{\mb x}^{\mf a, (\mb
k,\bs\ell)}$ represent the configurations defined by
\[
\mbox{$\zeta_{\mb x}^{\mf a, (\mb k,\bs\ell)} (\mb y) = 1$\qquad if and only if $\mb y
\in R_{\mb x}^{2,\mf a}(\mb k,\bs\ell)$}.
\]
Figure
\ref{fig6} presents examples of configurations $\zeta_{\mb x}^{\mf l,
(\mb k,\bs \ell)}$.
The configurations $\zeta_{\mb x}^{\mf a, (\mb k,\bs\ell)}$, $(\mb
k,\bs\ell) \in I_{2,\mf a}$, have at least four particles attached to
the longer side, and the configurations $\zeta_{\mb x}^{\mf a, (\mb
k,\bs\ell)}$, $(\mb k,\bs\ell) \in I_{2,\mf a} \setminus
I^*_{2,\mf a}$, belong to $\Omega^3$, forming a $(n-1)\times(n+1)$
rectangle of particles with one extra particle attached to a side of
length $n-1$. Let $\Omega^4 = \Omega^4_{L,K}$, be the set of
configurations associated to the pairs $(\mb k,\bs\ell)$ in
$I^*_{2,\mf a}$:
\[
\Omega^4_{\mb x} = \bigl\{ \zeta_{\mb x}^{\mf a, (\mb k,\bs\ell)}
\dvtx \mf a \in\{ \mf s, \mf l\}, (\mb k,\bs\ell) \in I^*_{2,\mf a} \bigr\},
\qquad\Omega^4 = \bigcup_{\mb x\in\bb T_L}
\Omega^4_{\mb x}.
\]

\begin{figure}

\includegraphics{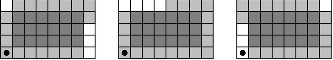}

\caption{Examples of configurations in $\Omega^4_{\mb x}$ for
$n=6$. In general, $3n$ particles (or $n-2$ holes) have to be placed
around the rectangle, respecting the constraints introduced above.
The black dot represents the site $\mb x$.}\label{fig6}
\end{figure}

Let $\Xi= \Xi(L,n)$ be the set of configurations
\[
\Xi = \Gamma \cup \widehat\Omega^1 \cup \Omega^2 \cup
\widehat\Omega^3 \cup \Omega^4.
\]
The set $\Xi$ has less than $C_0 L^2 n^7$ configurations, the main
contributions coming from $\Omega^2$ and $\Omega^4$:
%
\begin{equation}
\label{q1} |\Xi| \le C_0 L^2 n^7.
\end{equation}

\section{The trace of \texorpdfstring{$\eta(t)$}{$eta(t)$} on \texorpdfstring{$\Xi$}{$Xi$}}
\label{sec2}

We define in this section the trace of $\eta(t)$ on $\Xi$, denoted
by $\zeta(t)$, and we introduce a process $\widehat{\zeta}(t)$ which
approximates well the trace process. The parameters $n$ and $L$ are
fixed and $\beta\uparrow\infty$.

Denote by $\zeta(t)$ the trace of the process $\eta(t)$ on $\Xi$. By
Propositions 6.1 and 6.3 in \cite{bl2}, $\zeta(t)$ is a irreducible,
reversible process on $\Xi$, whose stationary measure, denoted by
$\mu_\zeta$, is the measure $\mu_K$ conditioned to $\Xi$:
$\mu_\zeta(\xi) = \mu_K(\xi)/\mu_K(\Xi) =
\mu_\beta(\xi)/\mu_\beta(\Xi)$, $\xi\in\Xi$.

For a subset $\Pi$ of $\Xi$, denote by $H_\Pi$ (resp., $H^+_\Pi$) the hitting
time of $\Pi$ (resp., the return time to $\Pi$):
%
\begin{eqnarray}
\label{b04} H_\Pi&:= &\inf \bigl\{ t > 0 \dvtx \zeta(t) \in\Pi
\bigr\},
\nonumber
\\[-8pt]
\\[-8pt]
\nonumber
 H^+_\Pi &=& \inf\bigl\{ t>0 \dvtx \zeta(t) \in\Pi, \zeta(s) \neq
\zeta(0) \textrm{ for some $0< s < t$}\bigr\}.
\nonumber
\end{eqnarray}

Denote by $R_\zeta(\eta,\xi)$ the jump rates of the trace process
$\zeta(t)$, and by $L_\zeta$ its generator:
\[
(L_\zeta f) (\eta) = \sum_{\xi\in\Xi}
R_\zeta(\eta,\xi) \bigl[f(\xi) - f(\eta)\bigr].
\]

Let $\mb P^\zeta_\xi$, $\xi\in\Xi$, be the probability measure on
the path space $D(\bb R_+, \Xi)$ induced by the Markov process
$\zeta(t)$ starting from $\xi$. Expectation with respect to $\mb
P^\zeta_\xi$ is represented by $\mb E^\zeta_\xi$. For two disjoint
subsets $A$, $B$ of $\Xi$, denote by $\Cap_\zeta(A,B)$ the capacity
between $A$ and $B$ for the $\zeta(t)$ process
\[
\Cap_\zeta(A,B) = \sum_{\eta\in A}
\mu_\zeta(\eta) \lambda_\zeta(\eta) \mb P^\zeta_\eta
\bigl[ H_B < H^+_A \bigr],
\]
where $\lambda_\zeta(\eta)$ stands for the holding rates of the
process $\zeta(t)$, $\lambda_\zeta(\eta) =\break  \sum_{\xi\in\Xi} R_\zeta
(\eta,\xi)$.\vspace*{1pt}

We will introduce below several Markov processes obtained from the
trace process $\zeta(t)$, either by reflecting it on a subset of $\Xi$
or by modifying its jump rates. All processes will be denoted by some
Greek letter with, sometimes, an accent. If the process is represented
by $\chi$, the jump rates, the invariant measure, the holding rates,
and the capacities are denoted by $R_\chi$, $\mu_\chi$,
$\lambda_\chi$ and $\Cap_\chi$, respectively.

\subsection*{The rates $\mathbf{R}(\eta,\xi)$}
The trace process $\zeta(t)$ may jump from any configuration of $\Xi$
to any other configuration. Most of these jumps, however, have very
small rates when $\beta$ is large. The main results of the next section
assert that in the zero-temperature limit the process $\zeta(t)$ is
very well approximated by a simple Markov process denoted by
$\widehat{\zeta}(t)$. We define in this subsection the jump rates $\mb
R (\cdot, \cdot)$ of $\widehat{\zeta}(t)$ and derive in the next
subsection some properties of $\mb R$.

Let $\{\mb x_t \dvtx t\ge0\}$ be the nearest-neighbor, symmetric random
walk on $\bb T_L$ which jumps from a site $\mb x$ to $\mb x \pm e_i$
at rate $1$. Denote by $\bb P^{\mb x}_{\mb y}$, $\mb y\in\bb T_L$,
the probability measure on $D(\bb R_+, \bb T_L)$ induced by $\mb
x_t$ starting from $\mb y$. Denote by $\mf p(\mb x, \mb y, G)$, $\mb
x\in\bb T_L$, $\mb y\in G$, $G\subset\bb T_L$, the probability
that the random walk starting from $\mb x$ reaches $G$ at $\mb y$:
%
\begin{equation}
\label{26} \mf p(\mb y, \mb z, G):= \bb P^{\mb x}_{\mb y} [\mb
x_{H_G} = \mb z ].
\end{equation}
By extension, for a subset $\mb A$ of $\bb T_L$, let $\mf p(\mb x,
\mb A, G) = \sum_{\mb y\in\mb A} \mf p(\mb x, \mb y, G)$. Moreover,
when $G=\partial_+Q$, we omit the set $G$ in the notation
\[
\mf p(\mb x, \mb A):= \mf p(\mb x, \mb A, \partial_+ Q).
\]

Let $\mb y_t = (\mb y^1_t,\mb y^2_t)$ be the continuous-time Markov
chain on $D_n = \{(j,k) \dvtx 0\le j < k\le n-1\} \cup\{(0,0)\}$ which
jumps from a site $\mb y$ to any of its nearest neighbor sites $\mb
z$, $|\mb y -\mb z|=1$, at rate $1$. Let $D^o_n = \{(0,0)\} \cup
\{(j,n-1) \dvtx 0\le j<n-1\}$ and let
%
\begin{equation}
\label{29} \mf q_n = \bb P^{\mb y}_{(0,1)} [
H_{D^o_n} < H_{(0,0)} ],
\end{equation}
where $\bb P^{\mb y}_{(0,1)}$ stands for the distribution of $\mb y_t$
starting from $(0,1)$.

Let $\mb z_t = (\mb z^1_t,\mb z^2_t)$ be the continuous-time Markov
chain on $E_n = \{0,\ldots, n-1\}^2 \cup\{\mf d\}$ which jumps from a
site $\mb z$ to any of its nearest neighbor sites $\mb z'$, $|\mb z'
-\mb z|=1$, at rate $1$, and which jumps from $(1,1)$ (resp., from $\mf
d$) to $\mf d$ [resp., to $(1,1)$] at rate $1$. Let $E^+_n = \{(j,n-1)
\dvtx 0\le j\le n-1\}$, $E^-_n = \{(j,0) \dvtx 1\le j\le n-1\}$, $\partial E_n
= E^+_n \cup E^-_n \cup\{(0,0)\}$ and for $0\le k\le n-1$, let
%
\begin{eqnarray}
\label{30}  \mf r^+_n &=& \bb P^{\mb z}_{(0,1)}
[ H_{\partial E_n} = H_{E^+_n} ],\qquad \mf r^-_n = \bb
P^{\mb z}_{(0,1)} [ H_{\partial E_n} = H_{E^-_n} ],
\nonumber
\\[-8pt]
\\[-8pt]
\nonumber
\mf r^0_n (k) &=& \bb P^{\mb z}_{(k,1)}
[ H_{\partial E_n} = H_{(0,0)} ],\qquad \mf r_n = \mf
r^+_n + \mf r^-_n,
\end{eqnarray}
where $\bb P^{\mb z}_{(k,1)}$ stands for the distribution of $\mb z_t$
starting from $(k,1)$.

We are finally in a position to define $\mb R(\eta^{\mb x},\xi)$.
Let
%
\begin{equation}\qquad
\label{50} \mb R\bigl(\eta^{\mb0}, \eta^{0,0}_{\mb0}
\bigr) = \frac{1 + \mf A_{0,3} + \mf r^-_n + \mf q_n}{
4 + \mf q_n + \mf r_n - \mf A},\qquad \mb R \bigl(\eta^{\mb0},
\eta^{0,1}_{\mb0}\bigr) = \frac{\mf A_{1,2} + \mf r^+_n}{
4 + \mf q_n + \mf r_n - \mf A},
\end{equation}
where
\begin{eqnarray*}
 \mf A &=& \mf p\bigl(\mb w_0 - 2e_2, \{\mb
w_0 - e_2, \mb w_0 - e_1\}
\bigr) + \mf p\bigl(\mb w_0 - e_1 - e_2, \{
\mb w_0 - e_2, \mb w_0 - e_1\}
\bigr),
\\
 \mf A_{0,3} &=& \mf p\bigl(\mb w_0 - 2e_2,
Q^{0,0}_{\mb0} \cup Q^{0,3}_{\mb0}\bigr) + \mf
p\bigl(\mb w_0 - e_1 - e_2,
Q^{0,0}_{\mb0} \cup Q^{0,3}_{\mb0}\bigr),
\\
 \mf A_{1,2} &=& \mf p\bigl(\mb w_0 - 2e_2,
Q^{0,1}_{\mb0} \cup Q^{0,2}_{\mb0}\bigr) + \mf
p\bigl(\mb w_0 - e_1 - e_2,
Q^{0,1}_{\mb0} \cup Q^{0,2}_{\mb0}\bigr).
\end{eqnarray*}

For $\mb x\in\bb T_L$, let
%
\begin{equation}
\label{45} \mc V\bigl(\eta^{\mb x}\bigr) = \bigl\{
\eta^{i,j}_{\mb x} \dvtx 0\le i,j\le3\bigr\},\qquad \mc G_{\mb x}
= \bigl\{\eta^{\mb x}\bigr\}\cup\mc V\bigl(\eta^{\mb x}\bigr).
\end{equation}
We extend $\mb R(\eta^{\mb0}, \cdot)$ to $\mc V(\eta^{\mb0})$ by
symmetry, and we set
\[
\mb R\bigl(\eta^{\mb x}, \eta^{i,j}_{\mb x}\bigr) = \mb R
\bigl(\eta^{\mb0}, \eta^{i,j}_{\mb0}\bigr),\qquad \mb R\bigl(
\eta^{\mb x}, \xi\bigr) = 0,\qquad 0\le i,j\le3, \xi\notin\mc V\bigl(
\eta^{\mb x}\bigr).
\]

\subsection*{The rates $\mathbf{R}(\eta,\xi)$ on $\Xi\setminus\Gamma$}

Denote by $\mc N(\mc E^{i,j}_{\mb x})$, $\mc N$ for neighborhood,
$0\le i,j\le3$, $\mb x\in\bb T_L$, the configurations in $\bb H_2$
which can be reached from a configuration in $\mc E^{i,j}_{\mb x}$ by
a rate $e^{-\beta}$ jump. The set $\mc N(\mc E^{2,2}_{\mb w})$, for
instance, has the following $3n$ elements. There are $n+1$
configurations obtained when the top particle detaches itself from the
others: $\sigma^{\mb w_{2}, \mb z} \eta^{\mb w}$, where $\mb
z=(-1,n)$, $(a,n+1)$, $0\le a\le n-2$, $(n-1,n)$. There are $n-1$
configurations obtained when the particle at $\mb w_2-e_2$ moves
upward: $\sigma^{\mb w_2 - e_2, \mb z} \eta^{\mb w}$, $\mb z = (a,n)$,
$0\le a\le n-2$. There are $n-2$ configurations obtained when the
particle at $\mb w_2-e_1$ moves to the right: $\sigma^{\mb w_2 - e_1,
\mb z} \eta^{\mb w}$, $\mb z = (a,n)$, $0\le a\le n-3$. To complete
the description of the set $\mc N(\mc E^{2,2}_{\mb w})$, we have to
add the configurations $\sigma^{\mb w_3, \mb w_3+e_2}\sigma^{\mb w_2,
\mb w_3 +e_1+e_2} \eta^{\mb w}$ and $\sigma^{\mb w_2 - e_1, \mb w_2
-e_1 +e_2} \sigma^{\mb w_2, \mb w_2-2e_1+e_2} \eta^{\mb w}$.

Recall that $L$ and $K=n^2$ are fixed in this section. We proved in
\cite{bl5} that for each $\zeta\in\mc N(\mc E^{i,j}_{\mb x})$, there
exists a probability measure $\bb M(\zeta, \cdot)$ concentrated on
$\Gamma\cup\Omega^1\cup\Omega^2$ such that
\[
\lim_{\beta\to\infty} \mb P_{\zeta}[ H_{\bb H_{01}} =
H_\xi] = \bb M (\zeta, \xi).
\]
Define the rates $\mb R(\eta^{i,j}_{\mb x}, \cdot)$ by
\[
\mb R\bigl(\eta^{i,j}_{\mb x}, \xi\bigr):=
\cases{ \displaystyle\sum
_{\zeta\in\mc N(\mc E^{i,j}_{\mb x})} \bb M \bigl(\zeta, \mc E^{k,l}_{\mb y}
\bigr), &\quad $\mbox{if } \xi\in\widehat\Omega^1, \xi=
\eta^{k,l}_{\mb y},$ \vspace*{2pt}
\cr
\displaystyle\sum
_{\zeta\in\mc N(\mc E^{i,j}_{\mb x})} \bb M (\zeta, \xi), &\quad $\mbox{if } \xi\in\Gamma\cup
\Omega^2 $, \vspace*{2pt}
\cr
0, & \quad $\mbox{if } \xi\in\widehat
\Omega^3 \cup\Omega^4 $.}
\]
Note that the rates $\mb R(\eta^{i,j}_{\mb x}, \cdot)$ depend on $n$
and $L$, but not on $\beta$. In the next sections, however, $n$ and
$L$ will be taken as functions of $\beta$ and $\mb R(\eta^{i,j}_{\mb
x}, \cdot)$ will depend on $\beta$ through $n$, $L$. This remark
holds for all rates $\mb R(\eta, \xi)$ defined below in this section.

For each $\eta\in\Omega^2 \cup\Omega^4$, denote by $\mc N(\eta)$ the
set of configurations in $\bb H_2$ which can be reached from $\eta$ by
a rate $e^{-\beta}$ jump. Each set $\mc N(\eta)$ has at most $8$
configurations. Fix $\eta\in\Omega^2$ (resp., $\eta\in\Omega^4$).
We proved in \cite{bl5} that for each $\zeta\in\mc N(\eta)$, there exists
a probability measure, denoted by $\bb M (\zeta, \cdot)$, concentrated on
$\Omega^1\cup\Omega^2\cup\Omega^3$ (resp., $\Omega^3\cup\Omega^4$) such
that
\[
\lim_{\beta\to\infty} \mb P_{\zeta}[ H_{\bb H_{01}} =
H_\xi] = \bb M (\zeta, \xi).
\]
For $\eta\in\Omega^2$, let
\[
\mb R(\eta, \xi) = \cases{ \displaystyle\sum_{\zeta\in\mc N (\eta)} \bb M (\zeta,
\xi), &\quad $\mbox{if }\xi\in\Omega^2 $, \vspace*{2pt}
\cr
\displaystyle\sum
_{\zeta\in\mc N (\eta)} \bb M \bigl(\zeta, \mc E^{k,l}_{\mb y}
\bigr), &\quad $\mbox{if }\xi\in\widehat\Omega ^1, \xi=
\eta^{k,l}_{\mb y} $, \vspace*{2pt}
\cr
\displaystyle\sum
_{\zeta\in\mc N (\eta)} \bb M \bigl(\zeta, \mc E^{\mf a,k}_{\mb y}
\bigr), &\quad $ \mbox{if }\xi\in\widehat \Omega^3, \xi=
\zeta^{\mf a,k}_{\mb y} $, \vspace*{2pt}
\cr
0, &\quad  $\mbox{if } \xi\in
\Gamma\cup\Omega^4 $,}
\]
and for $\eta\in\Omega^4$, let
\[
\mb R(\eta, \xi) = \cases{ \displaystyle\sum_{\zeta\in\mc N (\eta)} \bb M (\zeta,
\xi), &\quad $\mbox{if }\xi\in\Omega^4 $, \vspace*{2pt}
\cr
\displaystyle\sum
_{\zeta\in\mc N (\eta)} \bb M \bigl(\zeta, \mc E^{\mf a,k}_{\mb y}
\bigr), &\quad $\mbox{if }\xi\in\widehat \Omega^3, \xi=
\zeta^{\mf a,k}_{\mb y} $, \vspace*{2pt}
\cr
0, & \quad$\mbox{if } \xi\in
\Gamma\cup\widehat\Omega^1\cup\Omega^2 $.}
\]

For each $\mb x\in\bb T_L$, $\mf a\in\{\mf l,\mf s\}$, $0\le j\le3$,
denote by $\mc N(\mc E^{\mf a,j}_{\mb x})$ the set of configurations
in $\bb H_2$ which can be reached from a configuration in $\mc E^{\mf
a,j}_{\mb x}$ by a rate $e^{-\beta}$ jump. We proved in \cite{bl5}
that for each $\zeta\in\mc N(\mc E^{\mf a,j}_{\mb x})$, there exists
a probability measure $\bb M (\zeta, \cdot)$
concentrated on $\Omega^2\cup\Omega^3\cup\Omega^4$ such that
\[
\lim_{\beta\to\infty} \mb P_{\zeta}[ H_{\bb H_{01}} =
H_\xi] = \bb M (\zeta, \xi).
\]
Define the rates $\mb R (\zeta^{\mf a,j}_{\mb x}, \xi)$ by
\[
\mb R \bigl(\zeta^{\mf a,j}_{\mb x}, \xi\bigr) =
 \cases{ \displaystyle\sum
_{\zeta\in\mc N(\mc E^{\mf a,j}_{\mb x})} \bb M (\zeta, \xi),
 &\quad $\mbox{if } \xi\in
\Omega^2 \cup\Omega^4 $,\vspace*{2pt}
\cr
\displaystyle\sum
_{\zeta\in\mc N(\mc E^{\mf a,j}_{\mb x})}
\bb M \bigl(\zeta, \mc E^{\mf a,k}_{\mb y}
\bigr), & \quad
$\mbox{if }\xi\in\widehat\Omega^3,\xi = \zeta^{\mathfrak{ a},k}_{\mathbf y},$\vspace*{2pt}\cr
0, &\quad $\mbox{if } \xi\in\Gamma\cup\widehat\Omega^1$.}
\]

In \cite{bl5}, we expressed the probability measures $\bb M$ introduced
above in terms of hitting probabilities of simple finite-state Markov
processes. We recall below some of these explicit expressions.

\subsection*{The process $\widehat{\zeta}(t)$}
Denote by $\widehat{\zeta}(t)$ the continuous-time Markov chain on
$\Xi$ whose jump rates, denoted by $R_{\widehat{\zeta}} (\eta,\xi)$,
are given by
\[
R_{\widehat{\zeta}} (\eta,\xi) = \cases{
e^{-\beta} \mb R (\eta,\xi), &\quad $\eta\in\Xi\setminus\Gamma, \xi\in\Xi,$
\vspace*{2pt}\cr
e^{-2\beta} \mb R (\eta,\xi), &\quad $\eta\in\Gamma, \xi\in\Xi.$}
\]

In contrast to the jump rates of the trace process, the jump rates
$\mb R(\eta,\xi)$ vanish for a great number of pair of configurations.
In the next section, we show that the trace process $\zeta(t)$ is well
approximated by the Markov process $\widehat{\zeta}(t)$. In contrast
to $\zeta(t)$, the process $\widehat{\zeta}(t)$ may not be reversible
with respect to $\mu_\zeta$ since the detailed balance conditions
between configurations in $\Gamma$ and configurations in $\Xi
\setminus\Gamma$ may fail. The reflection of $\widehat{\zeta}(t)$ on
subsets of $\Xi\cap\bb H_1$ is however reversible with respect to
the uniform measure in view of \eqref{eq:2} below.

Denote by $\mb P^{\widehat{\zeta}}_\xi$, $\xi\in\Xi$, the probability
measure on the path space $D(\bb R_+, \Xi)$ induced by the Markov
process $\widehat{\zeta}(t)$ starting from $\xi$. Expectation with
respect to $\mb P^{\widehat{\zeta}}_\xi$ is represented by $\mb
E^{\widehat{\zeta}}_\xi$.

\subsection*{Properties of the rates $\mathbf{R}(\eta,\xi)$}

We claim that for each fixed $n$ and $L$,
%
\begin{equation}
\label{eq:2} \mb R(\eta,\xi) = \mb R(\xi,\eta),\qquad \eta, \xi\in\Xi\setminus
\Gamma.
\end{equation}
These identities can be checked directly from the definition of the
rates $\mb R(\xi,\eta)$ or can be deduced from the results of the
previous section. Indeed, since the trace process $\zeta(t)$ is
reversible, $\mu_K(\eta) R_{\zeta} (\eta,\xi) = \mu_K(\xi)
R_{\zeta}(\xi,\eta)$. For $\eta$, $\xi\in\Xi\setminus\Gamma$,
$\mu_K(\eta) = \mu_K(\xi)$ so that $R_{\zeta} (\eta,\xi) =
R_{\zeta}(\xi,\eta)$. The results of the previous section are in force
if we fix $n$ and $L$ and let $\beta\uparrow\infty$. Therefore, by
Proposition \ref{s08}, $e^{-\beta} \mb R (\eta,\xi) = e^{-\beta} \mb R
(\xi,\eta) + o(e^{-\beta})$, from which the claim follows.

Let $\xi^\star_1 = \sigma^{\mb w_0, \mb w_0-e_2} \eta^{\mb0}$, $\xi
^\star_2 =
\sigma^{\mb w_0, \mb w_0-e_1} \eta^{\mb0}$ and for $j=1$, $2$, let
%
\begin{equation}
\label{47} \bb M_j(\xi) = \lim_{\beta\to\infty} \mb
P_{\xi^\star_j} [ H_{\bb
H_{01}} = H_{\xi} ].
\end{equation}
We have computed $\bb M_j$ in \cite{bl5}, with the difference that the
sets $\mc E^{i,j}_{\mb x}$ in \cite{bl5} have been replaced here by the
configurations $\eta^{i,j}_{\mb x}$ so that $\bb M_j(\mc E^{i,j}_{\mb
x})$ in \cite{bl5} corresponds to $\bb M_j(\eta^{i,j}_{\mb x})$
here. We have that
\begin{eqnarray*}
\bb M_1\bigl(\eta^{0,0}_{\mb0}\bigr) &=&
\frac{1}2 \biggl\{ \frac{1 + \mf r^-_n + \mf A_{0,3}}{
4 + \mf q_n + \mf r_n - \mf A} + \frac{1 + \mf r^-_n + \mf A_0 - \mf A_3}{
4 + \mf q_n + \mf r_n + \mf A(e_1) - \mf A(e_2)} \biggr\},
\\
 \bb M_2\bigl(\eta^{0,0}_{\mb0}\bigr) &=&
\frac{1}2 \biggl\{ \frac{1 + \mf r^-_n + \mf A_{0,3}}{
4 + \mf q_n + \mf r_n - \mf A} - \frac{1 + \mf r^-_n + \mf A_0 - \mf A_3}{
4 + \mf q_n + \mf r_n + \mf A(e_1) - \mf A(e_2)} \biggr\},
\end{eqnarray*}
where
\begin{eqnarray*}
 \mf A_j &=& \mf p\bigl(\mb w_0 - 2e_2,
Q^{0,j}_{\mb0}\bigr) + \mf p\bigl(\mb w_0 -
e_1 - e_2, Q^{0,j}_{\mb0}\bigr),
\\
 \mf A(e_i) &=& \mf p(\mb w_0 - 2e_2, \mb
w_0-e_i) + \mf p(\mb w_0 - e_1
- e_2, \mb w_0-e_i ).
\end{eqnarray*}

\begin{lemma}
\label{s19}
For all $0\le i,j\le3$, $\mb x\in\bb T_L$,
%
\begin{eqnarray}
\label{42} \mb R\bigl(\eta^{i,j}_{\mb x},
\eta^{i,j\pm1}_{\mb x}\bigr) &\ge& (1/4)^3,\qquad \mb R\bigl(
\eta^{i,i-1}_{\mb x},\eta^{\mb x}\bigr) = \mb R\bigl(
\eta^{i,i}_{\mb x},\eta^{\mb x}\bigr) \ge 1/4,
\nonumber
\\[-8pt]
\\[-8pt]
\nonumber
 \mb R\bigl(\eta^{i,i}_{\mb x}, \eta^{i-1,i}_{\mb x}
\bigr) &= &\mb R\bigl(\eta^{i-1,i}_{\mb x}, \eta^{i,i}_{\mb x}
\bigr) \ge 1.
\end{eqnarray}
Moreover,
%
\begin{equation}
\label{46} \mb R\bigl(\eta^{i,i}_{\mb x},
\Omega^2\bigr) = \frac{1}n, \qquad\mb R\bigl(\eta^{i,i+1}_{\mb x},
\Omega^2\bigr) = \frac{1}n + \frac{1}{n-1} \cdot
\end{equation}
\end{lemma}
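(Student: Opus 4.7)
The argument is a direct computation using the explicit expressions for $\bb M(\zeta,\cdot)$ obtained in \cite{bl5} and recalled above for $\bb M_1$, $\bb M_2$, combined with the symmetries built into the definitions of $\Xi$ and $\mb R$.

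\textbf{Symmetry reductions.} By the translation invariance of all the objects in play, fix $\mb x=\mb 0$. The four-fold rotational symmetry of $Q$ around its centre then allows me to fix $i=0$, so the rates in \eqref{42} reduce to only a couple of distinct quantities for $j\in\{0,1,2,3\}$. The first equality $\mb R(\eta^{i,i-1}_{\mb x},\eta^{\mb x})=\mb R(\eta^{i,i}_{\mb x},\eta^{\mb x})$ follows from the reflection across the diagonal of $Q$ through the missing corner $\mb w_i$: this reflection interchanges sides $i-1$ and $i$ of the quasi-square $Q^i$ while fixing $\eta^{\mb x}$, so it bijects $\mc E^{i,i-1}_{\mb 0}$ with $\mc E^{i,i}_{\mb 0}$ and preserves $\bb M$. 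The second equality $\mb R(\eta^{i,i}_{\mb x},\eta^{i-1,i}_{\mb x})=\mb R(\eta^{i-1,i}_{\mb x},\eta^{i,i}_{\mb x})$ is a particular case of the reversibility relation \eqref{eq:2}.

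\textbf{Lower bounds in \eqref{42}.} For each of these inequalities I would exhibit a single $\zeta\in\mc N(\mc E^{i,j}_{\mb 0})$ whose contribution $\bb M(\zeta,\mc E^{k,l}_{\mb y})$ already exceeds the claimed constant. For $\mb R(\eta^{0,j}_{\mb 0},\eta^{0,j\pm 1}_{\mb 0})\ge (1/4)^3$, pick $\zeta$ in which the detached particle sits on the outer boundary of $Q^0$ one step away from the corner shared by sides $j$ and $j\pm 1$; under the random-walk description of $\bb M(\zeta,\cdot)$ from \cite{bl5}, three successive steps each taken with probability $\ge 1/4$ suffice to bring the detached particle to the appropriate side-$(j\pm 1)$ attachment site before any corner-collapse event can occur, yielding the factor $(1/4)^3$. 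The bound $\mb R(\eta^{i,i}_{\mb 0},\eta^{\mb 0})\ge 1/4$ comes from a $\zeta$ in which the detached particle lies one site away from the missing corner $\mb w_i$; it fills the corner on the next step with probability $\ge 1/4$, producing the ground state $\eta^{\mb 0}$. The bound $\mb R(\eta^{i,i}_{\mb 0},\eta^{i-1,i}_{\mb 0})\ge 1$ is obtained by summing several such contributions from distinct $\zeta\in\mc N(\mc E^{i,i}_{\mb 0})$, each corresponding to a boundary path from side $i$ to side $i-1$ that goes around the missing corner and carries probability bounded below by a constant.

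\textbf{The exact identities \eqref{46} and main obstacle.} The total rate to $\Omega^2$ is obtained by summing $\bb M(\zeta,\Omega^2)$ over $\zeta\in\mc N(\mc E^{i,j}_{\mb 0})$ using the explicit formulas of \cite{bl5}. Starting from $\eta^{i,j}_{\mb 0}$, a configuration in $\Omega^2$ is reached exactly when the detached particle reattaches to one of the sides of $Q^i$ at a non-midpoint position compatible with forming an asymmetric rectangle having at least two particles on each side. Along each such side of length $n$ or $n-1$ the hitting distribution of the detached particle is of order $1/n$, respectively $1/(n-1)$, at each position, and after the cancellations produced by the midpoint representative convention defining $\widehat\Omega^1$, the sum collapses to $1/n$ for $\eta^{i,i}_{\mb 0}$ and to $1/n+1/(n-1)$ for $\eta^{i,i+1}_{\mb 0}$; the extra term in the second case records that, starting from $\eta^{i,i+1}_{\mb 0}$, the detached particle has access after the initial rate-$e^{-\beta}$ jump to one more side of the quasi-square. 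The main obstacle is precisely this bookkeeping in \eqref{46}: one must enumerate $\mc N(\mc E^{i,j}_{\mb 0})$ carefully, identify for each $\zeta$ which reattachment positions fall in the midpoint-singled set $\widehat\Omega^1$ versus in $\Omega^2$, and verify that the explicit hitting probabilities from \cite{bl5} telescope to the stated simple fractions; the lower bounds in \eqref{42} are, by contrast, soft and require only one favorable path per rate.
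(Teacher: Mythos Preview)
Your treatment of \eqref{42} follows the paper's approach: reduce by symmetry to $i=0$, $\mb x=\mb 0$, invoke the reversibility \eqref{eq:2} and the diagonal reflection for the two equalities, and for each lower bound exhibit a configuration $\zeta\in\mc N(\mc E^{0,0}_{\mb 0})$ whose contribution $\bb M(\zeta,\cdot)$ already suffices. One remark: your argument for $\mb R(\eta^{i,i}_{\mb 0},\eta^{i-1,i}_{\mb 0})\ge 1$ (``several contributions, each bounded below by a constant'') is too vague to yield $\ge 1$. In the paper this comes from a single identity taken from \cite[Lemma~4.2]{bl5}: the subclass $\mc A_1=\{\sigma^{\mb w+e_2,\mb z}\eta^{\mb w}:\mb z\in Q^{0,0}\}\subset\mc N(\mc E^{0,0}_{\mb 0})$, of cardinality $n-1$, satisfies $\sum_{\zeta\in\mc A_1}\bb M(\zeta,\cdot)=\mb 1\{\cdot=\eta^{3,0}_{\mb 0}\}+(\text{nonnegative})$, which already forces the total rate at $\eta^{3,0}_{\mb 0}$ to be $\ge 1$.

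Your derivation of \eqref{46} has a genuine gap: the mechanism you describe---the detached particle reattaching at a non-midpoint position, with $1/n$ and $1/(n-1)$ coming from hitting distributions along sides of those lengths---is not how $\Omega^2$ is reached, and cannot produce exact equalities. Reattaching the detached particle to a side of $Q^i$ lands in $\Omega^1$, never in $\Omega^2$. In the paper's decomposition of $\mc N(\mc E^{0,0}_{\mb 0})$ into five classes, \emph{exactly one} configuration has $\bb M(\zeta,\Omega^2)>0$, namely $\zeta=\sigma^{\mb w,\mb w_1-e_1-e_2}\sigma^{\mb w_1,\mb w_1-e_2}\eta^{\mb 0}$. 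This $\zeta$ arises from the element of $\mc E^{0,0}$ whose attached particle sits at $(n-2,-1)$ by sliding the \emph{second} bottom corner $\mb w_1$ down, so that both bottom corners of $Q$ are vacated and two particles sit side by side below the square. From $\zeta$ an entire row/column can slide one unit into a configuration of $\Omega^2$; the probability that this slide completes rather than collapsing back is a gambler's-ruin probability equal to $1/n$, and that is the whole contribution (equation \eqref{43}). For $\eta^{i,i+1}_{\mb 0}$ there are two such corner-slide configurations in $\mc N(\mc E^{i,i+1}_{\mb 0})$, yielding $1/n$ and $1/(n-1)$; every other $\zeta$ has $\bb M(\zeta,\Omega^2)=0$. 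The extra $1/(n-1)$ therefore does not come from ``access to one more side'' but from a second admissible corner-slide event.
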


\begin{pf}
To fix ideas consider the case $i=j=0$, $\mb x=\mb w$. Recall the
explicit form of $\bb M$ given in \cite{bl5}. As we have
seen in Section~\ref{sec3}, the set $\mc N(\mc E^{0,0}_{\mb w})$ has $3n$
elements which can be divided into five classes. Consider first the set
$\mc A_1= \{\sigma^{\mb w+e_2, \mb z} \eta^{\mb w} \dvtx \mb z\in
Q^{0,0}\}$ which has $n-1$ configurations. By \cite{bl5}, Lemma 4.2,
%
\begin{equation}
\label{35} \sum_{\zeta\in\mc A_1} \bb M (\zeta,\xi) = \mb1
\bigl\{\xi= \eta^{3,0}_{\mb w}\bigr\} + \sum
_{k=1}^{n-1} \mf r^0_n(k)
\bb M_1(\xi),
\end{equation}
where $\mf r^0_n(k)$ has been defined in \eqref{30} and $\bb M_1(\xi)$ in
\eqref{47}.

The second class consists of the set of configurations $\mc A_2=
\{\sigma^{\mb w+e_1, \mb z} \eta^{\mb w} \dvtx \mb z=(k,-1), 2\le k\le
n-1\}$ which has $n-2$ elements. By \cite{bl5}, Lemma 4.2,
\[
\bb M \bigl(\zeta,\eta^{0,0}_{\mb w}\bigr) = 1,\qquad \zeta\in\mc
A_2.
\]
In particular, this subset of $\mc N(\mc E^{0,0}_{\mb w})$ does not
contribute to the rate $\mb R(\eta^{0,0}_{\mb x}, \cdot)$.

Two special configurations form each class:
%
\begin{eqnarray}
\label{43} & &\bb M \bigl(\sigma^{\mb w, \mb w_1-e_1-e_2} \sigma^{\mb w_1,
\mb w_1-e_2}
\eta^{\mb0}, \xi\bigr)
\nonumber
\\[-8pt]
\\[-8pt]
\nonumber
&&\qquad = \frac{1}n \mb1\bigl\{\xi= \sigma^{\mb w, \mb w_1-e_1-e_2}
\sigma^{\mb w_2, \mb w_1-e_2}\eta^{\mb0}\bigr\} + \biggl(1-\frac{1}n
\biggr) \mb1\bigl\{\xi= \eta^{0,0}_{\mb w} \bigr\}
\end{eqnarray}
and
\[
\bb M \bigl(\sigma^{\mb w, \mb w+e_1-e_2} \sigma^{\mb w + e_1,
\mb w +2e_1-e_2}\eta^{\mb0},
\eta^{0,0}_{\mb w}\bigr) = 1.
\]
This latter configuration also does not contribute to the rate $\mb
R(\eta^{0,0}_{\mb0}, \cdot)$.

The last set is $\mc A_5 = \{\sigma^{\mb w, \mb z} \eta^{\mb0} \dvtx \mb
z\in\Gamma_{0,0}\}$, where $\Gamma_{0,0}$ stands for the set of sites
$\mb z\in\bb T_L$ which do not belong to $Q$ and are at distance $1$
from $Q^{0,0}$. This set has $n+1$ elements and
%
\begin{eqnarray}
\label{44} \sum_{\zeta\in\mc A_5} \bb M (\zeta,\xi) &=& \sum
_{\mb z\in\Gamma_{0,0}} \sum_{j=0}^3
\mf p\bigl(z,Q^{0,j}\bigr) \mb1\bigl\{\xi= \eta^{0,j}_{\mb0}
\bigr\}
\nonumber
\\[-8pt]
\\[-8pt]
\nonumber
&&{}+ \sum_{\mb z\in\Gamma_{0,0}} \bigl\{ \mf p(z,\mb
w-e_2) \bb M_1(\xi) + \mf p(z,\mb w-e_1) \bb
M_2(\xi) \bigr\},
\end{eqnarray}
where $\bb M_1(\xi)$ and $\bb M_2(\xi)$ have been defined in \eqref{47}.

Now that we have explicit formulas for the rates $\mb
R(\eta^{0,0}_{\mb0}, \cdot)$, we may prove the bounds claim in the
lemma. By \eqref{35} $\mb R(\eta^{0,0}_{\mb0},\eta^{3,0}_{\mb0})
\ge1$, and by \eqref{44},
\[
\mb R\bigl(\eta^{0,0}_{\mb0},\eta^{0,1}_{\mb0}
\bigr) \ge \mb p(\mb w_1+e_1-e_2, \mb
w_1+e_1) = \tfrac{1}4,
\]
where $\mb p$ represents the jump probabilities of a nearest-neighbor,
symmetric, discrete-time random walk on $\bb T_L$. By similar reasons,
$\mb R(\eta^{0,0}_{\mb0}, \eta^{\mb0}) \ge1/4$ and $\mb
R(\eta^{0,0}_{\mb0}, \eta^{0,3}_{\mb0}) \ge(1/4)^3$. By symmetry,
$\mb R(\eta^{0,3}_{\mb0}, \eta^{\mb0}) = \mb R(\eta^{0,0}_{\mb0},
\eta^{\mb0})$. This proves \eqref{42} in view of \eqref{eq:2}.

To prove \eqref{46}, note that $\bb M (\zeta, \Omega^2)$ vanishes for
all configurations but for $\zeta= \sigma^{\mb w, \mb w_1-e_1-e_2}
\sigma^{\mb w_1, \mb w_1-e_2}\eta^{\mb0}$ in which case it is equal
to $1/n$, as observed in \eqref{43}. A~similar assertion holds for
$\bb M (\zeta, \Omega^2)$, $\zeta\in\mc N(\mc E^{0,1}_{\mb w})$, but
in this case there are two configurations which contribute to $\mb
R(\eta^{0,1}_{\mb0}, \Omega^2)$. We leave the details to the reader.
\end{pf}

\begin{lemma}
\label{s17}
For $n\ge9$, we have that
\[
\max_{\eta\in\mc V(\eta^{\mb x})} \mb P^{\widehat{\zeta}}_{\eta} [
H_{\mc G_{\mb x}^c} < H_{\eta^{\mb x}}] \le \frac{23}n \cdot
\]
\end{lemma}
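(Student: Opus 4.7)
The proof reduces to a Green-function computation on the finite graph $\mc G_{\mb x}$ under the $\widehat\zeta$-dynamics. Speeding up time by the factor $e^\beta$ replaces the rates $R_{\widehat\zeta}$ by $\mb R$ on $\Xi\setminus\Gamma$ and leaves $u(\eta) := \mb P^{\widehat\zeta}_\eta[H_{\mc G_{\mb x}^c} < H_{\eta^{\mb x}}]$ invariant. Let $G(\eta,\xi)$ denote the expected occupation time of the accelerated chain at $\xi\in\mc V(\eta^{\mb x})$, started at $\eta$ and killed at $\{\eta^{\mb x}\}\cup \mc G_{\mb x}^c$. Then
\[
u(\eta) \;=\; \sum_{\xi\in\mc V(\eta^{\mb x})} G(\eta,\xi)\, \mb R(\xi,\mc G_{\mb x}^c)\;,
\]
and the goal is to bound each factor. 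Note that the identity \eqref{eq:2} makes this killed chain reversible with respect to the counting measure on $\mc V(\eta^{\mb x})$.

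For the occupation time, I would exploit the explicit information uncovered in the proof of Lemma~\ref{s19}. The total out-rate $\sum_\xi \mb R(\eta^{i,j}_{\mb x},\xi) = |\mc N(\mc E^{i,j}_{\mb x})|$ is of order $n$, so the mean holding time at any state of $\mc V(\eta^{\mb x})$ is $O(1/n)$. Moreover, the eight ``good'' states $\eta^{i,i}_{\mb x}$ and $\eta^{i,i-1}_{\mb x}$ admit transitions to $\eta^{\mb x}$ at rate of order $n$ (coming from the $\mc A_2$-type configurations in $\mc N(\mc E^{i,j}_{\mb x})$ which contribute $n-2$ to the rate going to $\eta^{\mb x}$). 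Combined with the $4$-cycle structure within each block $\{\eta^{i,j}_{\mb x}: 0\le j\le 3\}$ and the crossings $\eta^{i,i}_{\mb x}\leftrightarrow\eta^{i-1,i}_{\mb x}$ at rate $\ge 1$, this forces the embedded jump chain to be absorbed at $\{\eta^{\mb x}\}\cup \mc G_{\mb x}^c$ after $O(1)$ visits. Hence $\sum_\xi G(\eta,\xi) \le C_2/n$ uniformly in $\eta$ and $n$.

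For the escape rates, the contributions to $\Omega^2$ are exactly the values $1/n$ and $1/n+1/(n-1)$ given by Lemma~\ref{s19}. The remaining contributions to $\mb R(\xi,\mc G_{\mb x}^c)$, namely those into $\Gamma\setminus\{\eta^{\mb x}\}$ and $\widehat\Omega^1\setminus\mc V(\eta^{\mb x})$, are read off from the same explicit formulas for $\bb M(\zeta,\cdot)$, $\zeta\in\mc N(\mc E^{i,j}_{\mb x})$, appearing in the proof of Lemma~\ref{s19}: the $\mf r^0_n(k)$-weights in \eqref{35}, the $\mf p$-terms in \eqref{44}, and the expressions for $\bb M_1, \bb M_2$ recalled just above Lemma~\ref{s19}. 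Each of them is bounded by a universal constant, so $\mb R(\xi,\mc G_{\mb x}^c)\le C_1$ uniformly. Multiplying the two bounds gives $u(\eta)\le C_1 C_2/n$.

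The main obstacle is producing the explicit constant $23$. Qualitatively the bound $u(\eta)=O(1/n)$ is routine once one accepts that the rates back to $\eta^{\mb x}$ are of order $n$, but fixing the numerical value requires listing every contribution to $\mb R(\xi,\mc G_{\mb x}^c)$ generated by the transition mechanisms of Lemma~\ref{s19} and bounding them case by case, together with a sharp estimate on the mean absorption time (either via an explicit Lyapunov function on $\mc V(\eta^{\mb x})$ or by solving the sixteen-state Dirichlet problem directly, exploiting the rotational symmetry of the graph). The hypothesis $n\ge 9$ is used both to guarantee that the underlying geometric configurations of Section~\ref{sec3} are well-defined and to dominate subleading terms, for instance via $1/(n-1)\le (9/8)/n$.
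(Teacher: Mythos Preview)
Your Green-function identity $u(\eta)=\sum_\xi G(\eta,\xi)\,\mb R(\xi,\mc G_{\mb x}^c)$ is correct, but you have allocated the factor $1/n$ to the wrong term, and the argument as written collapses.

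The error is in reading the $\mc A_2$ contribution from the proof of Lemma~\ref{s19}. The paper states $\bb M(\zeta,\eta^{0,0}_{\mb w})=1$ for $\zeta\in\mc A_2$: these $n-2$ configurations return to the \emph{same} state $\eta^{0,0}_{\mb w}\in\widehat\Omega^1$, not to the square $\eta^{\mb w}\in\Gamma$. They are self-loops of the $\widehat\zeta$-chain and contribute nothing to the holding rate. In fact almost all of the mass $|\mc N(\mc E^{i,j}_{\mb x})|=3n$ is self-loop ($\mc A_2$ entirely, most of $\mc A_1$ and $\mc A_5$ as well); the genuine out-rate of $\widehat\zeta$ from any $\eta^{i,j}_{\mb x}$ is only $O(1)$, and $\mb R(\eta^{i,i}_{\mb x},\eta^{\mb x})$ is of order $1$ --- this is exactly why the paper only records $\mb R(\eta^{i,i}_{\mb x},\eta^{\mb x})\ge 1/4$. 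Consequently the mean absorption time $\sum_\xi G(\eta,\xi)$ is $O(1)$, not $O(1/n)$.

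Your bound on the escape rate is also off. There are \emph{no} transitions from $\mc V(\eta^{\mb x})$ into $\Gamma\setminus\{\eta^{\mb x}\}$ or into $\widehat\Omega^1\setminus\mc V(\eta^{\mb x})$: the measures $\bb M(\zeta,\cdot)$ for $\zeta\in\mc N(\mc E^{i,j}_{\mb x})$ are supported on $\{\eta^{\mb x}\}\cup\mc V(\eta^{\mb x})\cup\Omega^2$. The only exit from $\mc G_{\mb x}$ is to $\Omega^2$, and \eqref{46} gives this rate exactly as $1/n$ or $1/n+1/(n-1)$. That is where the $1/n$ lives.

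With the correct allocation your scheme becomes $u(\eta)\le\tfrac{2}{n-1}\cdot\mb E_\eta[H_{\{\eta^{\mb x}\}\cup\mc G_{\mb x}^c}]$, and one must now bound the expected absorption time by an explicit constant. The paper does this by the symmetry reduction to a four-state chain $\{0,1,2,3\}$ (square, $\mc C_1$, $\mc C_2$, outside), observes $R_{\widehat\chi}(1,0)\ge 1/4$, $R_{\widehat\chi}(1,2)=R_{\widehat\chi}(2,1)$, $R_{\widehat\chi}(i,3)\le 2/(n-1)$, couples with an explicit birth--death chain, and computes the hitting probability in closed form. This is what produces the constant $23$ for $n\ge 9$.
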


\begin{pf}
Fix $\mb x=\mb0$ and let $\chi(t)$ be the Markov process $\chi(t) =
\widehat{\zeta}(t\wedge H_{\mc B})$, where $\mc B = \mc V(\eta^{\mb
0})^c$. We identify all configurations in $\mc G_{\mb0}^c$, turning
$\chi(t)$ a Markov process on $\mc G_{\mf d} = \mc G_{\mb0} \cup
\{\mf d\}$, where $\mf d$ replaces $\mc G_{\mb0}^c$.

There are two types of configurations in $\mc V(\eta^{\mb0})$,
configurations in which the attached particle is on the same side of
the empty corner and configurations in which this is not the case. We
denote by $\mc C_1 = \{\eta^{i,i}_{\mb0}, \eta^{i+1,i}_{\mb0} \dvtx 0\le i\le3\}$ the first set, and by $\mc C_2 = \{\eta^{i+2,i}_{\mb0}, \eta^{i+3,i}_{\mb0} \dvtx 0\le i\le3\}$ the second one. Define
$F\dvtx \mc G_{\mf d} \to\{0,1,2,3\}$ by
\[
F\bigl(\eta^{\mb0}\bigr) =0,\qquad F(\mf d)=3,\qquad F(\eta) = i,\qquad \eta\in\mc
C_i, i=1,2.
\]
By symmetry $\widehat\chi(t) = F(\chi(t))$ is a four state Markov
process and by construction
\[
\mb P^{\widehat{\zeta}}_{\eta} [ H_{\mc G_{\mb0}^c} < H_{\eta^{\mb
0}}] =
\mb P^{\widehat\chi}_{i} [ H_{3} < H_{0}]
\]
if $\eta\in\mc C_i$, where $\mb P^{\widehat\chi}_{i}$ represents the
distribution of the process $\widehat\chi(t)$ starting from $i$.

Denote by $R_{\widehat\chi}$ the jump rates of the process $\widehat
\chi(t)$. Since $\mb R(\eta,\xi) = \mb R(\xi,\eta)$ for $\eta\in\mc
C_1$, $\xi\in\mc C_2$, and since $|\mc C_1| = |\mc C_2|$, $R_{\widehat
\chi} (1,2) = R_{\widehat\chi} (2,1)$. By Lemma \ref{s19},
$R_{\widehat\chi} (1,0) = \mb R(\eta^{0,0}_{\mb0}, \eta^{\mb0}) \ge
1/4$, $R_{\widehat\chi} (1,2) \ge\mb R(\eta^{0,0}_{\mb0},
\eta^{3,0}_{\mb0}) \ge1$, and $R_{\widehat\chi} (i,3) \le n^{-1} +
(n-1)^{-1}$ for $i=1$, $2$.

Consider the Markov process $Y_t$ on $\{0,1,2,3\}$ with jump rates $r$
given by $a:=r(1,2) = r(2,1) = R_{\widehat\chi} (1,2)\ge1$, $r(1,0)
= 1/4$ and $r(1,3) = r(2,3) = 2/(n-1)$. A coupling shows that for $i=1$,
$2$,
\[
\mb P^{\widehat\chi}_{i} [ H_{3} < H_{0}] \le
\mb P^{Y}_{i} [ H_{3} < H_{0}] \le
\frac{4\varepsilon
[2a + (1/4) + \varepsilon]}{a + 4\varepsilon
[2a + (1/4) + \varepsilon] },
\]
where $\varepsilon= 2/(n-1)$ and $\mb P^{Y}_{i}$ represents the
distribution of the process $\widehat\chi(t)$ starting from $i$. At last
inequality is an identity if $i=2$. The right-hand side of the
previous displayed equation is uniformly bounded in $a\ge1$ by
$(9+4\varepsilon)\varepsilon\le10 \varepsilon$ provided $\varepsilon\le1/4$.
This proves the lemma.
\end{pf}

The next result states that the chain $\widehat{\zeta}(t)$ jumps with
a probability bounded below by a positive constant independent of
$\beta$ from a configuration $\zeta^{\mf l, 3}_{\mb x}$ to the
configuration $\zeta^{\mf l, 2}_{\mb x}$.

\begin{lemma}
\label{ls02}
Denote by $\mc V(\zeta^{\mf l, 3}_{\mb x})$ the set of configurations
$\xi$ in $\Xi$ such that $\mb R(\zeta^{\mf l, 3}_{\mb x},\xi) >0$,
$\mc V(\zeta^{\mf l, 3}_{\mb x})=\{\xi\in\Xi\dvtx  \mb R(\zeta^{\mf l,
3}_{\mb x},\xi) >0\}$. There exists $c_0>0$ such that
\[
\mb P^{\widehat{\zeta}}_{\zeta^{\mf l, 3}_{\mb x}} [ H_{\zeta^{\mf l, 2}_{\mb x}} = H_{\mc V(\zeta^{\mf l, 3}_{\mb x})}]
\ge c_0.
\]
\end{lemma}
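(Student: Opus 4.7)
The plan is to rewrite the probability as a ratio of sums involving the measures $\bb M$ and then reduce it to a random-walk absorption problem. Since each $\bb M(\zeta,\cdot)$ is a probability measure supported on $\Omega^2\cup\Omega^3\cup\Omega^4$, the definition of the rates $\mb R(\zeta^{\mf l,3}_{\mb x},\cdot)$ gives
\[
\mb P^{\widehat\zeta}_{\zeta^{\mf l,3}_{\mb x}}\bigl[H_{\zeta^{\mf l,2}_{\mb x}}=H_{\mc V(\zeta^{\mf l,3}_{\mb x})}\bigr] \;=\; \frac{\sum_{\zeta\in\mc N(\mc E^{\mf l,3}_{\mb x})} \bb M(\zeta,\mc E^{\mf l,2}_{\mb x})}{\sum_{\zeta\in\mc N(\mc E^{\mf l,3}_{\mb x})} \bigl[1-\bb M(\zeta,\mc E^{\mf l,3}_{\mb x})\bigr]}\;.
\]
Enumerating the rate-$e^{-\beta}$ exits from configurations in $\mc E^{\mf l,3}_{\mb x}$ shows that $\mc N(\mc E^{\mf l,3}_{\mb x})$ consists of the $n+1$ configurations in which $T^{\mf l}_{\mb x}$ is intact and a single isolated free particle sits at $(-2,k)+\mb x$ for some $0\le k\le n-2$, or at $(-1,-1)+\mb x$, or at $(-1,n-1)+\mb x$.

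For each such $\zeta$, as in \cite{bl5}, the measure $\bb M(\zeta,\cdot)$ is precisely the absorption distribution on $\partial_+ T^{\mf l}_{\mb x}$ of the free particle's symmetric nearest-neighbour random walk on $\bb T_L\setminus T^{\mf l}_{\mb x}$; absorption at $\partial_j T^{\mf l}_{\mb x}$ produces a configuration in $\mc E^{\mf l,j}_{\mb x}$. The vertical reflection $(y_1,y_2)\mapsto(y_1,n-2-y_2)$ preserves $T^{\mf l}_{\mb x}$ and permutes $\mc N(\mc E^{\mf l,3}_{\mb x})$ while exchanging $\partial_0$ and $\partial_2$, which gives $\sum_\zeta \bb M(\zeta,\mc E^{\mf l,0}_{\mb x})=\sum_\zeta \bb M(\zeta,\mc E^{\mf l,2}_{\mb x})=:a_T$. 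Setting $a_R:=\sum_\zeta \bb M(\zeta,\mc E^{\mf l,1}_{\mb x})$ and using that no mass reaches $\Omega^2$ or $\Omega^4$ in the limit (the rectangle $T^{\mf l}_{\mb x}$ is preserved by the walk), the displayed ratio reduces to $a_T/(2a_T+a_R)$.

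A lower bound $a_T\ge 1/4$ is immediate: taking $\zeta$ to be the configuration with free particle at $(-1,n-1)+\mb x$, a single step of the walk reaches $(0,n-1)+\mb x\in\partial_2 T^{\mf l}_{\mb x}$ with probability $1/4$. The main obstacle is a matching upper bound on $a_R$: although each individual $\bb M(\zeta^-_k,\mc E^{\mf l,1}_{\mb x})$ is small (the walk must circumnavigate the rectangle to reach the right face), one must control the sum over the $n-1$ starting positions $(-2,k)+\mb x$. This reduces to a harmonic-measure estimate for the absorbing random walk on $\bb T_L$; it can be obtained by comparison with the walk on $\bb Z^2$ absorbed on the infinite line $\{-1\}\times\bb Z$, for which the escape probability from the left face is zero, combined with a Gambler's-ruin argument to estimate the correction coming from the finite length of the left face and the torus wrap-around. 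Once $a_R$ is bounded by a constant, combining with $a_T\ge 1/4$ yields the desired $c_0>0$.
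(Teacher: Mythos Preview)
Your approach is exactly the one the paper takes: write the probability as $\mb R(\zeta^{\mf l,3}_{\mb x},\zeta^{\mf l,2}_{\mb x})\big/\sum_\xi \mb R(\zeta^{\mf l,3}_{\mb x},\xi)$, bound the numerator below by a constant (the corner configuration gives $\ge 1/4$), and bound the denominator above by a constant. Two points need correction.

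First, your enumeration of $\mc N(\mc E^{\mf l,3}_{\mb x})$ is incomplete. Besides the $n+1$ ``detached particle'' configurations you list, there are two further configurations obtained by a rate-$e^{-\beta}$ jump: when the attached particle sits at $\mb x+(-1,1)$ (resp.\ $\mb x+(-1,n-3)$), the adjacent corner $\mb x+(0,0)$ (resp.\ $\mb x+(0,n-2)$) of $T^{\mf l}_{\mb x}$ has exactly two neighbours and can slide to $\mb x+(-1,0)$ (resp.\ $\mb x+(-1,n-2)$), where it finds one neighbour. From these two configurations the hole performs a nearest-neighbour walk along the bottom (resp.\ top) row and, with probability $1/(n+1)$, reaches the far corner and lands in a configuration of $\Omega^4$. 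So your assertion that ``no mass reaches $\Omega^2$ or $\Omega^4$'' is not correct; the paper records $\mb R(\zeta^{\mf l,3}_{\mb x},\Omega^2)=0$ but $\mb R(\zeta^{\mf l,3}_{\mb x},\Omega^4)\le C_0/n$. This is harmless for the bound---the extra $O(1/n)$ in the denominator does not matter---but the enumeration should be fixed.

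Second, and more seriously, your argument for $a_R\le C_0$ is only a heuristic. The comparison with the infinite line tells you that each individual escape probability is small, but it does not by itself control the \emph{sum} $\sum_{k=0}^{n-2}\mf p((-2,k)+\mb x,\partial_1 T^{\mf l}_{\mb x},\partial_+ T^{\mf l}_{\mb x})$; a na\"ive bound on the escape probability from the finite segment at position $k$ gives something like $c/\min(k+1,n-1-k)^{1/2}$, whose sum diverges, so an additional input is needed to show that conditionally on escaping the shadow of side~3 the walk still rarely reaches side~1. The paper does not carry out this two-step estimate; instead it invokes the planar harmonic-measure/potential-kernel estimate of Lawler \cite[Proposition 2.4.5]{law1} to conclude directly that the sum of the $n$ terms is bounded by a constant $C_0$. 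You should either cite that result or make your Gambler's-ruin correction precise enough to close the gap.
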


\begin{pf}
By definition of the chain $\widehat{\zeta}(t)$,
\[
\mb P^{\widehat{\zeta}}_{\zeta^{\mf l, 3}_{\mb x}} [ H_{\zeta^{\mf l, 2}_{\mb x}} = H_{\mc V(\zeta^{\mf l, 3}_{\mb x})}] =
\frac{\mb R(\zeta^{\mf l, 3}_{\mb x}, \zeta^{\mf l,
2}_{\mb x})}{
\sum_{\xi\in\Xi} \mb R(\zeta^{\mf l, 3}_{\mb x},\xi)} \cdot
\]
By definition of the rates $\mb R (\eta,\xi)$, $\mb R(\zeta^{\mf l,
3}_{\mb x}, \zeta^{\mf l, 2}_{\mb x}) = \mb R(\zeta^{\mf l, 3}_{\mb
x}, \zeta^{\mf l, 0}_{\mb x}) \ge c_0$, while\break  $\mb R(\zeta^{\mf l,
3}_{\mb x}, \Omega^2) =0$ and $\mb R(\zeta^{\mf l, 3}_{\mb x},
\Omega^4) \le C_0/n$. By \cite{law1}, Proposition 2.4.5, $\mb
R(\zeta^{\mf l, 3}_{\mb x}, \zeta^{\mf l, 1}_{\mb x})$, whose value
involves the sum of $n$ terms, is bounded above by $C_0$. This proves
the lemma.
\end{pf}

The bounds \eqref{51} and \eqref{52} below are needed to estimate the
capacity between the configurations $\eta^{\mb0}$ and $\eta^{e_i}$
for the trace process $\zeta(t)$.

Consider the path $\xi_0 = \eta^{\mb0}, \xi_1, \ldots, \xi_M =
\eta^{e_1}$, $M=3n-2$, from $\eta^{\mb0}$ to $\eta^{e_1}$ in $\Xi$
obtained by sliding particles around three sides of the square $Q$:
$\xi_0 = \eta^{\mb0}, \xi_1 = \eta^{2,1}_{\mb0}, \xi_2 =
\eta^{3,1}_{\mb0}, \xi_3 = \eta^{0,1}_{\mb0}, \xi_4 = \eta^{\mf l,
((1,n-1),(n-2,n-1),(0,n-1),(1,n-2))}_{\mb0}$,
\[
\xi_5 = \eta^{\mf l, ((1,n-1),(n-2,n-1),(0,n),(0,n-3))}_{\mb0}, \ldots,
\xi_{M-4} = \eta^{\mf l, ((1,n-1), (1,n-2),(1,n-1),(0,1))}_{\mb0},
\]
$\xi_{M-3} = \eta^{1,3}_{\mb0}, \xi_{M-2} = \eta^{2,3}_{e_1},
\xi_{M-1} = \eta^{3,3}_{e_1}, \xi_M = \eta^{e_1}$. Figure~\ref{fig8}
presents the first configurations of this sequence in the case $n=6$.

We claim that
%
\begin{eqnarray}
\label{51} \mb R(\xi_0, \xi_1)& \ge&
\tfrac{1}6, \qquad\mb R(\xi_1, \xi_2) \ge 1,
\nonumber
\\[-8pt]
\\[-8pt]
\nonumber
 \mb R(
\xi_2, \xi_3) &\ge& 1,\qquad \mb R(\xi_3,
\xi_4) \ge \frac{1}n \cdot
\end{eqnarray}
The first inequality follows from the definition \eqref{50} of $\mb
R(\eta^{\mb0}, \cdot)$ and from the fact that $\mf q_n\le1$, $\mf
r_n\le1$. The other three estimates have been proven in Lemma
\ref{s19}. Similar estimates hold for the last jumps of the sequence
$(\xi_i)$. On the other hand, in view of the definition of the jump
rates $\mb R$, it is clear that
%
\begin{equation}
\label{52} \mb R(\xi_i, \xi_{i+1}) \ge
\frac{1}n
\end{equation}
for the middle terms. The expression $n^{-1}$ comes from the
probability of a symmetric nearest-neighbor random walk on $\{0, 1,
\ldots, n\}$ starting from $1$ to reach $n$ before $0$.

\begin{figure}

\includegraphics{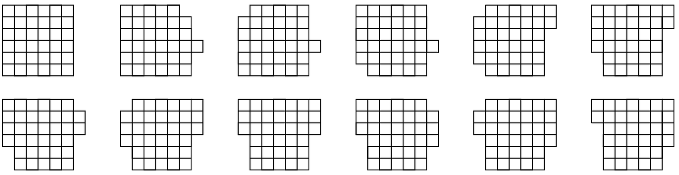}

\caption{The first $12$ configurations of the path
from $\eta^{\mb0}$ to $\eta^{e_1}$ when $n=6$. Observe the subpath
starting from the sixth configuration and ending at the ninth, as well
as the one starting from the ninth configuration and ending at the
twelfth.}
\label{fig8}
\end{figure}

\section{The jump rates of \texorpdfstring{$\zeta(t)$}{$zeta(t)$} and \texorpdfstring{$\widehat{\zeta}(t)$}{$widehat{zeta}(t)$}}
\label{sec6}

The main results of this section, Propositions \ref{s16} and
\ref{s08}, provide estimates on the difference between the jump rates
$R_\zeta(\eta,\xi)$ and $\mb R (\eta,\xi)$. The dependence of the
errors on the parameters $n$ and $L$ are explicit to allow them depend
on $\beta$.

The first lemma presents an explicit formula for the jump rates
$R_\zeta$ of the trace process. Denote by $\Cap_K (A,B)$ the capacity
between two disjoint subsets $A$ and $B$ of $\Omega_{L,K}$ for the
$\eta(t)$ process
\[
\Cap_K (A,B) = \sum_{\eta\in A}
\mu_K(\eta) \lambda(\eta) \mb P^\beta_\eta
\bigl[ H_B < H^+_A \bigr],
\]
where $\lambda(\eta)$ stands for the holding rates of the process
$\eta(t)$ and $R(\eta,\xi) = R_{\beta}(\eta,\xi) $ for the jump rates,
\[
R (\eta,\xi) = \cases{ c_{x,y}(\eta), &\quad $\mbox{if $\xi=
\sigma^{x,y}\eta$ for some $x$, $y\in \bb T_L$,
$|y-x|=1$,}$ \vspace*{2pt}
\cr
0, & \quad$\mbox{otherwise} $,}
\]
$\lambda(\eta) = \sum_{\xi\in\Omega_{L,K}} R (\eta,\xi)$.
Denote by $D_K$ the Dirichlet form associated to the process $\eta(t)$
on the ergodic component $\Omega_{L,K}$: For any function
$f\dvtx \Omega_{L,K}\to\bb R$,
\[
D_K(f) = \frac{1}2 \sum_{\eta, \xi\in\Omega_{L,K}}
\mu_K(\eta) R (\eta,\xi) \bigl[ f(\xi) - f(\eta)
\bigr]^2.
\]

\begin{lemma}
\label{s02}
For $\eta$, $\xi\in\Xi$, $\xi\neq\eta$,
%
\begin{equation}
\label{07} R_\zeta(\eta, \xi) = \frac{\Cap_K(\eta,\Delta_1)}{\mu_K(\eta)} \sum
_{\zeta\in\Delta_1} \mb P_{\eta} [ H_{\Delta_1} =
H_\zeta ] \mb P_{\zeta} [ H_{\Xi} = H_{ \xi}
].
\end{equation}
\end{lemma}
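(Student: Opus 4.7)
The plan is to combine the standard trace--process jump--rate formula with a geometric observation about $\Xi$. The starting point is the identity
\begin{equation*}
R_\zeta(\eta,\xi)\;=\;\lambda(\eta)\,\mb P_\eta\big[\eta(H_\Xi^+) = \xi\big],\qquad \xi\neq \eta,
\end{equation*}
where $H_\Xi^+$ denotes the first time after the first jump of the process at which $\eta_t\in\Xi$. This is a direct consequence of the time-change construction of $\zeta(t)$ as recorded in \cite[Proposition 6.1]{bl2}: the trace chain waits at $\eta$ an exponential time of parameter $\lambda(\eta)$ (up to geometric-renewal returns) and then jumps to the next $\Xi$-configuration visited.

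The structural input is the \emph{geometric separation} of $\Xi$ inside $\bb H_{01}$: starting from any $\eta\in\Xi$, every trajectory that first returns to $\Xi$ at a state $\xi\neq\eta$ must visit $\Delta_1$ before doing so. I would verify this case by case using the explicit description $\Xi = \Gamma\cup\widehat\Omega^1\cup\Omega^2\cup\widehat\Omega^3\cup\Omega^4$ from Section \ref{sec3}. For $\eta\in\Gamma$ the single-swap neighbors all lie in $\bb H_1\setminus\Xi$ and are at distance at least $\lfloor n/2\rfloor$ from the marker positions defining $\widehat\Omega^1\cup\widehat\Omega^3$. For the representatives of $\widehat\Omega^1\cup\widehat\Omega^3$, the choice of the marker site for the displaced particle prevents a one-step move from producing another representative. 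For $\Omega^2\cup\Omega^4$ one counts bonds: any single swap that preserves the bond count must violate one of the side-contiguity or side-multiplicity constraints defining $I^*_{\mf a}$ or $I^*_{2,\mf a}$, and therefore lands in $\bb H_1\setminus\Xi$; any other swap enters $\Delta_1$.

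Granted the separation, the identity follows from two routine manipulations. By the strong Markov property at $H_{\Delta_1}$,
\begin{equation*}
\mb P_\eta[\eta(H_\Xi^+) = \xi]\;=\;\sum_{\zeta\in\Delta_1}\mb P_\eta\big[\eta(H_{\Delta_1})=\zeta,\,H_{\Delta_1}<H_\Xi^+\big]\,\mb P_\zeta[H_\Xi=H_\xi],
\end{equation*}
and the separation shows $\{H_{\Delta_1}<H_\Xi^+\}=\{H_{\Delta_1}<H_\eta^+\}$ (on the complement $\{H_{\Delta_1}>H_\Xi^+\}$ the first $\Xi$-return must be to $\eta$ itself, so $H_\eta^+=H_\Xi^+<H_{\Delta_1}$). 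A renewal argument that conditions on whether the first excursion from $\eta$ returns to $\eta$ or reaches $\Delta_1$ first gives $\mb P_\eta[H_{\Delta_1}=H_\zeta,\,H_{\Delta_1}<H_\eta^+]=\mb P_\eta[H_{\Delta_1}<H_\eta^+]\,\mb P_\eta[H_{\Delta_1}=H_\zeta]$. Combining these two identities with the elementary capacity formula $\Cap_K(\eta,\Delta_1)/\mu_K(\eta)=\lambda(\eta)\,\mb P_\eta[H_{\Delta_1}<H_\eta^+]$ and multiplying by $\lambda(\eta)$ yields \eqref{07}. The main obstacle is the geometric separation; the algebraic rearrangements are standard strong-Markov and capacity manipulations, and the most delicate part of the verification is the bond-counting analysis for $\Omega^2$ and $\Omega^4$.
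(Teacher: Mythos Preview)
Your argument is correct and follows the paper's proof essentially step for step: the trace-rate identity from \cite[Proposition 6.1]{bl2}, the separation of distinct $\Xi$-configurations by $\Delta_1$, the strong Markov decomposition at $H_{\Delta_1}$, the renewal factorization $\mb P_\eta[H_{\Delta_1}=H_\zeta,\,H_{\Delta_1}<H_\eta^+]=\mb P_\eta[H_{\Delta_1}<H_\eta^+]\,\mb P_\eta[H_{\Delta_1}=H_\zeta]$, and the capacity formula. One minor inaccuracy in your geometric sketch: from $\eta\in\Gamma$ (and in fact from $\Omega^2\cup\Omega^4$ as well, since every particle there has at least two occupied neighbours) \emph{every} single swap already lands in $\Delta_1$, not in $\bb H_1\setminus\Xi$---this only strengthens the separation claim, and the paper itself asserts the separation as a structural fact without a detailed case check.
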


\begin{pf}
The proof of this lemma relies on the fact that the process $\eta(t)$
has to cross the set $\Delta_1$ when going from a configuration
$\eta\in\Xi$ to a configuration $\xi\in\Xi$, $\xi\neq \eta$, $\mb
P_\eta[H_{\Delta_1} < H_\xi]=1$ if $\xi\neq\eta$, $\eta$,
$\xi\in\Xi$. In contrast, there are configurations $\eta\in\Xi$
which can be hit from $\eta$ without crossing the set $\Delta_1$, for
example, the configuration $\eta^{0,0}_{\mb0}$. With a positive
probability which does not vanish in the zero-temperature limit, the
attached particle may jump to the right and then return to its
original position while all the other particles remain still. In this
event, $H^+_\eta<H_{\Delta_1}$.

Fix $\eta$, $\xi\in\Xi$, $\xi\neq\eta$.
By \cite{bl2}, Proposition 6.1,
%
\begin{equation}
\label{03} R_\zeta(\eta, \xi) = \lambda(\eta) \mb
P_{\eta} \bigl[ H^+_{\Xi} = H_{ \xi} \bigr].
\end{equation}
The process $\eta(t)$ has to cross $\Delta_1$ when going from a
configuration $\eta$ in $\Xi$ to a configuration $\xi\neq \eta$ in
$\Xi$. Therefore, on the event $\{H^+_{\Xi} = H_{\xi}\}$ we have
that $H_{\Delta_1} < H^+_{\Xi}$, and by the strong Markov property
\[
\mb P_{\eta} \bigl[ H^+_{\Xi} = H_{ \xi} \bigr] =
\sum_{\zeta\in\Delta_1} \mb P_{\zeta} [ H_{\Xi}
= H_{ \xi} ] \mb P_{\eta} \bigl[ H_\zeta=
H_{\Delta_1} < H^+_{\Xi} \bigr].
\]
We have seen that $H_{\Delta_1} < H_{\Xi\setminus\{\eta\}}$ almost
surely with respect to $\mb P_{\eta}$. Hence, the event $\{H_{\Delta_1}
< H^+_{\Xi}\}$ can be rewritten as $\{H_{\Delta_1}
< H^+_{\eta}\}$ and the previous sum becomes
\[
\sum_{\zeta\in\Delta_1} \mb P_{\zeta} [
H_{\Xi} = H_{ \xi} ] \mb P_{\eta} \bigl[
H_\zeta= H^+_{\Delta_1 \cup\{\eta\}} \bigr].
\]
On the other hand, since
\[
\mb P_{\eta} [ H_\zeta= H_{\Delta_1} ] = \mb
P_{\eta} \bigl[ H^+_\eta< H_{\Delta_1},
H_\zeta= H_{\Delta_1} \bigr] + \mb P_{\eta} \bigl[
H^+_\eta> H_{\Delta_1}, H_\zeta= H_{\Delta_1}
\bigr],
\]
observing that the event on the second term of the right-hand side
is $H_\zeta= H^+_{\Delta_1 \cup\{\eta\}}$, and
applying the strong Markov property to the first term, we obtain that
\[
\mb P_{\eta} \bigl[ H_\zeta= H^+_{\Delta_1 \cup\{\eta\}} \bigr] = \mb
P_{\eta} [ H_\zeta= H_{\Delta_1} ] \mb P_{\eta}
\bigl[ H_{\Delta_1} < H^+_\eta \bigr].
\]
Therefore, in view of \eqref{03} and the previous identities,
\[
R_\zeta(\eta, \xi) = \lambda(\eta) \mb P_{\eta} \bigl[
H_{\Delta_1} < H^+_\eta \bigr] \sum_{\zeta\in\Delta_1}
\mb P_{\zeta} [ H_{\Xi} = H_{ \xi} ] \mb
P_{\eta} [ H_\zeta= H_{\Delta_1} ].
\]
To conclude the proof of the lemma, it remains to recall the
definition of the capacity.
\end{pf}

\subsection*{The jump rates of $\zeta(t)$ on $\Gamma$}
\label{ss6}
We examine in this subsection the jump rates of order $e^{-2\beta}$ of
the trace process $\zeta(t)$. The main result reads as follows.

\begin{proposition}
\label{s16}
There exists a finite constant $C_0$ independent of $\beta$ such that
\[
\max_{\xi\in\mc G_{\mb x}} \bigl| R_{\zeta}\bigl(\eta^{\mb x}, \xi
\bigr) - e^{-2\beta} \mb R \bigl(\eta^{\mb x}, \xi\bigr) \bigr| \le
e^{-2\beta} \kappa_1,
\]
where $\kappa_1$ is a remainder absolutely bounded by $C_0 L
e^{-\beta/2}$. Moreover, for all \mbox{$\mb x\in\bb T_L$},
\[
R_{\zeta}\bigl(\eta^{\mb x}, \Xi\setminus\mc G_{\mb x}
\bigr) \le e^{-2\beta} \kappa_1.
\]
\end{proposition}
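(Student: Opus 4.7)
The starting point is Lemma \ref{s02}, which gives the exact factorization
\[
R_\zeta(\eta^{\mb x}, \xi) \;=\; \frac{\Cap_K(\eta^{\mb x},\Delta_1)}{\mu_K(\eta^{\mb x})}\;\sum_{\zeta\in\Delta_1}\mb P_{\eta^{\mb x}}[H_{\Delta_1}=H_\zeta]\;\mb P_\zeta[H_\Xi=H_\xi]\;.
\]
Since every jump from the square $\eta^{\mb x}$ raises the energy by at least $2$, one has $\mb P_{\eta^{\mb x}}[H_{\Delta_1}<H^+_{\eta^{\mb x}}]=1$, so the prefactor is exactly $\lambda(\eta^{\mb x})=8e^{-2\beta}+4(n-2)e^{-3\beta}$. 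Absorbing it into the sum, the right-hand side rewrites as
\[
\sum_{\zeta^{\star}}e^{-2\beta}\,\mb P_{\zeta^{\star}}[H_\Xi=H_\xi]\;+\;\sum_{\zeta^{\rm edge}}e^{-3\beta}\,\mb P_{\zeta^{\rm edge}}[H_\Xi=H_\xi]\;,
\]
where the first sum runs over the $8$ one-step configurations $\zeta^{\star}=\zeta^{\star}_{i,j}$ obtained by detaching a corner particle (cost $2$), and the second over the $4(n-2)$ configurations obtained by detaching an edge-interior particle (cost $3$). The edge sum is trivially bounded by $4(n-2)e^{-3\beta}$, which fits inside the claimed error term $e^{-2\beta}\kappa_1$ since $ne^{-\beta}\le Le^{-\beta/2}$ for large $\beta$.

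The heart of the argument is to show that $\mb P_{\zeta^{\star}_{i,j}}[H_\Xi=H_\xi]$ agrees with the limit $\bb M(\zeta^{\star}_{i,j},\xi)$ computed in \cite{bl5}; summing the contributions over $(i,j)$ produces exactly $\mb R(\eta^{\mb x},\xi)$ as defined in \eqref{50}. Starting from $\zeta^{\star}_{i,j}$ the detached particle executes a nearly symmetric rate-one random walk on $\bb T_L$, together with rate-one slidings inside the valley $\mc E^{i,j}_{\mb x}$, while every move that leaves this valley and destroys the reversibility within it has rate at most $e^{-\beta}$. I would introduce the good event $\mc G$ that no such lower-order move fires before the detached particle reattaches to $\partial_+ Q_{\mb x}$. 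On $\mc G$ the trajectory is confined to the valley of $\mc E^{i,j}_{\mb x}$ studied in \cite{bl5}, and the first hit of $\Xi$ is distributed precisely as $\bb M(\zeta^{\star}_{i,j},\cdot)$. It then remains to control $\mb P[\mc G^c]$.

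This bound is the main obstacle. The random walk of the detached particle on $\bb T_L$ returns to $\partial_+Q_{\mb x}$ in expected time at most $C_0 L^2$, while the total rate of lower-order moves in the valley is of order $e^{-\beta}$. A naive Markov estimate would only deliver $L^2 e^{-\beta}$. To obtain the sharper bound $L\,e^{-\beta/2}$, I would truncate at the threshold $T=L\,e^{\beta/2}$ and use a union bound: Markov's inequality on the return time gives $\mb P[H_{\partial_+Q_{\mb x}}>T]\le C_0 L^2/T=C_0 L\,e^{-\beta/2}$, while the probability that a lower-order move fires before time $T$ is bounded by $C_0 T e^{-\beta}=C_0 L\,e^{-\beta/2}$. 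Combining these two estimates yields $\mb P[\mc G^c]\le C_0 L\,e^{-\beta/2}$, and multiplying by the $e^{-2\beta}$ prefactor gives the first bound of the proposition.

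For the second bound it suffices to remark that on $\mc G$ the first visit to $\Xi$ lies in $\mc G_{\mb x}=\{\eta^{\mb x}\}\cup\widehat\Omega^1_{\mb x}$ (the particle either falls back into the missing corner or ends up attached to a side of $Q_{\mb x}$). Hence $\sum_{\xi\notin\mc G_{\mb x}}\mb P_{\zeta^{\star}_{i,j}}[H_\Xi=H_\xi]\le\mb P[\mc G^c]\le C_0 L\,e^{-\beta/2}$, and multiplying by the $e^{-2\beta}$ prefactor, and absorbing the negligible edge contribution, yields $R_\zeta(\eta^{\mb x},\Xi\setminus\mc G_{\mb x})\le e^{-2\beta}\kappa_1$, completing the sketch.
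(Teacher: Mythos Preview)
Your proposal is correct and follows essentially the same route as the paper: the paper reduces to Lemma~\ref{s13}, which decomposes $\mb P_{\eta^{\mb 0}}[H^+_\Xi=H_\xi]$ according to the first jump into the eight corner configurations $\xi^\star$ (plus the negligible $O(ne^{-\beta})$ edge terms), and then invokes Lemma~\ref{s20} to approximate each $\mb P_{\xi^\star_j}[H_\Xi=H_\xi]$ by $\bb M_j(\xi)$; your truncation at $T=Le^{\beta/2}$ is precisely the paper's bound $\int_0^\infty\mb P[H_{\rm hit}>t]\,\alpha e^{-\alpha t}\,dt\le 2\sqrt{\alpha\,\mb E[H_{\rm hit}]}$ with $\alpha\sim e^{-\beta}$ and $\mb E[H_{\rm hit}]\le C_0L^2$.

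One point deserves care: the rate-$1$ dynamics out of $\xi^\star_1=\sigma^{\mb w_0,\mb w_0-e_2}\eta^{\mb 0}$ is not merely ``detached particle performs a random walk until reattachment.'' Two of the six rate-$1$ jumps move a \emph{neighboring} particle into the vacated corner, producing a hole that then slides along the side and may deliver the process to $\mc E^{1,0}_{\mb 0}$ or $\mc E^{3,0}_{\mb 0}$ rather than to some $\mc E^{0,j}_{\mb 0}$; this is the origin of the $\mf q_n$, $\mf r^\pm_n$ terms in $\mb R(\eta^{\mb 0},\cdot)$. The paper treats this by writing two coupled linear equations for $P_1,P_2$ (proof of Lemma~\ref{s20}). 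Your good-event bound still covers these branches, since the hole-sliding excursions last only $O(n^2)$ and are dominated by the $O(L^2)$ random-walk excursion; but you should define $\mc G$ as ``no sub-rate-$1$ jump before $H_\Xi$'' rather than ``before reattachment to $\partial_+Q_{\mb x}$,'' and recognize that on $\mc G$ the landing point can be any $\eta^{k,l}_{\mb x}\in\mc G_{\mb x}$, not only those indexed by the corner you started from.
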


It follows from this proposition that
%
\begin{equation}
\label{53} \frac{1}{R_{\zeta}(\eta^{\mb x}, \xi)} \le \frac{e^{2\beta}} {\mb R (\eta^{\mb x}, \xi)} \biggl\{ 1 +
\frac{\kappa_1}{
\mb R (\eta^{\mb x}, \xi) - \kappa_1} \biggr\}.
\end{equation}

The proof of Proposition \ref{s16} relies on the next two
lemmas. Recall the definition of $\bb M_j(\xi)$ given in \eqref{47}.

\begin{lemma}
\label{s20}
Let $\xi^\star_1 = \sigma^{\mb w_0, \mb w_0-e_2} \eta^{\mb0}$, $\xi
^\star_2 =
\sigma^{\mb w_0, \mb w_0-e_1} \eta^{\mb0}$ and let
\[
P_j (\xi) = \mb P_{\xi^\star_j} [ H_{\Xi} =
H_{\xi} ],\qquad j=1, 2.
\]
Then, there exists a finite constant $C_0$ such that for all
$\xi\in\Xi$
\[
\bigl| P_j(\xi) - \bb M_j(\xi) \bigr| \le C_0 L
e^{-\beta/2}.
\]
\end{lemma}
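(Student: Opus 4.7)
The proof will proceed by coupling the full Kawasaki dynamics $\eta(t)$ starting from $\xi^\star_j$ with an auxiliary process $\widetilde\eta(t)$ in which only the detached particle (initially at $\mb w_0 - e_j$) is allowed to move; every other particle remains frozen in its position from $\xi^\star_j$. The detached particle in $\widetilde\eta(t)$ then performs a continuous-time symmetric nearest-neighbor random walk on $\bb T_L$ until it first contacts $\partial_+ Q$ and attaches, after which the internal valley dynamics --- of depth $e^\beta$ by \cite{bl5} --- carry the configuration into a representative in $\Xi$. By construction and by the explicit formulas \eqref{35}, \eqref{43}, \eqref{44} recorded in the previous section, the law of the first entry of $\widetilde\eta(t)$ into $\Xi$ is precisely $\bb M_j$.

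Synchronize the Poissonian clocks that drive the detached particle and the valley-internal jumps in the two processes; then $\eta(t)$ and $\widetilde\eta(t)$ coincide until the event $\mc F$ that a particle frozen in $\widetilde\eta(t)$ performs a jump in $\eta(t)$. A standard coupling estimate then yields, for every $\xi\in\Xi$,
\begin{equation*}
|P_j(\xi) - \bb M_j(\xi)| \;\le\; 2\,\mb P_{\xi^\star_j}[\mc F]\;,
\end{equation*}
so the problem reduces to showing $\mb P_{\xi^\star_j}[\mc F] \le C_0\, L\, e^{-\beta/2}$. The total rate of bad jumps is $O(n e^{-2\beta})$: only the three surviving corners and the two side-neighbours of the vacated corner $\mb w_0$ can jump at rate $e^{-2\beta}$, while the remaining $O(n)$ side particles jump at rate $\le e^{-3\beta}$. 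Truncating at a threshold time $T$ and combining a Poisson-type estimate on the event $\{\mc F,\, H_\Xi \le T\}$ with the classical 2D-torus hitting bound $\mb E[H_{\partial_+ Q}]\le C_0 L^2$ for $\widetilde\eta(t)$ yields
\begin{equation*}
\mb P_{\xi^\star_j}[\mc F] \;\le\; C_0 n\, T\, e^{-2\beta} \;+\; C_0 (L^2+e^\beta)/T \;,
\end{equation*}
and the advertised bound $C_0 L e^{-\beta/2}$ follows by optimizing $T$ and absorbing inessential factors of $\sqrt{n}$ into the constant.

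The main obstacle is handling the valley phase: once the detached particle attaches inside some $\mc E^{i,k}_{\mb 0}$, the process is not yet at the midpoint representative $\eta^{i,k}_{\mb 0} \in \Xi$, and it wanders inside the valley for an expected time of order $e^\beta$ before either reaching that midpoint or exiting, all the while exposed to bad jumps at rate $O(ne^{-2\beta})$. Controlling this contribution relies on the reversibility identity \eqref{eq:2} for $\mb R$ on $\Xi\setminus\Gamma$ together with a Lemma \ref{s17}-style return-probability estimate applied to the reflected valley dynamics, which shows that each excursion inside the valley contributes only $O(1/n)$ to the error and that the cumulative valley contribution remains within the $Le^{-\beta/2}$ budget. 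Everything else is a routine first-passage and Poisson-clock calculation.
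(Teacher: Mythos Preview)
Your coupling idea is natural, but it rests on a misreading of the local dynamics at $\xi^\star_j$, and this breaks both your identification of $\bb M_j$ and your rate estimate.

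From $\xi^\star_1 = \sigma^{\mb w_0,\mb w_0-e_2}\eta^{\mb 0}$ there are \emph{six} rate-$1$ jumps, not four: besides the four moves of the detached particle, the two particles at $\mb w_0+e_1$ and $\mb w_0+e_2$ can each slide into the vacated corner $\mb w_0$ at rate $1$ (they break two bonds and form two, one with the other side-neighbour and one with the detached particle at $\mb w_0-e_2$). These two moves produce configurations in which a hole sits inside a side of the quasi-square with an attached particle on the outside; the ensuing hole/particle pair evolves as the auxiliary chains $\mb y_t$ and $\mb z_t$ of \eqref{29}--\eqref{30}, and this is precisely the origin of the $\mf q_n$, $\mf r_n$, $\mf r_n^\pm$ terms in the explicit formula for $\bb M_j$ displayed just before Lemma~\ref{s19}. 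Your process $\widetilde\eta(t)$, in which only the detached particle moves, misses these transitions entirely, so its entry law into $\Xi$ is \emph{not} $\bb M_j$. Since these moves occur at rate $1$ and hence with probability $1/3$ on the very first step, no ``bad-jump'' budget can absorb them.

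A second, related error: once the detached particle has wandered away from $\partial_+Q$, the two neighbours of the vacancy at $\mb w_0$ each retain a rate-$e^{-\beta}$ jump (into $\mb w_0$, now breaking two bonds and forming only one), so the total rate of jumps of the remaining particles is of order $e^{-\beta}$, not $n e^{-2\beta}$. This is exactly what the paper uses: with bad rate $\sim e^{-\beta}$ and $\mb E[H_{\partial_+Q}]\le C_0 L^2$, the square-root bound gives $L e^{-\beta/2}$.

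The paper's proof avoids a global coupling and instead performs a one-step decomposition from $\xi^\star_j$, treating the six rate-$1$ destinations case by case (detached random walk, attached sliding, hole propagation via $\mb y_t$ and $\mb z_t$); each case returns either to $\xi^\star_1$, $\xi^\star_2$, or lands in $\Xi$, producing a $2\times 2$ linear system for $P_1(\xi)$, $P_2(\xi)$ with explicit coefficients plus $O(Le^{-\beta/2})$ errors. Solving this system against the corresponding limiting system for $\bb M_1,\bb M_2$ yields the lemma. If you want to salvage a coupling approach, you would have to enlarge $\widetilde\eta(t)$ to allow all rate-$1$ moves of the cluster, at which point it essentially becomes the paper's case analysis.
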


This bound, as well as the next ones, hold for all $\beta$, $L$ and
$K=n^2$. While $\bb M_j(\xi)$ depends only on $L$ and $n$, $P_j(\xi)$
also depends on $\beta$. This estimate will be used to replace a two
step limit in which we first send $\beta\uparrow\infty$, and then $n$,
$L\uparrow\infty$, by a diagonal procedure in which all parameters
diverge simultaneously.

\begin{pf*}{Proof of Lemma \ref{s20}}
We prove this lemma for $\xi= \eta^{0,0}_{\mb0}$ and leave the other
cases to the reader. Set $P_j = P_j(\eta^{0,0}_{\mb0})$, $j=1$, $2$, and
recall the definition of $\mf q_n$, $\mf r^\pm_n$ introduced in
\eqref{29}, \eqref{30}. We claim that
%
\begin{equation}
\label{19} (4 + \mf q_n + \mf r_n - \mf A)
(P_1 + P_2) = 1 + \mf A_{0,3} + \mf
r^-_n + L \varepsilon(\beta/2),
\end{equation}
where $\varepsilon(\beta)$ represents here and below an error term
absolutely bounded by $C_0 e^{-\beta}$.

\begin{figure}

\includegraphics{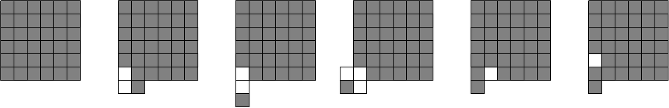}

\caption{The six configurations which can be reached from $\xi^\star_1$ by
a rate $1$ jump. The first two belong to $\bb H_{01}$.}\label{fig88}\vspace*{-5pt}
\end{figure}

To prove \eqref{19}, observe that the configuration $\xi^\star_1$ may jump
at rate $1$ to $6$ configurations, at rate $e^{-\beta}$ to $1$
configuration, at rate $e^{-2\beta}$ to $7$ configurations, and at
rate $e^{-3\beta}$ to $O(n)$ configurations.
Figure \ref{fig88} shows the six configurations which can be attained from
$\xi^\star_1$ with a jump of rate $1$.
Among the configurations
which can be reached at rate $1$, two belong to $\Gamma\cup\bb H_1$,
one of them being $\eta^{\mb0}$. Hence, if we denote by $\mc N_2(\xi
^\star_1)$
the set of the remaining four configurations which can be reached from
$\xi^\star_1$ by a rate $1$ jump, and by $\eta^\star_1 = \sigma^{\mb w_0,
\mb w_0-e_2+e_1} \eta^{\mb0}$ the configuration in $\Gamma\cup\bb
H_1$ which is not $\eta^{\mb0}$, decomposing $\mb P_{\xi^\star_1}  [
H_{\Xi} = H_{\xi}  ]$ according to the first jump we obtain that
%
\begin{equation}
\label{18} 6 \mb P_{\xi^\star_1} [ H_{\Xi} = H_{\xi} ]
= \mb P_{\eta^\star_1} [ H_{\Xi} = H_{\xi} ] + \sum
_{\eta' \in\mc N_2(\xi^\star_1)} \mb P_{\eta'} [ H_{\Xi} =
H_{\xi} ] + \varepsilon(\beta).
\end{equation}

The particle at position $-e_2 + e_1$ of the configuration
$\eta^\star_1= \sigma^{\mb w_0, \mb w_0-e_2+e_1} \eta^{\mb0}$, is
attached to the side $Q^{0,0}_{\mb0}$ of the quasi-square $Q^0 _{\mb
0}$. This particle moves along the side of the square at rate $1$,
while all the other jumps occur at rate at most $e^{-\beta}$. Let
$H_{\mathrm{hit}}$ be the first time the attached particle reaches the
center of the side, and let $H_{\mathrm{mv}}$ be the first time a jump of
rate $e^{-\beta}$ or less occurs. The random time $H_{\mathrm{mv}}$ is
bounded below by a mean $C_0 e^\beta$ exponential random time $\mf
e_\beta$ independent of the lateral displacement of the attached
particle. Therefore,
\[
\mb P_{\eta^\star_1} [ H_{\mathrm{mv}} \le H_{\mathrm{hit}} ] \le \mb
P_{\eta^\star_1} [ \mf e_\beta\le H_{\mathrm{hit}} ] = \int
_0^\infty\mb P_{\eta^\star_1} [ H_{\mathrm{hit}}
\ge t ] \alpha e ^{-\alpha t} \,dt,
\]
where $\alpha^{-1} = C_0 e^\beta$. It is not difficult to bound this
integral by $ 2 \{ \alpha\mb E_{\eta^\star_1} [ H_{\mathrm{hit}}
]\}^{1/2}$. We shall use repeatedly this estimate. Since $\mb
E_{\eta^\star_1} [ H_{\mathrm{hit}} ] \le C_0 n^2$, the last integral is
bounded by $C_0 n e^{-\beta/2}$.

On the event $\{H_{\mathrm{hit}} \le H_{\mathrm{mv}}\}$, $H_{\Xi} = H_{\xi}$,
where $\xi= \eta^{0,0}_{\mb0}$ is the configuration obtained from
$\eta^\star_1$ by moving the particle attached to the side of the
square to the middle position. Therefore,
\[
\mb P_{\eta^\star_1} [ H_{\Xi} = H_{\xi} ] = 1 + n
\varepsilon(\beta/2),
\]
and \eqref{18} becomes
%
\begin{equation}
\label{23} 6 \mb P_{\xi^\star_1} [ H_{\Xi} = H_{\xi} ]
= 1 + \sum_{\eta' \in\mc N_2(\xi^\star_1)} \mb P_{\eta'} [
H_{\Xi} = H_{\xi} ] + n \varepsilon(\beta/2).
\end{equation}

We examine the four configurations of $\mc N_2(\xi^\star_1)$
separately. In two configurations, $\sigma^{\mb w_0, \mb w_0-2e_2}
\eta^{\mb0}$ and $\sigma^{\mb w_0, \mb w_0-e_2-e_1} \eta^{\mb0}$, a
particle is detached from the quasi-square $Q^{0}_{\mb0}$. The
detached particle performs a rate $1$ symmetric random walk on $\bb
T_L$ until it reaches the boundary of the square $Q_{\mb0}$. Among
the remaining particles, two jumps have rate $e^{-\beta}$ and all the
other ones have rate at most $e^{-2\beta}$. Denote by $H_{\mathrm{hit}}$
the time the detached particle hits the outer boundary of the square
$Q_{\mb0}$ and by $H_{\mathrm{mv}}$ the first time a particle on the
quasi-square moves.

As above, $H_{\mathrm{mv}}$ can be bounded below by a mean $C_0 e^\beta$
exponential random variable $\mf e_\beta$ independent of the
displacement of the detached particle. Therefore, for $\eta' =
\sigma^{\mb w_0, \mb w_0-2e_2} \eta^{\mb0}$, $\sigma^{\mb w_0, \mb
w_0-e_2-e_1} \eta^{\mb0}$,
\[
\mb P_{\eta'} [ H_{\mathrm{mv}} \le H_{\mathrm{hit}} ] \le \mb
P_{\eta'} [ \mf e_\beta\le H_{\mathrm{hit}} ] = \int
_0^\infty\mb P_{\eta'} [ H_{\mathrm{hit}}
\ge t ] \alpha e ^{-\alpha t} \,dt,
\]
where $\alpha^{-1} = C_0 e^\beta$. Since, by \cite{lpw1}, Proposition
10.13, $\mb E_{\eta'} [ H_{\mathrm{hit}} ] \le C_0 L^2$, this last
integral is bounded by $C_0 L e^{-\beta/2}$. Hence,
%
\begin{equation}
\label{17} \mb P_{\eta'} [ H_{\Xi} = H_{\xi} ] =
\mb P_{\eta'} [ H_{\Xi} = H_{\xi}, H_{\mathrm{hit}}
\le H_{\mathrm{mv}} ] + L \varepsilon(\beta/2).
\end{equation}
On the event $H_{\mathrm{hit}} \le H_{\mathrm{mv}}$ the configuration $\eta
(H_{\mathrm{hit}})$ is either $\xi^\star_1$, $\xi^\star_2$ or it belongs to~$\bb
H_1$. Therefore, by the strong Markov property, the previous
expression is equal to
\begin{eqnarray*}
&& \sum_{i=1}^2 \mb P_{\eta'}
\bigl[ \eta(H_{\mathrm{hit}} ) = \xi^\star_i \bigr] \mb
P_{\xi^\star_i} [ H_{\Xi} = H_{\xi} ]
\\
& &\qquad{}+ \sum_{j=0}^3 \mb E_{\eta'}
\bigl[ \mb1\bigl\{ \eta(H_{\mathrm{hit}} ) \in \mc E^{0,j}_{\mb0}
\bigr\} \mb P_{\eta(H_{\mathrm{hit}} )} [ H_{\Xi} = H_{\xi} ] \bigr] +
L \varepsilon(\beta/2).
\end{eqnarray*}

Once the detached particle hits the side of the quasi-square, as we
have seen above, it attains the middle position of the side before
anything else happens with a probability close to $1$. Since $\xi=
\eta^{0,0}_{\mb0}$, the previous sum over $j$ is equal to
\[
\mb P_{\eta'} \bigl[ \eta(H_{\mathrm{hit}} ) \in \mc
E^{0,0}_{\mb0} \bigr] + n \varepsilon(\beta/2).
\]
By definition \eqref{26} of $\mf p$, the contribution of the terms
$\eta' = \sigma^{\mb w_0, \mb w_0-2e_2} \eta^{\mb0}$ and $\eta' =
\sigma^{\mb w_0, \mb w_0-e_2-e_1} \eta^{\mb0}$ to the sum appearing
on the right-hand side of \eqref{23} is
%
\begin{eqnarray}
\label{25} && \bigl\{ \mf p(\mb w_0 - 2e_2, \mb
w_0 - e_2) + \mf p(\mb w_0 -
e_1 - e_2, \mb w_0 - e_2) \bigr
\} \mb P_{\xi^\star_1} [ H_{\Xi} = H_{\xi} ]
\nonumber
\\
&&\qquad{} + \bigl\{ \mf p(\mb w_0 - 2e_2, \mb w_0
- e_1) + \mf p(\mb w_0 - e_1 -
e_2, \mb w_0 - e_1) \bigr\} \mb
P_{\xi^\star_2} [ H_{\Xi} = H_{\xi} ]
\\
&&\qquad{} + \mf p\bigl(\mb w_0 - 2e_2, Q^{0,0}_{\mb0}
\bigr) + \mf p\bigl(\mb w_0 - e_1-e_2,
Q^{0,0}_{\mb0}\bigr) + L \varepsilon(\beta/2).\nonumber
\end{eqnarray}
In this equation, we have replaced $n \varepsilon(\beta/2)$ by $L
\varepsilon(\beta/2)$ because $n\le2 L$.

It remains to analyze the two configurations of $\mc N_2(\eta)$, $\eta' =
\sigma^{\mb w_0+e_2, \mb w_0-e_2} \eta^{\mb0}$ and $\eta' =
\sigma^{\mb w_0+e_1, \mb w_0-e_2} \eta^{\mb0}$. In the first one, if
we denote by $\mb z^1_t$ the horizontal position of the particle
attached to the side $Q^0$ of the square $Q$ and by $\mb z^2_t$ the
vertical position of the hole on the side $Q^3$, it is not difficult
to check that $(\mb z^1_t, \mb z^2_t)$ evolves as the Markov chain
described just before \eqref{30} with initial condition $(\mb z^1_0,
\mb z^2_0) = (0,1)$.

Denote by $H_{\mathrm{hit}}$ the time the hole hits $0$ or $n-1$ and by
$H_{\mathrm{mv}}$ the first time a jump of rate $e^{-\beta}$ or less
occurs. The arguments presented above in this proof show that the
probability $\mb P_{\eta'} [H_{\mathrm{mv}} \le H_{\mathrm{hit}}]$ is bounded by
$ n \varepsilon(\beta/2)$. Hence, \eqref{17} holds with this new
meaning for $H_{\mathrm{mv}}$, $H_{\mathrm{hit}}$, and with $n$ in place of
$L$. On the event $H_{\mathrm{hit}} \le H_{\mathrm{mv}}$ three situations can
occur. If the process $(\mb z^1_t, \mb z^2_t)$ reached $(0,0)$
(resp., $E^+_n$, $E^-_n$), the process $\eta(t)$ returned to the
configuration $\xi^\star_1$ (resp., hit a configuration in $\mc
E^{3,0}_{\mb0}$, $\mc E^{0,0}_{\mb0}$). At this point, we may repeat
the previous arguments to replace the set $\mc E^{3,0}_{\mb0}$, $\mc
E^{0,0}_{\mb0}$ by the configurations $\eta^{3,0}_{\mb0}$,
$\eta^{0,0}_{\mb0}$, respectively. Hence, by definition of $\mf
r^\pm_n$,
\[
\mb P_{\eta'} [ H_{\Xi} = H_{\xi} ] = \mf
r^-_n + (1-\mf r_n) \mb P_{\xi^\star_1} [
H_{\Xi} = H_{\xi} ] + n \varepsilon (\beta/2)
\]
for $\eta' = \sigma^{\mb w_0+e_2, \mb w_0-e_2} \eta^{\mb0}$,
$\xi=\eta^{0,0}_{\mb0}$.

Assume now that $\eta' = \sigma^{\mb w_0+e_1, \mb w_0-e_2} \eta^{\mb
0}$. In this case, if we denote by $\mb y^1$ the horizontal
position of the particle attached to the side of the quasi-square and
by $\mb y^2$ the horizontal position of the hole, the pair $(\mb y^1_t, \mb y^2_t)$ evolve according to the Markov process introduced just
before \eqref{29} with initial condition $(\mb y^1_0, \mb y^2_0) = (0, 1)$. Denote by $H_{\mathrm{hit}}$ the time the hole hits $0$ or $n-1$
and by $H_{\mathrm{mv}}$ the first time a jump of rate $e^{-\beta}$ or less
occurs. The arguments presented above in this proof show that the
probability $\mb P_{\eta'} [H_{\mathrm{mv}} \le H_{\mathrm{hit}}]$ is bounded by
$ n \varepsilon(\beta/2)$. Hence, \eqref{17} holds with this new
meaning for $H_{\mathrm{mv}}$, $H_{\mathrm{hit}}$, and with $n$ in place of
$L$. On the event $H_{\mathrm{hit}} \le H_{\mathrm{mv}}$, if $\mb y^2_{H_{\mathrm{hit}}}=0$, at time $H_{\mathrm{hit}}$ the process $\eta(t)$ has returned
to the configuration $\xi^\star_1$, while if $\mb y^2_{H_{\mathrm{hit}}}=n-1$,
at time $H_{\mathrm{hit}}$ the process $\eta(t)$ has reached a
configuration in $\mc E^{1,0}_{\mb0}$. At this point, we may repeat
the previous arguments to replace the set $\mc E^{1,0}_{\mb0}$ by the
configurations $\eta^{1,0}_{\mb0}$. Since the random walk reaches
$n-1$ before $0$ with probability $\mf q_n$, by the strong Markov
property
\[
\mb P_{\eta'} [ H_{\Xi} = H_{\xi} ] = (1-\mf
q_n) \mb P_{\xi^\star_1} [ H_{\Xi} = H_{\xi}
] + n \varepsilon (\beta/2)
\]
for $\eta' = \sigma^{\mb w_0+e_1, \mb w_0-e_2} \eta^{\mb0}$,
$\xi=\eta^{0,0}_{\mb0}$.

Therefore, the contribution of the last two configurations of
$\mc N_2(\xi^\star_1)$ to the sum on the right-hand side of \eqref{23} is
%
\begin{equation}
\label{24} \mf r^-_n + ( 2 - \mf r_n - \mf
q_n ) \mb P_{\xi^\star_1} [ H_{\Xi} = H_{\xi}
] + n \varepsilon (\beta/2).
\end{equation}

Equations \eqref{23}, \eqref{25} and \eqref{24} yield a linear
equation for $P_1=\mb P_{\xi^\star_1} [ H_{\Xi} = H_{\xi} ]$ in terms of
$P_1$ and $P_2=\mb P_{\xi^\star_2} [ H_{\Xi} = H_{\xi} ]$. Analogous
arguments provide a similar equation for $P_2$ in terms of $P_1$ and
$P_2$. Adding these two equations, we obtain~\eqref{19}, while
subtracting them gives an expression for the difference $P_1-P_2$. The
assertion of the lemma follows from these equations for $P_1+P_2$ and
$P_1-P_2$.
\end{pf*}

\begin{lemma}
\label{s13}
We have that
\begin{eqnarray*}
 8 (4 + \mf q_n + \mf r_n - \mf A) \mb
P_{\eta^{\mb0}} \bigl[ H^+_{\Xi} = H_{\eta^{0,0}_{\mb0}} \bigr]& = &1 + \mf
A_{0,3} + \mf r^-_n + \mf q_n +
\kappa_1,
\\
 8 (4 + \mf q_n + \mf r_n - \mf A) \mb
P_{\eta^{\mb0}} \bigl[ H^+_{\Xi} = H_{\eta^{0,1}_{\mb0}} \bigr] &= &\mf
A_{1,2} + \mf r^+_n + \kappa_1,
\\
(4 + \mf q_n + \mf r_n - \mf A) \mb
P_{\eta^{\mb0}} \bigl[ H^+_{\Xi} = H_{\eta^{\mb0}} \bigr] &=& 1 +
\kappa_1,
\end{eqnarray*}
where $\kappa_1$ is a remainder absolutely bounded by $C_0 L
e^{-\beta/2}$.
\end{lemma}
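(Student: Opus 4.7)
The plan is to condition on the first jump of $\eta(\cdot)$ from $\eta^{\mb 0}$, apply Lemma \ref{s20} to the ``same-corner'' hitting probabilities, and treat the ``cross-corner'' contributions by a one-dimensional hole-sliding analysis parallel to that of Lemma \ref{s20}. The holding rate at $\eta^{\mb 0}$ is $\lambda(\eta^{\mb 0})=8e^{-2\beta}+4(n-2)e^{-3\beta}$ (eight corner-detachments of rate $e^{-2\beta}$ and $4(n-2)$ side-detachments of rate $e^{-3\beta}$), and every first jump lands in $\Delta_1$. Hence by the strong Markov property,
\[
8\,\mb P_{\eta^{\mb 0}}[H^+_\Xi = H_\xi] \;=\; \sum_{k=0}^{3}\sum_{j=1}^{2} \mb P_{\xi^{\star(k)}_j}[H_\Xi = H_\xi] \;+\; O(Le^{-\beta/2}),
\]
where $\xi^{\star(k)}_j$ denotes the detachment of corner $\mb w_k$ in direction $j$ (so $\xi^{\star(0)}_j=\xi^\star_j$); the error absorbs the $O(ne^{-\beta})$ mismatch between $\lambda(\eta^{\mb 0})$ and $8e^{-2\beta}$ and the total weight of rate-$e^{-3\beta}$ first jumps, using $n\le L$.

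For $k=0$, Lemma \ref{s20} gives $\mb P_{\xi^\star_1}[H_\Xi=H_\xi]+\mb P_{\xi^\star_2}[H_\Xi=H_\xi] = (\bb M_1+\bb M_2)(\xi)+O(Le^{-\beta/2})$. For $k\neq 0$, I would repeat the strategy of Lemma \ref{s20}: all rate-$e^{-\beta}$ moves are discarded by the exponential martingale bound $\mb P[\mf e_\beta\le H_{\rm hit}]\le 2\sqrt{\alpha\mb E[H_{\rm hit}]}$ (with $\alpha^{-1}=C_0 e^{\beta}$), so that the remaining rate-$1$ dynamics couples the detached particle's walk on $\bb T_L$ with a one-dimensional hole-walk. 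Since a corner-hole has no rate-$1$ slides out, the hole from $\xi^{\star(1)}_1$ performs a symmetric walk on the path $\{\mb w_0,(1,0),\dots,\mb w_1,(n-1,1)\}$ with $\mb w_0$ absorbing; this is precisely the Markov chain $(\mb y_t)$ defined just above \eqref{29}, so the hole reaches $\mb w_0$ before returning to $\mb w_1$ with probability $\mf q_n+O(Le^{-\beta/2})$. Conditional on absorption, the detached particle has had time to attach to the bottom side, so the process enters $\eta^{0,0}_{\mb 0}$ with probability close to $1$. Thus $\xi^{\star(1)}_1$ contributes $\mf q_n+O(Le^{-\beta/2})$ to $\sum_{k,j}\mb P_{\xi^{\star(k)}_j}[H_\Xi=H_{\eta^{0,0}_{\mb 0}}]$, and a symmetric contribution comes from $\xi^{\star(3)}_2$, while $\xi^{\star(2)}_j$ is negligible because the hole would have to cross an intermediate corner.

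Combining, $\sum_{k,j}\mb P_{\xi^{\star(k)}_j}[H_\Xi=H_{\eta^{0,0}_{\mb 0}}]=(\bb M_1+\bb M_2)(\eta^{0,0}_{\mb 0})+\mf q_n/(4+\mf q_n+\mf r_n-\mf A)+O(Le^{-\beta/2})$, and the first identity follows by multiplying by $(4+\mf q_n+\mf r_n-\mf A)$ and invoking the explicit form of $\bb M_1+\bb M_2$ recorded after \eqref{47}. The second identity is analogous with the hole-slide from $\mb w_1$ matched instead to the Markov chain $(\mb z_t)$ of \eqref{30}, producing $\mf r^+_n$ in the numerator. The third identity follows from $4$-fold corner symmetry: each of the four corners contributes $(\bb M_1+\bb M_2)(\eta^{\mb 0})=2/(4+\mf q_n+\mf r_n-\mf A)$ (from \cite{bl5}) to the sum, yielding $8/(4+\mf q_n+\mf r_n-\mf A)+O(Le^{-\beta/2})$; dividing by $8$ and multiplying by $(4+\mf q_n+\mf r_n-\mf A)$ gives $1+\kappa_1$. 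The main obstacle is the identification of the reduced hole-walk with exactly the Markov chain $(\mb y_t)$ or $(\mb z_t)$ of \eqref{29}--\eqref{30}, including correct boundary behaviour at corner holes, together with uniform verification that the product of the $O(n^2)$ relaxation time of the hole-walk and the rate-$e^{-\beta}$ escape rate remains $O(Le^{-\beta/2})$.
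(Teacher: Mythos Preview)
Your first-jump decomposition and the appeal to Lemma~\ref{s20} for the $k=0$ corner are exactly what the paper does. The paper's proof is then one line: since Lemma~\ref{s20} is stated for \emph{all} $\xi\in\Xi$, the symmetries of the square immediately give $\mb P_\eta[H_\Xi=H_\xi]$ for every $\eta\in\mc N(\eta^{\mb 0})$, and one simply sums the eight values.

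Your separate ``hole-sliding'' analysis for $k\neq 0$ is unnecessary and, as written, incorrect in several places. First, the claim that the hole from $\xi^{\star(1)}_1$ performs a one-dimensional walk on a path and is absorbed at $\mb w_0$ with probability $\mf q_n$ misdescribes the dynamics: by the vertical reflection sending $\mb w_1\to\mb w_0$, the analysis of $\xi^{\star(1)}_j$ is identical to that of $\xi^\star_j$, so the reduced chain is the same two-dimensional object (detached particle on $\partial_+Q$ coupled to a sliding hole) appearing in the proof of Lemma~\ref{s20}, and the contribution $\mf q_n/(4+\mf q_n+\mf r_n-\mf A)$ emerges only after solving the linear system for $(P_1+P_2)(\eta^{1,0}_{\mb 0})$, not from a gambler's-ruin probability. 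Second, the assertion that ``conditional on absorption the detached particle has attached to the bottom side'' is false: the particle may attach to any side, which is precisely why the full system must be solved. Third, your accounting is inconsistent: you assert both that $\xi^{\star(1)}_1$ contributes $\mf q_n$ and that $\xi^{\star(3)}_2$ gives a ``symmetric contribution'', yet your final cross-corner total is only $\mf q_n/(4+\mf q_n+\mf r_n-\mf A)$. In fact the entire $\mf q_n$ term comes from corner~$1$ alone, via $(P_1+P_2)(\eta^{1,0}_{\mb 0})=\mf q_n/(4+\mf q_n+\mf r_n-\mf A)+O(Le^{-\beta/2})$; corners~$2$ and~$3$ contribute zero to leading order, since the linear system of Lemma~\ref{s20} gives $(P_1+P_2)(\eta^{2,2}_{\mb 0})=(P_1+P_2)(\eta^{3,2}_{\mb 0})=O(Le^{-\beta/2})$.
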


Note that using the symmetry of the model we can deduce from this
result the values of $\mb P_{\eta^{\mb0}} [ H^+_{\Xi} =
H_{\eta^{i,j}_{\mb0}}]$ for all $0\le i,j\le3$. For example,
\begin{eqnarray*}
\mb P_{\eta^{\mb0}} \bigl[ H^+_{\Xi} &=& H_{\eta^{0,3}_{\mb0}} \bigr] =
\mb P_{\eta^{\mb0}} \bigl[ H^+_{\Xi} = H_{\eta^{0,0}_{\mb0}} \bigr],
\\
\mb P_{\eta^{\mb0}} \bigl[ H^+_{\Xi}& =& H_{\eta^{0,2}_{\mb0}} \bigr] =
\mb P_{\eta^{\mb0}} \bigl[ H^+_{\Xi} = H_{\eta^{0,1}_{\mb0}} \bigr].
\end{eqnarray*}
Moreover, summing over all configurations in $\mc G_{\mb0}$, we get
that
\[
\mb P_{\eta^{\mb0}} \bigl[ H^+_{\Xi} \neq H_{\mc G_{\mb0}} \bigr] =
\kappa_1.
\]

\begin{pf*}{Proof of Lemma \ref{s13}}
We prove the lemma for $\xi= \eta^{0,0}_{\mb0}$. Decomposing the
event $\{H^+_{\Xi} = H_{\xi}\}$ according to the first jump of the
process $\eta(t)$ we obtain that
\[
\mb P_{\eta^{\mb0}} \bigl[ H^+_{\Xi} = H_{\xi}\bigr] =
\frac{1}8 \sum_{\eta\in\mc N(\eta^{\mb0})} \mb P_{\eta} [
H_{\Xi} = H_{\xi} ] + n \varepsilon(\beta),
\]
where $\mc N(\eta^{\mb0})$ is the set of $8$ configurations which can be
obtained from $\eta^{\mb0}$ by a jump of rate $e^{-2\beta}$. Lemma
\ref{s20} provides a formula $\mb P_{\eta} [ H_{\Xi} = H_{\xi} ]$
for each $\eta$ in $\mc N(\eta^{\mb0})$, which concludes the proof of the
lemma.
\end{pf*}

\begin{corollary}
\label{s12}
There exists a finite constant $C_0$ such that
\begin{eqnarray*}
 p_{\zeta}\bigl(\eta^{\mb0}, \eta^{0,0}_{\mb0}
\bigr) &=& \frac{1}8 \frac{1 + \mf A_{0,3} + \mf r^-_n + \mf q_n}{
1 + \mf A_{0,3} + \mf A_{1,2} + \mf r_n
+ \mf q_n} + \kappa_1,
\\
 p_{\zeta}\bigl(\eta^{\mb0}, \eta^{0,1}_{\mb0}
\bigr) &=& \frac{1}8 \frac{\mf A_{1,2} + \mf r^+_n}{
1 + \mf A_{0,3} + \mf A_{1,2} + \mf r_n
+ \mf q_n} + \kappa_1,
\end{eqnarray*}
where $\kappa_1$ is a remainder absolutely bounded by $C_0 L
e^{-\beta/2}$.
\end{corollary}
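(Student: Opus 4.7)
I would prove Corollary \ref{s12} by combining the representation \eqref{03} for $R_\zeta$ (from the proof of Lemma \ref{s02}) with the explicit formulas of Lemma \ref{s13}. Since $R_\zeta(\eta^{\mb 0}, \xi) = \lambda(\eta^{\mb 0})\, \mb P_{\eta^{\mb 0}}[H^+_\Xi = H_\xi]$ for $\xi \in \Xi\setminus\{\eta^{\mb 0}\}$ and $\sum_{\xi \in \Xi} \mb P_{\eta^{\mb 0}}[H^+_\Xi = H_\xi] = 1$, the holding rate equals $\lambda_\zeta(\eta^{\mb 0}) = \lambda(\eta^{\mb 0})\,(1 - \mb P_{\eta^{\mb 0}}[H^+_\Xi = H_{\eta^{\mb 0}}])$, so the embedded-chain transition probability is
$$p_\zeta(\eta^{\mb 0}, \xi) \;=\; \frac{\mb P_{\eta^{\mb 0}}[H^+_\Xi = H_\xi]}{1 - \mb P_{\eta^{\mb 0}}[H^+_\Xi = H_{\eta^{\mb 0}}]}\,.$$

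Plugging in the three expressions from Lemma \ref{s13}, the common factor $4 + \mf q_n + \mf r_n - \mf A$ cancels between numerator and denominator, giving
$$p_\zeta(\eta^{\mb 0}, \eta^{0,0}_{\mb 0}) \;=\; \frac{1 + \mf A_{0,3} + \mf r^-_n + \mf q_n + \kappa_1}{8\,(3 + \mf q_n + \mf r_n - \mf A - \kappa_1)}$$
and the analogous formula with numerator $\mf A_{1,2} + \mf r^+_n + \kappa_1$ for $\xi = \eta^{0,1}_{\mb 0}$. To recover the stated denominator I use the identity $\mf A + \mf A_{0,3} + \mf A_{1,2} = 2$, which holds because the four sets $Q^{0,j}_{\mb 0}$, $0\le j\le 3$, cover $\partial_+ Q$ except for the two sites $\mb w_0 - e_1 = (-1,0)$ and $\mb w_0 - e_2 = (0,-1)$; hence any nearest-neighbor random walk started at $\mb w_0 - 2e_2$ or $\mb w_0 - e_1 - e_2$ (both outside $Q$) hits the disjoint union $\{\mb w_0-e_1,\mb w_0-e_2\} \cup \bigcup_j Q^{0,j}_{\mb 0} = \partial_+ Q$ with total probability one. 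Substituting $-\mf A = \mf A_{0,3} + \mf A_{1,2} - 2$ converts $3 + \mf q_n + \mf r_n - \mf A$ into $1 + \mf A_{0,3} + \mf A_{1,2} + \mf r_n + \mf q_n$.

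The last step is to move the $\kappa_1$ perturbations from inside the ratio to an additive remainder. The simplified denominator is bounded below by $1$, while both numerator and denominator are uniformly $O(1)$ in $\beta$ (each is bounded by $5$), so the elementary expansion $(N+\kappa_1)/(D-\kappa_1) = N/D + O(\kappa_1)$ preserves the $Le^{-\beta/2}$ order of the error and yields the statement of the corollary. There is no substantial obstacle here: the proof is essentially a bookkeeping consequence of Lemma \ref{s13}, with the harmonic identity $\mf A + \mf A_{0,3} + \mf A_{1,2} = 2$ being the single observation required to match the target form.
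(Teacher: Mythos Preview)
Your proof is correct and follows essentially the same route as the paper: both derive the representation
\[
p_\zeta(\eta^{\mb 0},\xi)\;=\;\frac{\mb P_{\eta^{\mb 0}}[H^+_\Xi=H_\xi]}{1-\mb P_{\eta^{\mb 0}}[H^+_\Xi=H_{\eta^{\mb 0}}]}
\]
and then plug in Lemma~\ref{s13}. The paper obtains this formula via the strong Markov property applied to $\mb P_{\eta^{\mb 0}}[H_{\Xi\setminus\{\eta^{\mb 0}\}}=H_\xi]$, whereas you compute $\lambda_\zeta(\eta^{\mb 0})$ directly from \eqref{03}; the two derivations are equivalent. Your proof is in fact more explicit than the paper's, which simply says ``recall the statement of Lemma~\ref{s13}'' and leaves to the reader both the harmonic identity $\mf A+\mf A_{0,3}+\mf A_{1,2}=2$ and the extraction of the additive $\kappa_1$ remainder.
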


Here again, using the symmetry of the model, we can deduce from this
result the values of $p_{\zeta}(\eta^{\mb x}, \eta^{i,j}_{\mb x})$
for all $0\le i,j\le3$, $\mb x\in\bb T_L$. Moreover, it follows
from this result that
%
\begin{equation}
\label{31} \sum_{\xi\notin\mc V(\eta^{\mb x})} p_{\zeta}\bigl(
\eta^{\mb x}, \xi\bigr) \le \kappa_1.
\end{equation}

\begin{pf*}{Proof of Corollary \ref{s12}}
By the displayed equation after (6.9) in \cite{bl2},
\[
p_{\zeta}\bigl(\eta^{\mb0}, \xi\bigr) = \mb P_{\eta^{\mb0}} [
H_{\Xi\setminus\{\eta^{\mb0}\}} = H_{\xi} ].
\]
Intersecting the previous event with the partition $\{H^+_{\Xi} =
H^+_{\{\eta^{\mb0}\}}\}$, $\{H^+_{\Xi} < H^+_{\{\eta^{\mb0}\}}\}$, we
obtain by the strong Markov property that
\[
\mb P_{\eta^{\mb0}} [ H_{\Xi\setminus\{\eta^{\mb0}\}} = H_{\xi} ] =
\frac{\mb P_{\eta^{\mb0}}  [ H^+_{\Xi} =
H_{\xi}  ]}{\mb P_{\eta^{\mb0}}  [ H^+_{\Xi} \neq
H^+_{\eta^{\mb0}}  ]} \cdot
\]
It remains to recall the statement of Lemma \ref{s13} to conclude the
proof of the corollary.
\end{pf*}

\begin{pf*}{Proof of Proposition \ref{s16}}
Since $R_{\zeta}(\eta^{\mb0}, \xi) = \lambda(\eta^{\mb0}) \mb
P_{\eta^{\mb0}}  [ H^+_{\Xi} = H_{\xi}  ]$ and since $\lambda
(\eta^{\mb0}) = 8 e^{-2\beta} (1+ n e^{-\beta})$, the assertion
follows from Lemma \ref{s13}.
\end{pf*}

\subsection*{The jump rates of $\zeta(t)$ on $\Xi\setminus\Gamma$}
We examine in this subsection the jump rates of order $e^{-\beta}$ of
the trace process $\zeta(t)$. Let
\[
\kappa_2 = n^4 e^{-\beta} + n L e^{-(1/2)\beta}.
\]

\begin{proposition}
\label{s08}
For $\eta\in\Xi\setminus\Gamma$, let $\mc V(\eta) = \{\xi\in\Xi\dvtx
\mb R(\eta,\xi)>0\}$. There exists a finite constant $C_0$ such that
for all $\eta\in\Xi\setminus\Gamma$,
\begin{eqnarray*}
 \max_{\xi\in\mc V(\eta)} \bigl| R_\zeta(\eta, \xi) -
e^{-\beta} \mb R(\eta,\xi) \bigr| &\le&C_0 e^{-\beta}
\kappa_2,
\\
\max_{\eta\in\Xi\setminus\Gamma} R_\zeta\bigl(\eta, \mc V(
\eta)^c\bigr)& \le& C_0 e^{-\beta}
\kappa_2.
\end{eqnarray*}
\end{proposition}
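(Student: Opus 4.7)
The plan is to parallel the proof of Proposition \ref{s16}, now with the $e^{-2\beta}$ escape from $\Gamma$ replaced by the $e^{-\beta}$ escape from $\Xi \setminus \Gamma$ into $\Delta_1$. Starting from identity \eqref{03}, $R_\zeta(\eta,\xi) = \lambda(\eta)\, \mb P_\eta[H^+_\Xi = H_\xi]$, I would decompose the excursion of $\eta(t)$ from $\eta$ up to the first return to $\Xi$ into three stages: \textbf{(a)} the rate-$1$ dynamics inside the well $\mc E(\eta)$ of configurations in $\bb H_1$ reachable from $\eta$ without crossing $\Delta_1$; \textbf{(b)} a first jump into $\Delta_1$, landing in some $\zeta \in \mc N(\mc E(\eta)) \subset \bb H_2$ at rate $e^{-\beta}$; \textbf{(c)} the subsequent relaxation back to a configuration in $\bb H_{01}\cap \Xi$, whose distribution is governed by $\bb M(\zeta,\cdot)$ modulo an error.

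For \textbf{(a)}, the well is $\mc E^{i,j}_{\mb x}$ or $\mc E^{\mf a,j}_{\mb x}$ when $\eta$ is one of the representatives $\eta^{i,j}_{\mb x}$ or $\zeta^{\mf a,j}_{\mb x}$, and reduces to the singleton $\{\eta\}$ for $\eta \in \Omega^2 \cup \Omega^4$: in the latter case the constraint $M_i \geq 2$ forces any sliding of a boundary particle to strictly increase the energy, so only rate-$e^{-\beta}$ moves are available. Because the rate-$1$ chain on $\mc E(\eta)$ has mixing time at most $Cn^2$ while the total escape rate to $\Delta_1$ is at most $Cne^{-\beta}$, the configuration from which escape occurs is, up to error $O(n^3 e^{-\beta})$, uniformly distributed on $\mc E(\eta)$; this reproduces the unweighted sum over $\zeta$ appearing in the definition of $\mb R(\eta,\cdot)$.

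For \textbf{(c)}, I would repeat the stopping-time argument of Lemma \ref{s20}: starting from $\zeta \in \mc N(\mc E(\eta))$, the return to $\bb H_{01}$ is driven by a detached or sliding particle undergoing a symmetric random walk, with hitting time $H_{\rm hit}$ bounded by $CL^2$ in the detached case and by $Cn^2$ in the sliding case, while any competing low-rate event fires at rate at most $e^{-\beta}$. Comparing $H_{\rm hit}$ to an independent mean-$e^{\beta}$ exponential $\mf e_\beta$ via $\mb P[\mf e_\beta \leq H_{\rm hit}] \leq 2(\alpha \mb E[H_{\rm hit}])^{1/2}$ with $\alpha^{-1}\asymp e^{\beta}$ yields an error of order $Le^{-\beta/2}$ per transition; summed over the $O(n)$ configurations of $\mc N(\mc E(\eta))$ contributing to a given $\xi$ this produces the $nLe^{-\beta/2}$ piece of $\kappa_2$. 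The $n^4 e^{-\beta}$ piece collects the secondary corrections from the $O(n^2)$ competing $e^{-2\beta}$ jumps that can fire during an excursion of length $O(n^2)$, for each of similarly many first-escape configurations.

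Gluing \textbf{(a)}--\textbf{(c)} together by the strong Markov property recovers $e^{-\beta}\sum_{\zeta\in\mc N(\mc E(\eta))}\bb M(\zeta,\xi) = e^{-\beta}\mb R(\eta,\xi)$ with total error $C_0 e^{-\beta}\kappa_2$. The second bound on $R_\zeta(\eta,\mc V(\eta)^c)$ then falls out of the same decomposition: by definition any $\xi\in \mc V(\eta)^c$ satisfies $\bb M(\zeta,\xi)=0$ for every $\zeta \in \mc N(\mc E(\eta))$, so the only contribution to $R_\zeta(\eta,\xi)$ must come from trajectories violating the three-stage scheme, and these are controlled by the same error estimates. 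The main obstacle is the case analysis: each of the four families $\widehat\Omega^1, \Omega^2, \widehat\Omega^3, \Omega^4$ requires identifying $\mc N(\mc E(\eta))$ explicitly and checking that the $\beta\to\infty$ limit of $\mb P_\zeta[H_{\bb H_{01}} = H_\xi]$ matches the explicit formulas for $\bb M$ from \cite{bl5} with the stated error rate; the balancing of the excursion scale $L^2$ against the low-rate scale $e^{\beta}$ through the square-root trick is the only non-bookkeeping step.
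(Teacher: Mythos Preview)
Your plan is essentially the paper's own proof. The paper packages your stages (a)--(b) into the single identity \eqref{07} of Lemma~\ref{s02},
\[
R_\zeta(\eta,\xi)=\frac{\Cap_K(\eta,\Delta_1)}{\mu_K(\eta)}\sum_{\zeta\in\Delta_1}\mb P_\eta[H_{\Delta_1}=H_\zeta]\,\mb P_\zeta[H_\Xi=H_\xi],
\]
and then estimates the three factors separately, running the case analysis as four lemmas (one per family $\widehat\Omega^1,\Omega^2,\widehat\Omega^3,\Omega^4$). One technical difference: for stage~(a) you invoke a mixing-time argument on $\mc E(\eta)$ to conclude that the escape point is approximately uniform on $\mc N(\mc E(\eta))$, whereas the paper writes down and solves the linear system for $V(\zeta_j)=\mb P_{\zeta_j}[H_\zeta=H_{\Delta_1}]$ directly (see \eqref{10}); this yields the sharper per-site error $C_0 n^2 e^{-\beta}$ for the hitting distribution and $C_0 n^3 e^{-2\beta}$ for $\mu_K(\eta)^{-1}\Cap_K(\eta,\Delta_1)$, which after summing over $|\mc N|=O(n)$ produces the $n^4 e^{-\beta}$ piece of $\kappa_2$. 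Your mixing-time heuristic would work too but the accounting of the $n^4 e^{-\beta}$ term in your sketch is a bit off --- it does not come from competing $e^{-2\beta}$ jumps but from these equilibration errors in the capacity and hitting distribution. Stage~(c) and the square-root trick are exactly as in the paper.
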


It follows from this result and some simple algebra that there exists
a finite constant $C_0$ such that for all $\eta\in\Xi\setminus
\Gamma$, $\xi\in\Xi$,
%
\begin{equation}
\label{41} \frac{1}{R_\zeta(\eta, \xi)} \le \frac{e^{\beta}}{
\mb R(\eta,\xi)} \biggl\{ 1 +
\frac{
C_0 \kappa_2}{\mb R(\eta,\xi)
- C_0 \kappa_2} \biggr\}.
\end{equation}

The proof of Proposition \ref{s08} is divided in several lemmas.

\begin{lemma}
\label{s03}
There exists a finite constant $C_0$ such that for all $\mb
x\in\bb T_L$, $0\le i, j\le3$,
\begin{eqnarray*}
 \max_{\xi\in\mc V(\eta^{i,j}_{\mb x})}\biggl | R_\zeta\bigl(\eta^{i,j}_{\mb x},
\xi\bigr) - e^{-\beta} \sum_{\zeta\in\mc N(\mc E^{i,j}_{\mb x})} \bb M (
\zeta, \xi)\biggr | &\le& C_0 e^{-\beta} \kappa_2,
\\
 R_\zeta\bigl(\eta^{i,j}_{\mb x}, \mc V\bigl(
\eta^{i,j}_{\mb x}\bigr)^c\bigr) &\le& C_0
e^{-\beta} \kappa_2.
\end{eqnarray*}
\end{lemma}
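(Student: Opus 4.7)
Fix $i=j=0$, $\mb x=\mb 0$ and write $\eta := \eta^{0,0}_{\mb 0}$; the other cases follow by symmetry. The strategy parallels Lemma \ref{s20}, now with the process starting inside the depth-$e^\beta$ well $\mc E^{0,0}_{\mb 0}$ rather than in the ground-state set $\Gamma$. From the identity $R_\zeta(\eta,\xi) = \lambda(\eta)\, \mb P_\eta[H^+_\Xi = H_\xi]$, I track the trajectory of $\eta(t)$ starting at $\eta$. Within $\mc E^{0,0}_{\mb 0}$ (which contains no element of $\Xi$ other than $\eta$ itself), the attached particle at $\mb w_0^{\star}$ performs a rate-$1$ symmetric random walk along the side $Q^{0,0}_{\mb 0}$ of length $n-1$, while at total rate $O(1)$ there are rate-$e^{-\beta}$ escapes into $\mc N(\mc E^{0,0}_{\mb 0})\subset\Delta_1$ (either the attached particle detaches, or one of the two particles adjacent to the missing corner fills it). All remaining escapes have rate $e^{-2\beta}$ or smaller.

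The first step would be to show that with probability $1 - O(n^4 e^{-\beta})$ the escape from $\mc E^{0,0}_{\mb 0}$ is via a rate-$e^{-\beta}$ jump and, after entering some $\zeta \in \mc N(\mc E^{0,0}_{\mb 0})$, the process descends into $\bb H_{01}$ at a configuration from which the eventual trace hit in $\Xi$ lies in the correct well. The bound follows from a Poisson-envelope estimate as in Lemma \ref{s20}: the total rate of rate-$e^{-2\beta}$ or slower transitions from a valley configuration is $O(n^2 e^{-2\beta})$ (at most $O(n^2)$ bonds available to break), while the expected valley sojourn is $O(e^{\beta})$, giving a bad-event probability $O(n^2 e^{-\beta})$ per excursion; summing over the up to $O(n^2)$ intermediate configurations visited en route to the trace target produces the $O(n^4 e^{-\beta})$ error. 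The same bound simultaneously controls $R_\zeta(\eta,\mc V(\eta)^c)$.

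The second step applies the argument of Lemma \ref{s20} directly: for each $\zeta \in \mc N(\mc E^{0,0}_{\mb 0})$, the newly detached particle (or the newly created hole, in the corner-filling case) performs a rate-$1$ walk on $\bb T_L$ with hitting time $O(L^2)$ to the quasi-square, and the Cauchy--Schwarz-type Poisson envelope bound $\sqrt{\alpha\,\mb E[H_{\rm hit}]}$ with $\alpha = O(e^{-\beta})$ yields $|\mb P_\zeta[H_{\bb H_{01}} = H_\xi] - \bb M(\zeta,\xi)| \le C_0 L e^{-\beta/2}$. Summing over the $|\mc N(\mc E^{0,0}_{\mb 0})| = O(n)$ possible exit configurations produces the $O(nL e^{-\beta/2})$ contribution. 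Combining the two steps with an equilibration argument on the valley — the rate-$1$ walk on $Q^{0,0}_{\mb 0}$ mixes in time $O(n^2) \ll e^\beta$, so by reversibility the exit distribution over $\mc N(\mc E^{0,0}_{\mb 0})$ is essentially equilibrated — yields $R_\zeta(\eta,\xi) = e^{-\beta}\mb R(\eta,\xi) + O(e^{-\beta}\kappa_2)$, which is the claim. The main obstacle is making this last equilibration step quantitative enough to fit inside the $n^4 e^{-\beta}$ budget: one must control a reversible walk on a valley of size $n-1$ with an $O(e^{-\beta})$ killing perturbation, most naturally via a spectral-gap or coupling argument on the induced exit chain.
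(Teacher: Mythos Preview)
Your overall strategy is right and in the same spirit as the paper, but the paper organizes the argument through the decomposition of Lemma~\ref{s02},
\[
R_\zeta(\eta,\xi)\;=\;\frac{\Cap_K(\eta,\Delta_1)}{\mu_K(\eta)}\sum_{\zeta\in\Delta_1}\mb P_\eta[H_{\Delta_1}=H_\zeta]\,\mb P_\zeta[H_\Xi=H_\xi]\;,
\]
and then proves three separate estimates: (i) $\mu_K(\eta)^{-1}\Cap_K(\eta,\Delta_1)=|\mc N(\mc E^{i,j}_{\mb x})|\,e^{-\beta}+O(n^3e^{-2\beta})$; (ii) $\mb P_\eta[H_{\Delta_1}=H_\zeta]=|\mc N(\mc E^{i,j}_{\mb x})|^{-1}\mb 1\{\zeta\in\mc N(\mc E^{i,j}_{\mb x})\}+O(n^2e^{-\beta})$; (iii) $|\mb P_\zeta[H_\Xi=H_\xi]-\bb M(\zeta,\xi)|\le C_0Le^{-\beta/2}$. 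Your ``second step'' is exactly (iii). The factorization makes the bookkeeping transparent and, crucially, reduces what you call the ``main obstacle'' to the single estimate (ii).

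That equilibration step does not require a spectral-gap or coupling argument. The paper handles it by writing $V(\zeta_k)=\mb P_{\zeta_k}[H_{\Delta_1}=H_\zeta]$ for the $n-1$ configurations $\zeta_k$ of the valley, obtaining from one Markov step the recursion $(1+O(e^{-\beta}))V(\zeta_k)=V(\zeta_{k-1})+V(\zeta_{k+1})-V(\zeta_k)+e^{-\beta}\mb 1(\zeta,\zeta_k)$ (with appropriate boundary terms), summing over $k$ to get $\sum_k V(\zeta_k)$ up to $O(ne^{-\beta})$, and using $|V(\zeta_{k+1})-V(\zeta_k)|\le C_0ne^{-\beta}$ to conclude $|V(\eta)-|\mc N|^{-1}|\le C_0n^2e^{-\beta}$. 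This is elementary and quantitative enough.

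Your error accounting in the first step is off: a configuration in $\mc E^{i,j}_{\mb x}$ has $O(1)$ rate-$e^{-2\beta}$ jumps and $O(n)$ rate-$e^{-3\beta}$ jumps, not $O(n^2)$, and the valley has $n-1$ configurations, not $O(n^2)$. The $n^4e^{-\beta}$ in $\kappa_2$ arises in the paper not from your mechanism but from multiplying the $O(n^2e^{-\beta})$ error in (ii) by $|\mc N|\cdot e^{-\beta}\cdot|\mc N|=O(n^2e^{-\beta})$ when summing over $\zeta\in\mc N$.
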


\begin{pf}
Let $\eta= \eta^{i,j}_{\mb x}$ and fix a configuration $\xi\in
\Xi$, $\xi\neq\eta$. Recall the formula for the jump rate
$R_\zeta(\eta,\xi)$ provided by Lemma \ref{s02}. We first claim that
%
\begin{equation}
\label{08}\bigl | \mu_K(\eta)^{-1} \Cap_K(
\eta,\Delta_1) - \bigl|\mc N\bigl(\mc E^{i,j}_{\mb x}
\bigr)\bigr| e^{-\beta} \bigr| \le C_0 n^3 e^{-2\beta}
\end{equation}
for some finite constant $C_0$. Indeed, let $W$ be the equilibrium
potential: $W(\zeta) = \mb P_\zeta[H_\eta<H_{\Delta_1}]$, $\zeta\in
\mc E^{i,j}_{\mb x}$. Note that $\zeta$ in the previous formula is a
subscript and not a superscript. The probability refers to the process
$\eta(t)$ and not to the trace process $\zeta(t)$. By the Dirichlet
principle \cite{ds1}, $\Cap_K(\eta,\Delta_1) = D_K(W)$. To fix
ideas, assume that $i=j=2$, $\mb x=\mb w$. Denote by $\zeta_j$, $1\le
j\le n-1$, the configuration of $\mc E^{2,2}_{\mb w}$ which has a
particle at site $\mb u_j=(j-1, n)$. By the strong Markov property,
\[
\bigl\{1+ 4 e^{-\beta} + \delta_1\bigr\} W(
\zeta_1) = W(\zeta_2),
\]
where $\delta_1 \in(0,C_0 e^{-2\beta})$ represents the rate at which
the process jumps from $\zeta_1$ to configurations in $\Delta_2$. We
may write similar identities for $W(\zeta_j)$, $1\le j\le n-1$. Since
$0\le W\le1$ and since $W(\eta)=1$, we deduce from these identities
that $|W(\zeta_{j+1}) - W(\zeta_j)|\le5ne^{-\beta}$ and that
$|W(\zeta_j) - 1|\le5n^2e^{-\beta}$. To conclude the proof of
\eqref{08}, it remains to recall the explicit expression for the
Dirichlet form $D_K (W)$.

We turn to the second term in \eqref{07}. We claim that
%
\begin{equation}
\label{09} \max_{\zeta\in\Delta_1} \biggl| \mb P_{\eta} [
H_\zeta= H_{\Delta_1} ] - \frac{1}{|\mc N(\mc E^{i,j}_{\mb x})|} \mb1\bigl\{\zeta\in
\mc N\bigl(\mc E^{i,j}_{\mb x}\bigr) \bigr\} \biggr| \le
C_0 n^2 e^{-\beta}
\end{equation}
for some finite constant $C_0$. To fix ideas, assume again that
$i=j=2$, $\mb x=\mb w$, and observe that $|\mc N(\mc E^{2,2}_{\mb
w})|=3n$. It is clear that $\mb P_{\eta} [H_\zeta= H_{\Delta_1}]$ is
bounded by $C_0 e^{-\beta}$ if $\zeta$ does not belong to $\mc N(\mc
E^{2,2}_{\mb w})$. Fix $\zeta\in\mc N(\mc E^{2,2}_{\mb w})$ and let
$V(\zeta_j) = \mb P_{\zeta_j} [ H_\zeta= H_{\Delta_1}] $, $1\le j\le
n-1$. By the strong Markov property,
%
\begin{equation}
\label{10} \bigl\{1+ 4 e^{-\beta} + \delta_1\bigr\} V(
\zeta_1) = V(\zeta_2) + e^{-\beta} \mb1(\zeta,
\zeta_1),
\end{equation}
where $0<\delta_1<C_0 e^{-2\beta}$ represents the rate at which the
process jumps from $\zeta_1$ to configurations in $\Delta_2$, and $\mb
1(\zeta,\zeta_1)$ is equal to $1$ if $\zeta$ can be obtained from
$\zeta_1$ by moving one particle, and is equal to $0$
otherwise. Similar identities can be obtained for $\zeta_j$, $2\le
j\le n-1$. For some configurations, the factor $4$ is replaced by
$3$. Summing all these identities and dividing by $e^{-\beta}$, we
obtain that
\[
3 \sum_{j=1}^{n-1} V(\zeta_j) +
V(\zeta_1) + V(\zeta_2) + V(\zeta_{n-2}) +
e^{\beta} \sum_{j=1}^{n-1}
\delta_j V(\zeta_j) = 1.
\]
It also follows from the identities \eqref{10} and from the bound $0\le
V\le1$ that $|V(\zeta_{2}) - V(\zeta_{1})| \le6 e^{-\beta}$,
$|V(\zeta_{j+2}) - V(\zeta_{j+1})| \le|V(\zeta_{j+1}) - V(\zeta_j)| +
6 e^{-\beta}$ for $1\le j\le n-3$ and for $\beta$ large. Iterating
these inequalities, we obtain that $|V(\zeta_{k}) - V(\zeta_j)| \le6
n^2 e^{-\beta}$, $1\le j\le k\le n-1$. Hence, summing and subtracting
$V(\eta)$ in the last displayed formula we get that
\[
\biggl| V(\eta) - \frac{1}{3n}\biggr | \le C_0n^2e^{-\beta},
\]
which proves \eqref{09} in the case $i=j=2$.

It remains to evaluate the third term in the sum \eqref{07}.
We claim that
%
\begin{equation}
\label{11} \max_{\zeta\in\mc N(\mc E^{i,j}_{\mb x})} \max_{\xi\in\Xi} \bigl| \mb
P_{\zeta} [ H_{\Xi} = H_{\xi} ] - \bb M (\zeta, \xi)
\bigr| \le C_0 L e^{-\beta/2}.
\end{equation}
We prove \eqref{11} in the case $i=j=2$, $\mb x=\mb w$,
$\zeta=\sigma^{\mb w_2,\mb z}\eta^{\mb w}$ for some site $\mb z$ at
distance $1$ from $Q^{2,2}_{\mb w}$. The other cases, which are
simpler, are left to the reader. For $\zeta=\sigma^{\mb w_2,\mb
z}\eta^{\mb w}$, the probability measure $\bb M (\zeta,
\cdot)$ gives positive weight only to the configurations $\eta^{\mb
0}$, $\eta^{2,j}_{\mb w}$, $0\le j\le3$, $\eta^{1,2}_{\mb w}$,
$\eta^{3,2}_{\mb w}$, $\eta^{1,1}_{\mb w}$ and $\eta^{3,1}_{\mb w}$.
We examine the case $\xi= \eta^{2,2}_{\mb w}$ and leave the others
to the reader.

Until the particle at site $\mb z$ hits the outer boundary of the
square $Q$, it evolves as a rate $1$, symmetric, nearest-neighbor
random walk, while all the other particles move at rate at most
$e^{-\beta}$. Denote by $H_{\mathrm{hit}}$ the time the particle initially
at site $\mb z$ hits $\partial_+ Q$ and by $H_{\mathrm{mv}}$ the time of
the first jump of a particle sitting in the quasi-square~$Q^{2,2}_{\mb
0}$.

The random time $H_{\mathrm{mv}}$ is bounded below by a mean $C_0 e^\beta$
random time $\mf e_\beta$ independent of the motion of the detached
particle. Hence, as in the proof of Lemma~\ref{s13},
%
\begin{equation}
\label{05} \mb P_{\zeta} [ H_{\mathrm{mv}} < H_{\mathrm{hit}} ]
\le \mb P_{\zeta} [ H_{\mathrm{hit}} > \mf e_\beta ] = \int
_0^\infty\mb P_{\zeta} [ H_{\mathrm{hit}}
> t ] \alpha e^{- \alpha t} \,dt,
\end{equation}
where $\alpha^{-1} = C_0 e^\beta$. Since, by \cite{lpw1}, Proposition
10.13, $\mb E_{\zeta} [ H_{\mathrm{hit}} ] \le C_0 L^2$, this last
integral is bounded by $C_0 L e^{-\beta/2}$. Therefore,
%
\begin{equation}
\label{06} \mb P_{\zeta} [ H_{\Xi} = H_{\xi} ] =
\mb P_{\zeta} [ H_{\Xi} = H_{\xi}, H_{\mathrm{hit}}
< H_{\mathrm{mv}} ] + L \varepsilon(\beta/2).
\end{equation}

On the set $H_{\mathrm{hit}} < H_{\mathrm{mv}}$, $\eta(H_{\mathrm{hit}})$ belongs to
the sets $\mc E^{2,j}_{\mb0}$, $0\le j\le3$, or is equal to the
configurations $\eta^\star_1 = \sigma^{\mb w_2,\mb w_2+e_2}\eta^{\mb w}$,
$\eta^\star_2 = \sigma^{\mb w_2,\mb w_2+e_1}\eta^{\mb w}$. Hence, by the
strong Markov property, the previous expression is equal to
\begin{eqnarray*}
&& \sum_{j=0}^3 \mb E_{\zeta}
\bigl[ \mb1 \{ H_{\mathrm{hit}} < H_{\mathrm{mv}} \} \mb1 \bigl\{
\eta(H_{\mathrm{hit}}) \in\mc E^{2,j}_{\mb0} \bigr\} \mb
P_{ \eta(H_{\mathrm{hit}})} [ H_{\Xi} = H_{\xi} ] \bigr]
\\
&&\qquad{} + \sum_{i=1}^2 \mb P_{\zeta}
\bigl[ H_{\mathrm{hit}} < H_{\mathrm{mv}}, \eta(H_{\mathrm{hit}}) =
\eta^\star_i \bigr] \mb P_{\eta^\star_i} [
H_{\Xi} = H_{\xi} ] + L \varepsilon(\beta/2).
\end{eqnarray*}
Proceeding as in the proof of Lemma \ref{s13}, we may replace the sets
$\mc E^{2,j}_{\mb0}$ by the configurations $\eta^{2,j}_{\mb0}$ with
an error bounded by $C_0 n e^{-\beta/2} \le C_0 L e^{-\beta/2}$. Since
$\xi=\eta^{2,2}_{\mb0}$, only the term $j=2$ gives a positive
contribution. By symmetry, $\mb P_{\eta^\star_i}  [ H_{\Xi} =
H_{\xi}  ] = P_i(\xi^{0,0}_{\mb w})$, $i=1$, $2$, where $P_i$ has
been introduced in Lemma \ref{s20}. Hence, by the definition
\eqref{26} of $\mf p$, the previous sum is equal to
\[
\mf p\bigl(\mb z, Q^{2,2}_{\mb0}\bigr) + \mf p(\mb z, \mb
w_2 + e_2) P_1 \bigl(\xi^{0,0}_{\mb w}
\bigr) + \mf p(\mb z, \mb w_2 + e_1) P_2
\bigl(\xi^{0,0}_{\mb w}\bigr) + L \varepsilon(\beta/2).
\]
At this point, \eqref{11} follows from Lemma \ref{s20}.

The assertion of the lemma is a consequence of Lemma \ref{s02} and of
estimates \eqref{08}, \eqref{09} and \eqref{11}. We first use estimate
\eqref{08} and the fact that $\mb P_\eta[H_{\Delta_1} = H_\zeta]$ is
a probability measure in $\zeta$ to replace $\mu_K(\eta)^{-1}
\Cap_K(\eta,\Delta_1)$ by $|\mc N(\mc E^{i,j}_{\mb x})| e^{-\beta}$
with an error bounded by $C_0 n^3 e^{-2\beta}$. Then we consider
separately the sum over $\zeta\in\mc N(\mc E^{i,j}_{\mb x})$ and the
sum over $\zeta\in\Delta_1\setminus\mc N(\mc E^{i,j}_{\mb x})$. For
the first one, we use \eqref{09} to replace $\mb P_\eta[H_{\Delta_1}
= H_\zeta]$ by $|\mc N(\mc E^{i,j}_{\mb x})|^{-1} \mb1\{\zeta\in\mc
N(\mc E^{i,j}_{\mb x})\}$ with an error bounded by $C_0 n^4
e^{-2\beta}$ because $|\mc N(\mc E^{i,j}_{\mb x})|$ is less than or
equal to $C_0n$. To estimate the sum over $\zeta\in\Delta_1\setminus
\mc N(\mc E^{i,j}_{\mb x})$, we proceed as follows. Since $|\mc N(\mc
E^{i,j}_{\mb x})| \le C_0 n$, we have that
\begin{eqnarray*}
&& \bigl|\mc N\bigl(\mc E^{i,j}_{\mb x}\bigr)\bigr| e^{-\beta} \sum
_{\zeta\in\Delta_1\setminus\mc N(\mc E^{i,j}_{\mb x})} \mb P_\eta[H_{\Delta_1}
=H_\zeta] \mb P_{\zeta} [ H_{\Xi} = H_{\xi}
]
\\
&&\qquad \le C_0 n e^{-\beta} \sum_{\zeta\in\Delta_1\setminus\mc N(\mc E^{i,j}_{\mb x})}
\mb P_\eta[H_{\Delta_1} =H_\zeta].
\end{eqnarray*}
This last sum can be written as
\[
1 - \sum_{\zeta\in\mc N(\mc E^{i,j}_{\mb x})} \mb P_\eta[H_{\Delta_1}
= H_\zeta] = \sum_{\zeta\in\mc N(\mc E^{i,j}_{\mb x})} \biggl\{
\frac{1}{|\mc N(\mc E^{i,j}_{\mb x})|} - \mb P_\eta[H_{\Delta_1} = H_\zeta]
\biggr\}.
\]
By \eqref{09}, this expression is bounded by $C_0 n^3 e^{-\beta}$.
Finally, we use \eqref{11} to replace $\mb P_\zeta[H_{\Xi} = H_\xi]$
by $\bb M (\zeta,\xi)$ with an error bounded by $C_0 n L
e^{-(3/2)\beta}$ because $|\mc N(\mc E^{i,j}_{\mb x})|$ is less than
or equal to $C_0n$. This concludes the proof of the first assertion of
the lemma. The proof of the second assertion is analogous.
\end{pf}

\begin{lemma}
\label{s05}
There exists a finite constant $C_0$ such that for all $\mb
x\in\bb T_L$, $\mf a\in\{\mf s, \mf l\}$, $(\mb k,\bs
\ell) \in I^*_{\mf a}$
\[
\max_{\xi\in\Xi}\biggl | R_\zeta\bigl(\eta_{\mb x}^{\mf a, (\mb k,\bs\ell)},
\xi\bigr) - e^{-\beta} \sum_{\zeta\in\mc N(\eta_{\mb x}^{\mf a,
(\mb k,\bs\ell)})} \bb M (
\zeta, \xi) \biggr| \le C_0 n^{1/2} e^{-(3/2)\beta}.
\]
\end{lemma}

\begin{pf}
Fix $\mb x\in\bb T_L$, $\mf a\in\{\mf s, \mf l\}$, $(\mb k,\bs
\ell) \in I^*_{\mf a}$ and let $\eta= \eta_{\mb x}^{\mf a, (\mb
k,\bs\ell)}$. In view of \eqref{07}, we need to compute three
expressions.

We first claim that
\[
\bigl| \mu_K(\eta)^{-1} \Cap_K(\eta,
\Delta_1) -\bigl |\mc N(\eta)\bigr| e^{-\beta} \bigr| \le C_0
e^{-2\beta}
\]
for some finite constant $C_0$. Indeed, let $W$ be the equilibrium
potential: $W(\xi) = \mb P_\xi[H_\eta<H_{\Delta_1}]$. By the Dirichlet
principle, $\Cap_K(\eta,\Delta_1) = D_K(W)$. From $\eta$, the
process jumps at rate $e^{-\beta}$ (resp., $e^{-2\beta}$,
$e^{-3\beta}$) to $|\mc N(\eta)|$ (resp., at most $C_0$, $4n$)
configurations in $\Delta_1$. Hence, since $ne^{-\beta}\ll1$,
$\mu_K(\eta)^{-1} D_K(W) = |\mc N(\eta)| e^{-\beta} \pm C_0
e^{-2\beta}$, proving the claim.

By similar reasons,
\[
\max_{\zeta\in\Delta_1}\biggl | \mb P_{\eta} [ H_\zeta=
H_{\Delta_1} ] - \frac{1}{|\mc N(\eta)|} \mb1\bigl\{\zeta\in\mc N(\eta)
\bigr\}\biggr |
\le C_0 e^{-\beta}
\]
for some finite constant $C_0$. Finally, we claim that
\[
\max_{\zeta\in\mc N(\eta)} \max_{\xi\in\Xi} \bigl| \mb
P_{\zeta} [ H_{\Xi} = H_{\xi} ] - \bb M (\zeta, \xi)
\bigr| \le C_0 \sqrt{ n e^{-\beta} }.
\]
There are two different cases. Assume that $\zeta$ is obtained from
$\eta$ by moving a particle on a side which has $3\le m\le n$
particles. In this case, the configuration $\zeta$ has a hole in one of
the sides of the square which moves according to a rate~$1$
nearest-neighbor, symmetric random walk until it reaches the boundary
of the set of $m$ particles. Let $H_{\mathrm{hit}}$ be the time the hole
attains the boundary and let $H_{\mathrm{mv}}$ be the first time a jump of
rate $e^{-\beta}$ or less occurs. $H_{\mathrm{mv}}$ is bounded below by a
mean $C_0 e^{\beta}$ exponential random variable $\mf e_\beta$,
independent of the displacement of the hole. Since $\mb E_\zeta
[H_{\mathrm{hit}}] = (m-1)/2$, by an argument repeatedly used in the
previous proofs, $\mb P_{\zeta} [ H_{\mathrm{mv}} < H_{\mathrm{hit}}] \le C_0
(ne^{-\beta})^{1/2}$. Hence,
\[
\mb P_{\zeta} [ H_{\Xi} = H_{\xi} ] = \mb
P_{\zeta} [ H_{\Xi} = H_{\xi}, H_{\mathrm{hit}} <
H_{\mathrm{mv}} ] + \sqrt{n} \varepsilon(\beta/2).
\]

On the set $\{H_{\mathrm{hit}} < H_{\mathrm{mv}}\}$, $H_{\mathrm{hit}} = H_{\Xi}$
and either $\eta(H_{\mathrm{hit}})$ is the original configuration or it is
the one in which a row or a column of particles in a side of the
rectangle has been translated by one unit. The first case has
probability $(m-1)/m$ and the second one $1/m$.

This argument can be extended to the case where $\zeta$ is obtained
from $\eta$ by moving a particle on a side which has $2$
particles.

To conclude the proof, it remains to put together the previous three
estimates, as in the previous lemma.
\end{pf}

The proof of the next lemma is similar to that of Lemma \ref{s03}
and the proof of the following one is identical to that of Lemma
\ref{s05}.

\begin{lemma}
\label{s07}
There exists a finite constant $C_0$ such that for all $\mb
x\in\bb T_L$, $\mf a\in\{\mf s, \mf l\}$, $0\le j\le3$,
\[
\max_{\xi\in\Xi} \biggl| R_\zeta\bigl(\zeta^{\mf a,j}_{\mb x},
\xi\bigr) - e^{-\beta} \sum_{\zeta\in\mc N(\mc E^{\mf a,j}_{\mb x})} \bb M (
\zeta, \xi) \biggr| \le C_0 e^{-2\beta}.
\]
\end{lemma}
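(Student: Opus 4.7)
The plan is to follow the three-step decomposition used in the proof of Lemma \ref{s03}, applied now to $\eta := \zeta^{\mf a, j}_{\mb x}$ and the well $\mc E := \mc E^{\mf a, j}_{\mb x}$. Starting from the exact formula of Lemma \ref{s02},
\begin{equation*}
R_\zeta(\eta,\xi) \;=\; \frac{\Cap_K(\eta,\Delta_1)}{\mu_K(\eta)} \sum_{\zeta' \in \Delta_1} \mb P_\eta\big[H_{\Delta_1} = H_{\zeta'}\big]\, \mb P_{\zeta'}\big[H_\Xi = H_\xi\big],
\end{equation*}
I would estimate separately the capacity prefactor, the hitting distribution over $\Delta_1$, and the absorption probabilities into $\Xi$, and then splice them together.

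For the capacity, I would invoke the Dirichlet principle on the equilibrium potential $W(\zeta') = \mb P_{\zeta'}[H_\eta < H_{\Delta_1}]$ on $\mc E$, combined with the chain of strong-Markov identities obtained by sliding the attached particle along $\partial_j T^{\mf a}_{\mb x}$. Bounding $0 \le W \le 1$ and iterating would yield $|W(\zeta') - 1| \le C_0 n^2 e^{-\beta}$ on $\mc E$ and hence, as in \eqref{08},
\begin{equation*}
\Big|\, \mu_K(\eta)^{-1}\Cap_K(\eta,\Delta_1) \;-\; |\mc N(\mc E)|\, e^{-\beta} \,\Big| \;\le\; C_0\, e^{-2\beta}.
\end{equation*}
A parallel analysis of the harmonic function $\zeta'' \mapsto \mb P_{\zeta''}[H_{\zeta_0} = H_{\Delta_1}]$ along $\mc E$ for a fixed $\zeta_0 \in \Delta_1$ would then give, as in \eqref{09},
\begin{equation*}
\max_{\zeta_0 \in \Delta_1}\Big| \mb P_\eta[H_{\zeta_0}=H_{\Delta_1}] \;-\; \frac{\mb 1\{\zeta_0 \in \mc N(\mc E)\}}{|\mc N(\mc E)|}\Big| \;\le\; C_0\, e^{-\beta}.
\end{equation*}

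The third and principal step is the approximation $\mb P_{\zeta_0}[H_\Xi = H_\xi] \approx \bb M(\zeta_0,\xi)$ for $\zeta_0 \in \mc N(\mc E)$. Here I would mimic the argument leading to \eqref{11}: letting $H_{\rm hit}$ be the hitting time of the relevant local configuration in $\Xi$ under the fast (rate-one) random walk of the displaced particle or hole, and $H_{\rm mv}$ the first competing rate-$e^{-\beta}$ move by another particle, I would dominate $H_{\rm mv}$ from below by an independent exponential random variable of mean $C_0 e^\beta$ and use Chebyshev/Markov as in \eqref{05}. The crucial geometric point, distinguishing this case from Lemma \ref{s03}, is that for every $\zeta_0 \in \mc N(\mc E)$ the fast walk is confined to a neighborhood of the rectangle $T^{\mf a}_{\mb x}$ of diameter $O(n)$ (a displaced particle is either a detached corner of the rectangle, which hits a side after $O(n^2)$ steps, or a displaced edge particle with similar confinement) and therefore $\mb E_{\zeta_0}[H_{\rm hit}] = O(n^2)$, rather than $O(L^2)$ as in Lemma \ref{s03}. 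After conditioning on $\{H_{\rm hit} \le H_{\rm mv}\}$, applying the strong Markov property at $H_{\rm hit}$ and invoking the definition of $\bb M(\zeta_0,\cdot)$, this yields
\begin{equation*}
\max_{\zeta_0 \in \mc N(\mc E)}\max_{\xi\in\Xi}\big|\,\mb P_{\zeta_0}[H_\Xi = H_\xi]-\bb M(\zeta_0,\xi)\,\big| \;\le\; C_0\, e^{-\beta}.
\end{equation*}

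Splicing the three estimates exactly as in the concluding paragraph of the proof of Lemma \ref{s03} --- separating the contributions from $\zeta_0 \in \mc N(\mc E)$ and $\zeta_0 \in \Delta_1 \setminus \mc N(\mc E)$, and using $|\mc N(\mc E)| = O(1)$ to bound the cross terms --- collapses the cumulative error to the claimed $C_0\, e^{-2\beta}$. The main obstacle is the third step: one must enumerate the (few) combinatorial types of configurations in $\mc N(\mc E)$ and verify in each case the local-confinement claim for the fast random walk, which is precisely what replaces the $L e^{-\beta/2}$ error present in Lemma \ref{s03} by a $\beta$-independent geometric factor and yields the clean $e^{-2\beta}$ bound.
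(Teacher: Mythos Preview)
Your three-step decomposition via Lemma~\ref{s02} is exactly the paper's route (the paper simply says the proof is ``similar to the one of Lemma~\ref{s03}''), and Steps~1--2 go through as you indicate. The gap is in Step~3: you have misidentified both the configurations in $\mc N(\mc E^{\mf a,j}_{\mb x})$ and the reason the bound is sharper than in Lemma~\ref{s03}.

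The rate-$e^{-\beta}$ exits from the well $\mc E^{\mf a,j}_{\mb x}$ are \emph{not} obtained by displacing a corner or an edge particle of the rectangle $T^{\mf a}_{\mb x}$: each such particle has at least two occupied neighbours, so those moves cost at least two bonds and have rate $\le e^{-2\beta}$. The rate-$e^{-\beta}$ exits come from the \emph{attached} extra particle on $\partial_j T^{\mf a}_{\mb x}$ stepping away from the side. For each of its $O(n)$ positions along side~$j$ this produces a configuration of the form ``complete rectangle $T^{\mf a}_{\mb x}$ plus a free particle just off side~$j$''; in particular $|\mc N(\mc E^{\mf a,j}_{\mb x})|$ is of order $n$, not $O(1)$. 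From such a $\zeta_0$ the free particle performs a rate-$1$ random walk on the full torus until it hits $\partial_+ T^{\mf a}_{\mb x}$, so $\mb E_{\zeta_0}[H_{\rm hit}]$ is of order $L^2$ (by \cite[Proposition~10.13]{lpw1}), exactly as in \eqref{05}--\eqref{06}; there is no confinement to an $O(n)$-neighbourhood.

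What actually distinguishes this case from Lemma~\ref{s03} is the rate of the competing move $H_{\rm mv}$. Once the attached particle has detached, the remaining $n^2-1$ particles form the \emph{complete} rectangle $T^{\mf a}_{\mb x}$, in which every particle has at least two occupied neighbours and therefore moves only at rate $\le e^{-2\beta}$. By contrast, in Lemma~\ref{s03} the remaining configuration is the quasi-square $Q^i_{\mb x}$, whose two particles adjacent to the missing corner can slide into the hole at rate $e^{-\beta}$; this is why \eqref{05} there yields only $L\,\epsilon(\beta/2)$. Here you must dominate $H_{\rm mv}$ by an exponential of mean $c_0 e^{2\beta}$ rather than $c_0 e^{\beta}$, and it is this extra factor of $e^{-\beta}$ in the Step-3 estimate---not any geometric confinement of the walk---that, after splicing with Steps~1--2 as in the last paragraph of the proof of Lemma~\ref{s03}, gives the claimed bound.
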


\begin{lemma}
\label{s06}
There exists a finite constant $C_0$ such that for all $\mb
x\in\bb T_L$, $\mf a\in\{\mf s, \mf l\}$, $(\mb k,\bs
\ell) \in I_{2, \mf a}^*$,
\[
\max_{\xi\in\Xi} \biggl| R_\zeta\bigl(\zeta_{\mb x}^{\mf a, (\mb k,\bs\ell)},
\xi\bigr) - e^{-\beta} \sum_{\zeta\in\mc N(\zeta_{\mb x}^{\mf a,
(\mb k,\bs\ell)})} \bb M (
\zeta, \xi) \biggr| \le C_0 n e^{-(3/2)\beta}.
\]
\end{lemma}

\section{Coupling \texorpdfstring{$\zeta(t)$}{$zeta(t)$} and \texorpdfstring{$\widehat{\zeta}(t)$}{$widehat{zeta}(t)$}}
\label{sec9}

In the previous section, we estimate the difference between the jump
rates of the trace process $\zeta(t)$ and of the asymptotic process~$\widehat{\zeta}(t)$.
We use these estimates here to compare the hitting
times of these processes. The first result of this section shows that
we may couple the processes $\zeta(t)$ and $\widehat{\zeta}(t)$ in such
a way that they stay together for a long amount of time. Consider two
continuous-time Markov processes $X^1$, $X^2$ taking values on the
same countable state space $E$. Denote by $R_1$, $R_2$ the respective
jump rates.

\begin{lemma}
\label{bs01}
Assume that
\[
\sup_{\eta\in E} \sum_{\xi\in E} \bigr|
R_1(\eta,\xi) - R_2(\eta,\xi) \bigr| \le \alpha
\]
for some $\alpha<\infty$. Then, for every $\eta\in E$, there exists a
coupling $\overline{\mb P}$ of the two processes such that $X^1(0) =
X^2(0)=\eta$ $\overline{\mb P}$-a.s., and such that for all $t>0$
\[
\overline{\mb P} [ T_{\mathrm{cp}} \le t ] \le P[\mf e_\alpha\le t ],
\]
where $T_{\mathrm{cp}}$ is the first time the processes separate, $T_{\mathrm{cp}}
= \inf\{ t>0 \dvtx X^1_t \neq X^2_t\}$, and where $\mf e_\alpha$ is a mean
$\alpha^{-1}$ exponential random variable.
\end{lemma}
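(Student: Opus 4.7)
The plan is to build an explicit coupling by splitting each pair of transition rates into their common and private parts. Set $R(\eta,\xi) = \min\{R_1(\eta,\xi), R_2(\eta,\xi)\}$ and $D_i(\eta,\xi) = R_i(\eta,\xi) - R(\eta,\xi) \ge 0$ for $i=1,2$. The pointwise identity $D_1(\eta,\xi) + D_2(\eta,\xi) = |R_1(\eta,\xi) - R_2(\eta,\xi)|$, combined with the hypothesis of the lemma, yields
\begin{equation*}
\sum_{\xi \in E} \bigl[ D_1(\eta,\xi) + D_2(\eta,\xi) \bigr] \;\le\; \alpha
\qquad \text{for every } \eta \in E.
\end{equation*}

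Next I would construct the coupling as follows. Both chains start at $\eta$. While they sit at a common state $\zeta$, run three independent exponential clocks attached to $\zeta$: a \emph{common} clock of rate $\sum_\xi R(\zeta,\xi)$ whose firing makes both chains jump together to a target drawn from $R(\zeta,\cdot)/\sum_\xi R(\zeta,\xi)$, and two \emph{private} clocks of rates $\sum_\xi D_i(\zeta,\xi)$, $i=1,2$, whose firings correspond to a jump of chain $i$ alone to a target drawn from $D_i(\zeta,\cdot)/\sum_\xi D_i(\zeta,\xi)$. From the first firing of a private clock onward, let the two chains evolve independently, each with its own generator. A direct inspection shows that the marginal jump rate from $\zeta$ of chain $i$ is $\sum_\xi R(\zeta,\xi) + \sum_\xi D_i(\zeta,\xi) = \lambda_i(\zeta)$ and that the target has conditional distribution $R_i(\zeta,\cdot)/\lambda_i(\zeta)$, so the marginals of the coupled process reproduce the laws of $X^1$ and $X^2$ started from $\eta$.

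By construction, $T_{\rm cp}$ equals the first firing of one of the two private clocks during the coupled phase. Because at any time $t < T_{\rm cp}$ the common state $\zeta_t$ satisfies $\sum_\xi [D_1(\zeta_t,\xi) + D_2(\zeta_t,\xi)] \le \alpha$, the integrated intensity of the point process of private firings up to time $t \wedge T_{\rm cp}$ is bounded above by $\alpha (t \wedge T_{\rm cp})$. Hence $T_{\rm cp}$ stochastically dominates a mean $\alpha^{-1}$ exponential random variable, which is precisely the claim $\overline{\mb P}[T_{\rm cp} \le t] \le P[\mf e_\alpha \le t]$. The only step requiring some care is the verification that the constructed coupled process has the correct marginal laws, which is a routine generator computation; beyond that there is no real obstacle.
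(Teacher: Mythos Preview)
Your construction is correct and is exactly the standard ``basic coupling'' one has in mind here; the paper itself declares the proof elementary and omits it, so there is nothing to compare against beyond noting that your write-up supplies what the authors left to the reader. The only cosmetic improvement would be to make the stochastic domination step fully explicit: realize the private clocks as a thinning of an independent rate-$\alpha$ Poisson process, so that the first private firing is almost surely no earlier than the first point of that Poisson process, which is the desired $\mf e_\alpha$.
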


The proof of this result is elementary and left to the reader. By this
lemma and by Propositions \ref{s08} and \ref{s16}, for each
configuration $\eta\in\Xi$, there exists a coupling of the
processes $\zeta(t)$ and $\widehat{\zeta}(t)$, denoted by $\mb
P^{\zeta, \widehat{\zeta}}_{\eta}$, such that $\mb P^{\zeta, \widehat
\zeta}_{\eta} [\zeta(0) = \widehat{\zeta}(0) =\eta]=1$ and such that
for all $t>0$
%
\begin{equation}
\label{b01} \mb P^{\zeta, \widehat{\zeta}}_{\eta} [ T_{\mathrm{cp}} \le t ]
\le P[\mf e_\beta\le t ],
\end{equation}
where $\mf e_\beta$ is a mean $e^{\beta} \kappa_2^{-1}$ exponential
random variable.

\begin{proposition}
\label{lc01}
Recall from \eqref{c13} the definition of the error $\mb e(\beta)$.
For a subset $A$ of $\Lambda_L$, let $\Gamma_A = \{\eta^{\mb x}\in
\Omega_{L,K} \dvtx \mb x\in A\}$. Assume that $\lim_\beta\kappa_1
=0$. Then
\[
\max_{A\subset\Lambda_L} \bigl| \mb P^{\zeta}_{\eta^{\mb0}} \bigl[
H^+_{\Gamma} = H^+_{\Gamma_A} \bigr] - \mb P^{\widehat{\zeta}}_{\eta^{\mb0}}
\bigl[ H^+_{\Gamma} = H^+_{\Gamma_A} \bigr] \bigr| \le \mb e(\beta).
\]
\end{proposition}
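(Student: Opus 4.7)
The strategy is to couple $\zeta(t)$ and $\widehat\zeta(t)$, both starting at $\eta^{\mb 0}$, and to estimate the probability that they separate before the first return to $\Gamma$. Using Lemma \ref{bs01} together with the rate differences from Propositions \ref{s08} and \ref{s16}, I construct a coupling $\mb P^{\star}_{\eta^{\mb 0}}$ under which both processes agree up to a separation time $T_{\rm cp}$ whose instantaneous firing rate at the common state $\eta$ is $\alpha(\eta) := \sum_\xi |R_\zeta(\eta,\xi) - R_{\widehat\zeta}(\eta,\xi)|$. Since the event $\{H^+_\Gamma = H^+_{\Gamma_A}\}$ is determined by the trajectory on $[0,H^+_\Gamma]$, on $\{T_{\rm cp} > H^+_\Gamma\}$ both processes have the same value of this indicator, giving uniformly in $A$,
\begin{equation*}
\Bigl|\mb P^{\zeta}_{\eta^{\mb 0}}\bigl[H^+_\Gamma = H^+_{\Gamma_A}\bigr] - \mb P^{\widehat\zeta}_{\eta^{\mb 0}}\bigl[H^+_\Gamma = H^+_{\Gamma_A}\bigr]\Bigr| \;\le\; \mb P^{\star}_{\eta^{\mb 0}}\bigl[T_{\rm cp} \le H^+_\Gamma\bigr].
\end{equation*}
It then remains to bound the right-hand side by $\bs e(\beta)$.

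Write $H^+_\Gamma = T_1 + T_\Gamma$, where $T_1$ is the holding time at $\eta^{\mb 0}$ and $T_\Gamma$ the subsequent time spent in $\Xi\setminus\Gamma$ before first hitting $\Gamma$, and split
\begin{equation*}
\mb P^{\star}\bigl[T_{\rm cp} \le H^+_\Gamma\bigr] \;\le\; \mb P^{\star}\bigl[T_{\rm cp} \le T_1\bigr] \;+\; \mb P^{\star}\bigl[T_1 < T_{\rm cp} \le H^+_\Gamma\bigr].
\end{equation*}
By Proposition \ref{s16}, summed over the $O(1)$-sized set $\mc G_{\mb 0}$ and combined with the bound on $R_\zeta(\eta^{\mb 0}, \Xi \setminus \mc G_{\mb 0})$, the rate at $\eta^{\mb 0}$ satisfies $\alpha(\eta^{\mb 0}) \le C_0 e^{-2\beta}\kappa_1 \le C_0 e^{-2\beta} L e^{-\beta/2}$. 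Since $\mb E[T_1] = 1/\lambda_\zeta(\eta^{\mb 0}) \sim e^{2\beta}/8$, the first summand is bounded by $\alpha(\eta^{\mb 0})\, \mb E[T_1] \le C_0 L e^{-\beta/2}$, reproducing the first contribution to $\bs e(\beta)$.

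For the second summand, I use the standard coupling-plus-Markov estimate: for any $t > 0$,
\begin{equation*}
\mb P^{\star}\bigl[T_1 < T_{\rm cp} \le H^+_\Gamma\bigr] \;\le\; \frac{\mb E^{\widehat\zeta}_{\eta^{\mb 0}}[T_\Gamma]}{t} \;+\; \alpha_\star\, t,
\end{equation*}
where $\alpha_\star := \sup_{\eta \in \Xi \setminus \Gamma}\alpha(\eta)$. By Proposition \ref{s08}, combined with the observation that each $\mb R(\eta, \cdot)$ is a finite sum of probability measures $\bb M(\zeta, \cdot)$ with $O(1)$-sized supports so that $|\mc V(\eta)|$ is uniformly bounded in $\beta$, one obtains $\alpha_\star \le C_0 e^{-\beta}\kappa_2$. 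Optimizing in $t$ gives the bound $2\sqrt{\alpha_\star\, \mb E^{\widehat\zeta}_{\eta^{\mb 0}}[T_\Gamma]}$, which together with the excursion estimate $\mb E^{\widehat\zeta}_{\eta^{\mb 0}}[T_\Gamma] \le C_0 n^7 e^\beta$ yields $\sqrt{n^7 \kappa_2}$, the second contribution to $\bs e(\beta)$.

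The principal technical obstacle is the excursion-time bound $\mb E^{\widehat\zeta}_{\eta^{\mb 0}}[T_\Gamma] \le C_0 n^7 e^\beta$, which relies on the energy-landscape structure recalled in Section \ref{sec3}. By Lemma \ref{s17}, with probability at least $1 - 23/n$ the excursion stays in $\mc G_{\mb 0}$ and returns to $\eta^{\mb 0}$ after $O(1)$ jumps, each of holding time $O(e^\beta)$, contributing $O(e^\beta)$ to $T_\Gamma$. With complementary probability $O(1/n)$ the process escapes into $\Omega^2 \cup \Omega^3 \cup \Omega^4$; on this event, a Green's-function estimate for the reversible random walk on $\Xi\setminus\Gamma$ (using the reversibility identity \eqref{eq:2} and the lower bounds \eqref{42}, \eqref{51}--\eqref{52}), combined with the cardinality bound \eqref{q1}, yields a conditional expected return time $O(n^8 e^\beta)$, which after averaging over the escape probability gives the claimed $O(n^7 e^\beta)$. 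Carrying out this Green's-function estimate sharply, in particular controlling the wandering of the process through the seven-dimensional family of configurations of $\Omega^2$, is the most delicate step.
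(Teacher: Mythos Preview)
Your overall strategy---couple $\zeta$ and $\widehat\zeta$, split according to whether separation occurs before or after the first jump, and optimize a time parameter in the excursion piece---is essentially the paper's argument, just organized slightly differently: the paper first compares the distributions of the first jump via Proposition~\ref{s16} (yielding the $\kappa_1$ term) and only then couples the two chains starting from $\eta\in\mc V(\eta^{\mb 0})$, whereas you couple from $\eta^{\mb 0}$ with a state-dependent separation rate. Your treatment of the first term and your bound $\alpha_\star\le C_0e^{-\beta}\kappa_2$ are correct.

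The genuine gap is in the excursion-time bound $\mb E^{\widehat\zeta}_{\eta^{\mb 0}}[T_\Gamma]\le C_0 n^7 e^\beta$. Your proposed Green's-function argument on $\Xi\setminus\Gamma$, even granting reversibility via \eqref{eq:2}, does not by itself give a bound independent of $L$: the state space $\Xi\setminus\Gamma$ has order $L^2n^7$ elements by \eqref{q1}, and once the process escapes $\mc G_{\mb 0}$ it can wander through $\Omega^2_{\mb y},\Omega^3_{\mb y},\Omega^4_{\mb y}$ for \emph{all} $\mb y\in\bb T_L$, not just $\mb y$ near $\mb 0$. A naive capacity or Green's-function estimate on this space produces a factor $L^2$ which does not cancel, and the resulting error $\sqrt{L^2 n^7\kappa_2}$ is not $\bs e(\beta)$. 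The paper removes the $L^2$ by exploiting translation invariance: it projects to the quotient $\widetilde\Xi=\Xi/\!\sim$ of size $|\Xi|/L^2\le C_0 n^7$, builds an auxiliary reversible chain $\widetilde{\bs\zeta}$ on $\widetilde\Xi$ with uniform stationary measure, and then applies the bound $\mb E[H_{\bs\eta^{\mb 0}}]\le 1/\Cap_{\widetilde{\bs\zeta}}(\bs\eta,\bs\eta^{\mb 0})$ together with Thomson's principle along a short path in the quotient (Lemma~\ref{lc03}). This quotient step is the missing idea; once you insert it, your decomposition into ``escape $\mc G_{\mb 0}$ with probability $O(1/n)$'' becomes unnecessary, since Lemma~\ref{lc03} already gives the $n^7e^\beta$ bound directly for $\eta\in\mc V(\eta^{\mb 0})$.

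A minor additional point: in the second term you write $\mb P^\star[T_1<T_{\rm cp}\le H^+_\Gamma]$, but after decoupling $H^+_\Gamma$ is different for the two processes. The paper handles this by bounding $\mb P^{\zeta,\widehat\zeta}_\eta[H^\zeta_\Gamma\ge T_{\rm cp}]$ and $\mb P^{\zeta,\widehat\zeta}_\eta[H^{\widehat\zeta}_\Gamma\ge T_{\rm cp}]$ separately; you should do the same, which requires the excursion-time bound for \emph{both} chains (the argument for $\zeta$ is Lemma~\ref{lc03}, and the same equivalence-class construction works verbatim for $\widehat\zeta$).
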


\begin{pf}
Fix a subset $A$ of $\Lambda_L$ and let $\tau_1$ be the time of the
first jump so that
\[
\mb P^{\zeta}_{\eta^{\mb0}} \bigl[ H^+_{\Gamma} =
H^+_{\Gamma_A} \bigr] = \sum_{\eta\in\Xi} \mb
P^{\zeta}_{\eta^{\mb0}} \bigl[ \zeta(\tau_1) = \eta \bigr]
\mb P^{\zeta}_{\xi} [ H_{\Gamma} = H_{\Gamma_A} ].
\]
By Proposition \ref{s16}, this expression is equal to
\[
\sum_{\eta\in\mc V(\eta^{\mb0})} \mb P^{\widehat{\zeta}}_{\eta^{\mb
0}}
\bigl[ \widehat{\zeta}(\tau_1) = \eta \bigr] \mb P^{\zeta}_{\eta}
[ H_{\Gamma} = H_{\Gamma_A} ] + R(\beta),
\]
where $R(\beta)$ is a remainder absolutely bounded by $e^{-2\beta}
\kappa_1 [\lambda_\zeta(\eta^{\mb0})^{-1} + \lambda_{\widehat
\zeta} (\eta^{\mb0})^{-1}]$, and $\lambda_\zeta(\eta^{\mb0})$,
$\lambda_{\widehat{\zeta}} (\eta^{\mb0})$ are the holding rates at
$\eta^{\mb0}$ of the processes $\zeta(t)$, $\widehat{\zeta}(t)$,
respectively. By \eqref{50} and by Proposition \ref{s16},
$\lambda_{\widehat{\zeta}} (\eta^{\mb0}) \ge(1/4) e^{-2\beta}$ and
$\lambda_{\zeta} (\eta^{\mb0}) \ge e^{-2\beta} \{(1/4) -
\kappa_1\}$. This proves that $|R(\beta)| \le\kappa_1$.

We estimate $\mb P^{\zeta}_{\eta} [ H_{\Gamma} = H_{\Gamma_A} ]$ for a
fixed $\eta\in\mc V(\eta^{\mb0})$. Recall the definition of the
coupling $\mb P^{\zeta, \widehat{\zeta}}_\eta$ introduced after Lemma
\ref{bs01}. Under this coupling, on the set $\{H_{\Gamma} \le T_{\mathrm{cp}}\}$ the processes $\zeta(t)$ and $\widehat{\zeta}(t)$ reach
the set $\Gamma$ at the same time and at the same
configuration. Hence,
\[
\mb P^{\zeta}_{\eta} [ H_{\Gamma} = H_{\Gamma_A} ]
= \mb P^{\zeta, \widehat{\zeta}}_{\eta} \bigl[ H^\zeta_{\Gamma}
= H^\zeta _{\Gamma_A} \bigr] \le \mb P^{\widehat{\zeta}}_{\eta}
[ H_{\Gamma} = H_{\Gamma_A} ] + \mb P^{\zeta, \widehat{\zeta}}_{\eta}
\bigl[ H^\zeta_{\Gamma} \ge T_{\mathrm{cp}} \bigr].
\]
In this formula, $H^\zeta$ represents the hitting time for the process
$\zeta(t)$. A similar inequality holds interchanging the roles of
$\zeta$ and $\widehat{\zeta}$. Therefore,
\[
\bigl| \mb P^{\zeta}_{\eta} [ H_{\Gamma} = H_{\Gamma_A}
] - \mb P^{\widehat{\zeta}}_{\eta} [ H_{\Gamma} = H_{\Gamma_A}
] \bigr| \le \mb P^{\zeta, \widehat{\zeta}}_{\eta} \bigl[ H^\zeta_{\Gamma}
\ge T_{\mathrm{cp}} \bigr] + \mb P^{\zeta, \widehat{\zeta}}_{\eta} \bigl[
H^{\widehat{\zeta}}_{\Gamma} \ge T_{\mathrm{cp}} \bigr].
\]
We estimate the first probability on the right-hand side, the
arguments needed for the second one being similar.

Consider a sequence $T_\beta$ to be chosen later. By Lemma \ref{bs01}
and by \eqref{b01},
\[
\mb P^{\zeta, \widehat{\zeta}}_{\eta} \bigl[ H^\zeta_{\Gamma}
\ge T_{\mathrm{cp}} \bigr] \le \mb P^{\zeta}_{\eta} \bigl[
H^\zeta_{\Gamma} \ge T_\beta \bigr] + P[\mf
e_\beta\le T_\beta],
\]
where $\mf e_\beta$ is a mean $e^{\beta} \kappa^{-1}_2$ exponential
random variable. Since $1-e^{-x} \le x$, $x>0$, the second term is
bounded by $e^{-\beta} T_\beta \kappa_2$. On the other hand, by
Chebyshev's inequality and by Lemma \ref{lc03} below,
\[
\mb P^{\zeta, \widehat{\zeta}}_{\eta} \bigl[ H^\zeta_{\Gamma}
\ge T_\beta \bigr] \le \frac{1}{T_\beta} \mb E^{\zeta}_{\eta}
[ H_{\Gamma} ] \le \frac{C_0 |\Xi| e^\beta}{L^2 T_\beta} \cdot
\]
Choosing $T_\beta= e^\beta(|\Xi|/\kappa_2)^{1/2}$ we obtain that
\[
\bigl| \mb P^{\zeta}_{\eta} [ H_{\Gamma} = H_{\Gamma_A}
] - \mb P^{\widehat{\zeta}}_{\eta} [ H_{\Gamma} = H_{\Gamma_A}
] \bigr| \le \sqrt{|\Xi| \kappa_2/ L^2}.
\]
Estimating $\mb P^{\zeta, \widehat{\zeta}}_{\eta} [
H^{\widehat{\zeta}}_{\Gamma} \ge T_{\mathrm{cp}} ]$ in a similar way, we
complete the proof of the proposition.
\end{pf}

\subsection*{The trace process on equivalent classes}
Denote by $\{\tau_x \dvtx x\in\bb Z^2\}$ the group of translations in
$\bb Z^2$: For any configuration $\eta\in\Omega_{L,K}$, $\tau_x \eta$
is the configuration defined by $(\tau_x \eta)(y) = \eta(x+y)$, where
the summation is performed modulo $L$. Two configurations $\eta$,
$\xi$, are said to be equivalent, $\eta\sim\xi$, if $\eta=\tau_x\xi$
for some $x\in\bb Z^2$. Denote by $\widetilde\Xi$ the equivalence
classes of $\Xi$, and by $\bs\eta$, $\bs\xi$ the equivalent
classes, that is, the elements of $\widetilde\Xi$.

Let $\widetilde\Psi\dvtx  \Xi\to\widetilde\Xi$ be the function which
associates to a configuration its equivalence class. Since the
dynamics is translation invariant, for all $\bs\xi\in\widetilde\Xi$
\[
R_\zeta\bigl(\eta, \widetilde\Psi^{-1}(\bs\xi)\bigr) =
R_\zeta\bigl(\eta', \widetilde\Psi^{-1}(\bs
\xi)\bigr)\qquad \mbox{if } \widetilde\Psi(\eta) = \widetilde\Psi \bigl(
\eta'\bigr).
\]
In particular,
\[
\bs\zeta(t) = \widetilde\Psi\bigl(\zeta(t)\bigr)
\]
is a Markov chain which jumps from a class $\bs\eta$ to a class $\bs
\xi$ at rate $\mb R_\zeta(\bs\eta, \bs\xi):= R_\zeta(\eta, \bs\xi)
= \sum_{\xi\in\bs\xi} R_\zeta(\eta, \xi)$ if $\eta$ belongs to the
equivalent class $\bs\eta$. Moreover, the probability measure
$\mu_{\bs\zeta}$ on $\widetilde\Xi$ defined by $\mu_{\bs\zeta} (\bs
\eta) = \mu_{\zeta} (\Psi^{-1}(\bs\eta))$ is reversible for the
Markov chain $\bs\zeta(t)$.

\subsection*{An auxiliary dynamics}

Denote by $\bs\eta^{\mb0}$ the equivalence class of the
configurations in $\Gamma$. We introduce in this subsection a process
$\widetilde{\bs\zeta} (t)$ which behaves as $\bs\zeta(t)$ until
$\bs\zeta(t)$ hits the class $\bs\eta^{\mb0}$. Let $\widetilde{\bs
\zeta} (t)$ be the Markov process on $\widetilde\Xi$ whose jump
rates, denoted by $\mb R_{\widetilde{\bs\zeta}}(\bs\eta,\bs\xi)$,
are defined as follows:
\[
\mb R_{\widetilde{\bs\zeta}}(\bs\eta,\bs\xi) =
\mb R_{\bs\zeta}(\bs\eta,\bs\xi),\qquad
\bs\eta\in\widetilde\Xi\setminus\bigl\{\bs\eta^{\mb0}\bigr\}, \bs\xi\in
\widetilde\Xi.
\]
It is clear that we may couple $\bs\zeta(t)$ and $\widetilde{\bs
\zeta} (t)$ in such a way that starting from a configuration $\bs
\eta\in\widetilde\Xi\setminus\{\bs\eta^{\mb0}\}$, both processes
evolve together until they reach simultaneously $\bs\eta^{\mb
0}$. Let $\mu_{\widetilde{\bs\zeta}}$ be the uniform measure on
$\widetilde\Xi$. Since $\mu_{\bs\zeta}(\bs\eta) = \mu_{\bs
\zeta}(\bs\xi)$ for $\bs\eta, \bs\xi\in\widetilde\Xi\setminus
\{\bs\eta^{\mb0}\}$, and since the process $\bs\zeta(t)$ is
reversible, the measure $\mu_{\widetilde{\bs\zeta}}$ satisfies the
detailed balance conditions on $\widetilde\Xi\setminus\{\bs
\eta^{\mb0}\}$ for the rates $\mb R_{\widetilde{\bs\zeta}}$. We
define $\mb R_{\widetilde{\bs\zeta}}(\bs\eta^{\mb0},\bs\xi)$
$\bs\xi\in\widetilde\Xi\setminus\{\bs\eta^{\mb0}\}$ to fulfill
the detailed balance conditions with respect to the uniform measure
$\mu_{\widetilde{\bs\zeta}}$:
\[
\mb R_{\widetilde{\bs\zeta}}\bigl(\bs\eta^{\mb0},\bs\xi\bigr) = \mb
R_{\widetilde{\bs\zeta}}\bigl(\bs\xi,\bs\eta^{\mb0}\bigr),\qquad \xi\in\widetilde\Xi
\setminus\bigl\{\bs\eta^{\mb0}\bigr\}.
\]

\begin{lemma}
\label{lc03}
There exists a finite constant $C_0$ such that
\[
\max_{\eta\in\mc V(\eta^{\mb0})} \mb E^{\zeta}_{\eta} [
H_{\Gamma} ] \le C_0 \bigl(|\Xi|/L^2\bigr)
e^\beta.
\]
\end{lemma}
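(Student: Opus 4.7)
The plan is to reduce the hitting-time estimate to a capacity bound for the reversible quotient chain $\widetilde{\bs\zeta}$ on $\widetilde\Xi$. By translation invariance of the Kawasaki dynamics, $\bs\zeta(t) = \widetilde\Psi(\zeta(t))$ is itself a Markov chain, and the $\zeta$-hitting time of $\Gamma$ coincides with the $\bs\zeta$-hitting time of $\bs\eta^{\mb 0}$. Since $\widetilde{\bs\zeta}$ and $\bs\zeta$ share the same rates on $\widetilde\Xi \setminus \{\bs\eta^{\mb 0}\}$, the law of $H_{\bs\eta^{\mb 0}}$ starting from any $\bs\eta \neq \bs\eta^{\mb 0}$ is identical under the two processes, whence
\[
\mb E^{\zeta}_\eta[H_\Gamma] \;=\; \mb E^{\widetilde{\bs\zeta}}_{\widetilde\Psi(\eta)}[H_{\bs\eta^{\mb 0}}]\;.
\]

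Since $\widetilde{\bs\zeta}$ is reversible with respect to the uniform probability measure $\mu_{\widetilde{\bs\zeta}}$, the standard continuous-time identity $\mb E^{\widetilde{\bs\zeta}}_{\bs\eta}[H_{\bs\eta^{\mb 0}}] + \mb E^{\widetilde{\bs\zeta}}_{\bs\eta^{\mb 0}}[H_{\bs\eta}] = 1/\Cap_{\widetilde{\bs\zeta}}(\bs\eta, \bs\eta^{\mb 0})$ yields $\mb E^{\widetilde{\bs\zeta}}_{\bs\eta}[H_{\bs\eta^{\mb 0}}] \le 1/\Cap_{\widetilde{\bs\zeta}}(\bs\eta, \bs\eta^{\mb 0})$. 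The task therefore reduces to showing
\[
\Cap_{\widetilde{\bs\zeta}}(\widetilde\Psi(\eta), \bs\eta^{\mb 0}) \;\ge\; c_0\, e^{-\beta}/|\widetilde\Xi|
\]
uniformly for $\eta \in \mc V(\eta^{\mb 0})$, with $c_0>0$ independent of $\beta$.

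For $\eta \in \{\eta^{i,i}_{\mb 0}, \eta^{i,i-1}_{\mb 0}\}$, Lemma \ref{s19} gives $\mb R(\eta, \eta^{\mb 0}) \ge 1/4$, and Proposition \ref{s08} transfers this to $\bs R_{\widetilde{\bs\zeta}}(\widetilde\Psi(\eta), \bs\eta^{\mb 0}) \ge c\, e^{-\beta}$ for $\beta$ large, so the direct-edge conductance $\mu_{\widetilde{\bs\zeta}}(\widetilde\Psi(\eta))\, \bs R_{\widetilde{\bs\zeta}}(\widetilde\Psi(\eta), \bs\eta^{\mb 0})$ already delivers the desired bound. For the remaining configurations in $\mc V(\eta^{\mb 0})$, Lemma \ref{s19} supplies a path of length at most two to $\eta^{\mb 0}$ through intermediate states in $\mc V(\eta^{\mb 0})$, every edge of which has rate bounded below by a constant times $e^{-\beta}$; the series-resistance formula then gives the same asymptotic lower bound on the capacity. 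Finally, since every $\eta \in \Xi$ has particles confined to a connected region of linear size $O(n) \ll L$, its translation stabilizer on $\bb T_L$ is trivial, so each equivalence class has exactly $|\bb T_L|$ elements and $|\widetilde\Xi| \le C_0\, |\Xi|/L^2$. Chaining the three ingredients yields the claimed bound.

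The only technically delicate point is ensuring the constant $c_0$ in the capacity lower bound is genuinely independent of $\beta$; this amounts to checking that the $O(e^{-\beta}\kappa_2)$ error in Proposition \ref{s08} does not swallow the direct-rate lower bounds of Lemma \ref{s19}, which is automatic once $\kappa_2\to 0$.
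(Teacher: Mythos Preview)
Your proof is correct and follows essentially the same route as the paper: reduce to the reversible quotient chain $\widetilde{\bs\zeta}$, bound the hitting time by $1/\Cap_{\widetilde{\bs\zeta}}(\bs\eta,\bs\eta^{\mb 0})$, and then lower-bound the capacity via a short path in $\mc V(\eta^{\mb 0})$ whose edge rates are controlled by Lemma~\ref{s19} and Proposition~\ref{s08} (the paper phrases this last step through Thomson's principle with an explicit unit flow, which is the same computation). One small imprecision: for configurations of the form $\eta^{i,i+2}_{\mb 0}$ the path to $\eta^{\mb 0}$ through $\mc V(\eta^{\mb 0})$ has length three, not two; this does not affect the argument, only the constant.
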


\begin{pf}
Fix a configuration $\eta$ in $\mc V(\eta^{\mb0})$ and denote by
$\bs\eta$ the equivalence class of $\eta$. Recall the
definition of the processes $\bs\zeta(t)$, $\widetilde{\bs
\zeta}(t)$ introduced above. Let $\mb P^{\bs
\zeta}_{\bs\eta}$, $\mb P^{\widetilde{\bs\zeta}}_{\bs\eta}$ be the
distributions of the processes $\bs\zeta(t)$, $\widetilde{\bs
\zeta}(t)$, respectively, starting from $\bs\eta$. Expectations
with respect to $\mb P^{\bs\zeta}_{\bs\eta}$, $\mb P^{\widetilde
{\bs\zeta}}_{\bs\eta}$ are represented by $\mb E^{\bs
\zeta}_{\bs\eta}$, $\mb E^{\widetilde{\bs\zeta}}_{\bs\eta}$,
respectively.

Denote by $H_{\bs\eta^{\mb0}}$ the time the processes
$\bs\zeta(t)$, $\widetilde{\bs\zeta}(t)$ hit the equivalence class
$\bs\eta^{\mb0}$. Since $H_{\bs\eta^{\mb0}} = H_\Gamma$
and since the processes $\bs\zeta(t)$, $\widetilde{\bs
\zeta}(t)$ evolve together until the hitting time of $\bs\eta^{\mb0}$,
\[
\mb E^{\zeta}_{\eta} [ H_{\Gamma} ] = \mb
E^{\bs\zeta}_{\bs\eta} [ H_{\bs\eta^{\mb0}} ] = \mb
E^{\widetilde{\bs\zeta}}_{\bs\eta} [ H_{\bs\eta^{\mb0}} ].
\]

Since the chain $\widetilde{\bs\zeta}(t)$ is reversible, by
\cite{bl2}, Proposition 6.10,
\[
\mb E^{\widetilde{\bs\zeta}}_{\bs\eta} [ H_{\bs\eta^{\mb0}} ] \le
\frac{1}{\Cap_{\widetilde{\bs\zeta}} (\bs\eta,\bs\eta^{\mb
0})},
\]
where $\Cap_{\widetilde{\bs\zeta}}$ stands for the capacity
associated to the process $\widetilde{\bs\zeta}$. To fix ideas,
assume that $\eta=\eta^{0,2}_{\mb0}$ and consider the unitary flow
$\Phi$ from $\bs\eta$ to $\bs\eta^{\mb0}$ given by $\Phi(\bs
\eta^{0,2}_{\mb0}, \bs\eta^{0,1}_{\mb0}) = \Phi(\bs\eta^{0,1}_{\mb
0}, \bs\eta^{0,0}_{\mb0}) = \Phi(\bs\eta^{0,0}_{\mb0}, \bs
\eta^{\mb0}) =1$, where $\bs\eta^{i,j}_{\mb0}$ represents the
equivalence class of $\eta^{i,j}_{\mb0}$. By Thomson's principle
\cite{g1}, since $\mb R_{\widetilde{\bs\zeta}}(\bs\eta, \bs\xi) =
\mb R_{\bs\zeta}(\bs\eta,\bs\xi) \ge R_{\zeta}(\eta,\xi)$, $\eta\in
\Xi\setminus\Gamma$, and since $\mu_{\widetilde{\bs\zeta}}$ is the
uniform measure on $\widetilde\Xi$,
\begin{eqnarray*}
\frac{1}{\Cap_{\widetilde{\bs\zeta}}
(\bs\eta,\bs\eta^{\mb0})} & \le &\frac{1}{\mu_{\widetilde
{\bs\zeta}} (\bs\eta) } \biggl\{ \frac{1}{\mb R_{\bs\zeta} (\bs\eta^{0,2}_{\mb0}, \bs\eta^{0,1}_{\mb
0})} +
\frac{1}{\mb R_{\bs\zeta}(\bs\eta^{0,1}_{\mb0}, \bs\eta^{0,0}_{\mb0})} + \frac{1}{\mb R_{\bs\zeta}(\bs\eta^{0,0}_{\mb0}, \bs\eta^{\mb0})} \biggr\}
\\
& \le& |\widetilde\Xi| \biggl\{ \frac{1}{R_\zeta(\eta^{0,2}_{\mb0}, \eta^{0,1}_{\mb0})} + \frac{1}{R_\zeta(\eta^{0,1}_{\mb0}, \eta^{0,0}_{\mb0})} +
\frac{1}{R_\zeta(\eta^{0,0}_{\mb0}, \eta^{\mb0})} \biggr\} \cdot
\end{eqnarray*}
By \eqref{41} and \eqref{42}, the previous sum is bounded by $C_0
|\widetilde\Xi| e^\beta$ for some finite constant $C_0$, which proves
the lemma since $|\widetilde\Xi| = |\Xi|/L^2$.
\end{pf}

\section{Proof of Theorem \texorpdfstring{\protect\ref{s10}}{1.1}}
\label{sec4}

We prove in this section Theorem \ref{s10}. Instead of working in the
torus $\bb T_L$ we will consider the process $X(t)$ as a random walk
on~$\bb Z^2$. Let $\bar X(t)$ the random walk on~$\bb Z^2$ which jumps
from $\mb x\in\bb Z^2$ to $\mb y\in\bb Z^2$ at rate $r_\beta(\mb y -
\mb x)$ and let $\bar Z^\beta(t) = \bar X(t\theta_\beta\ell^2)/\ell$.
To prove Theorem \ref{s10}, it is enough to show that $\bar Z^\beta(t)$
converges in the Skorohod topology to a two-dimensional Brownian motion
as $\beta\to\infty$.

In view of the representation of the process $\bar X(t)$ in terms of
the orthogonal martingales $M^{\mb x}_t$, $\bar Z^\beta(t) = (\bar
Z^\beta_1(t), \bar Z^\beta_2(t))$ is a two-dimensional martingale.
Since $r_\beta(x_1, x_2) = r_\beta(\pm x_1, \pm x_2)$, the
predictable quadratic covariation of $\bar Z^\beta_i(t)$, $\bar
Z^\beta_j(t)$, $1\le i,j\le2$, denoted by $\langle  \bar Z^\beta_i, \bar
Z^\beta_j \rangle_t$, is given by
\[
\bigl\langle \bar Z^\beta_i, \bar Z^\beta_j
\bigr\rangle_t = t \theta_\beta\sum
_{\mb x = (x_1,x_2)
\in\Lambda_L} x_i x_j r_\beta(\mb
x) = t \theta_\beta \delta_{i,j} \sum
_{\mb x = (x_1,x_2)
\in\Lambda_L} x_i^2 r_\beta(\mb x),
\]
where $\Lambda_L=\{-L, \ldots, L\}^2$. Hence, by the definition of
$\theta_\beta$,
%
\begin{equation}
\label{cl01} \bigl\langle \bar Z^\beta_i, \bar
Z^\beta_j \bigr\rangle_t = (1/2) t
\delta_{i,j}.
\end{equation}

Let $\Delta\bar Z^\beta(t) = \bar Z^\beta(t) - \bar Z^\beta(t-)$,
$t\ge0$. Since $\bar Z^\beta(0) = 0$, by the martingale central
limit theorem, \cite{ek1}, Theorem VII.1.4, in view of \eqref{cl01},
to prove that $\bar Z^\beta(t)$ converges to a two-dimensional
Brownian motion with diffusion matrix equal to $ (1/2) t \bb I$,
we have to show that
%
\begin{equation}
\label{13} \lim_{\beta\to\infty} \mb E \Bigl[ \sup
_{s\le t} \bigl\Vert\Delta\bar Z^\beta(s)\bigr\Vert \Bigr] = 0
\end{equation}
for all $t>0$.

In the remaining of this section, we prove \eqref{13}. Fix
$\delta>0$. By definition of $\bar Z^\beta(s)$, $\sup_{s\le t} \Vert
\Delta\bar Z^\beta(s)\Vert= \sup_{s\le t \ell^2 \theta_\beta} \Vert
\Delta\bar X(s)\Vert/\ell$. Since the Poisson processes $N^{\mb
x}_t$ have no common jumps, for all $s\ge0$
\[
\bigl \Vert\Delta\bar X(s)\bigr\Vert =\biggl \Vert\sum_{\mb x\in\Lambda_L} \mb x
\bigl[N^{\mb x}_s - N^{\mb x}_{s-}\bigr]
\biggr\Vert = \sum_{\mb x\in\Lambda_L} \Vert\mb x \Vert
\bigl[N^{\mb x}_s - N^{\mb x}_{s-}\bigr]
\]
and
\[ \sum_{\mb x \in\Lambda_{\delta\ell}} \Vert\mb x \Vert
\bigl[N^{\mb x}_s - N^{\mb x}_{s-}\bigr] \le
2 \delta\ell \sum_{\mb x \in\Lambda_{\delta\ell}} \bigl[N^{\mb x}_s
- N^{\mb x}_{s-}\bigr] \le 2 \delta\ell,
\]
where $\Lambda_k = \{-k, \ldots, k\}^2$. The expectation appearing in
\eqref{13} is thus bounded by
\[
2\delta + \sum_{\mb x\notin\Lambda_{\delta\ell}} \frac{\Vert\mb
x\Vert}{\ell} \mb E
\Bigl[ \sup_{s\le t \ell^2 \theta_\beta} \bigl[N^{\mb x}_s -
N^{\mb x}_{s-}\bigr] \Bigr] = 2\delta + \sum
_{\mb x\notin\Lambda_{\delta\ell}} \frac{\Vert\mb x\Vert}{\ell} \mb P \bigl[ N^{\mb x}_{t \ell^2 \theta_\beta}
\ge1 \bigr].
\]
Since $\mb P [ N^{\mb x}_{t \ell^2 \theta_\beta} \ge1 ] \le\mb E [
N^{\mb x}_{t \ell^2 \theta_\beta} ] = t \ell^2 \theta_\beta
r_\beta(\mb x)$, to prove \eqref{13} it is enough to show that
for all $\delta>0$
%
\begin{equation}
\label{15} \lim_{\beta\to\infty} \ell \theta_\beta \sum
_{\Vert\mb x\Vert >
\delta\ell} \Vert\mb x\Vert r_\beta(\mb x) = 0.
\end{equation}

Replacing the Euclidean norm $\Vert\cdot\Vert$ by the sum norm
$|\cdot|$ and recalling formula~\eqref{38} for $r_\beta(\mb x)$, we
bound the sum appearing in \eqref{15} by
%
\begin{equation}
\label{34} \ell \theta_\beta \sum_{|\mb x| > \delta\ell}
|\mb x| R_\xi\bigl(\eta^{\mb0},\eta^{\mb x}\bigr) =
\ell \theta_\beta \lambda_\zeta\bigl(\eta^{\mb0}\bigr)
\sum_{|\mb x| > \delta\ell} |\mb x| \mb P^\zeta_{\eta^{\mb0}}
\bigl[ H^+_{\Gamma} = H_{\eta^{\mb x}} \bigr].
\end{equation}
By equation (6.9) in \cite{bl2}, $\lambda_\zeta(\eta^{\mb0}) \le
\lambda(\eta^{\mb0}) = 8 e^{-2\beta} (1+ O(n) e^{-\beta})$. The
sum appearing in \eqref{15} is thus bounded by
\[
C_0 \ell \theta_\beta e^{-2\beta} \sum
_{|\mb x| > \delta\ell} |\mb x| \mb P^\zeta_{\eta^{\mb0}} \bigl[
H^+_{\Gamma} = H_{\eta^{\mb x}} \bigr] = C_0 \ell
\theta_\beta e^{-2\beta} \sum_{k= [\delta\ell]+1}^{2L}
k \mb P^\zeta_{\eta^{\mb0}} \bigl[ H^+_{\Gamma} =
H_{\lozenge_k} \bigr],
\]
where $\lozenge_k = \{\eta^{\mb x}\in\Omega_{L,K} \dvtx |\mb x|=k\}$ and
$[a]$ stands for the integer part of $a>0$. Let $\blacklozenge_k =
\{\eta^{\mb x}\in\Omega_{L,K} \dvtx |\mb x|\ge k\}$. With this notation
we may rewrite the previous sum as
\begin{eqnarray*}
&& \sum_{k= [\delta\ell]+1}^{2L} k \bigl\{ \mb
P^\zeta_{\eta^{\mb0}} \bigl[ H^+_{\Gamma} = H_{\blacklozenge_k}
\bigr] - \mb P^\zeta_{\eta^{\mb0}} \bigl[ H^+_{\Gamma
} =
H_{\blacklozenge_{k+1}} \bigr] \bigr\}
\\
&&\qquad \le C_0 \ell \mb P^\zeta_{\eta^{\mb0}} \bigl[
H^+_{\Gamma} = H_{\blacklozenge_{[\delta\ell]}} \bigr] + \sum
_{k= [\delta\ell]+1}^{2L} \mb P^\zeta_{\eta^{\mb0}}
\bigl[ H^+_{\Gamma} = H_{\blacklozenge_{k}} \bigr],
\end{eqnarray*}
where we performed a summation by parts in the last step. By
Proposition \ref{lc01} and Lemma \ref{lc02}, the previous expression is
bounded by
\[
L \kappa_3 + C_0 \gamma^{([\delta\ell] - 4)/3} \biggl\{\ell +
\frac{\gamma^{1/3}}{1-\gamma^{1/3}} \biggr\},
\]
where $\kappa_3 = \kappa_1 + \sqrt{\kappa_2 |\Xi|/L^2}$ and $\gamma$
is given by \eqref{c04} below. By \eqref{c05}, $\gamma\le
e^{-c_0/2n}$. The previous expression is thus bounded by $L
\kappa_3 + (\ell+n) e^{-c_0 \delta\ell/n}$.

Up to this point, we have shown that the sum appearing in \eqref{15} is
bounded by
\[
\ell \theta_\beta e^{-2\beta} \bigl\{ L \kappa_3 + (
\ell+n) e^{-c_0
\delta\ell/n} \bigr\}.
\]
In view of the assumptions of the theorem and of \eqref{13}, this
proves \eqref{15} and concludes the argument. \qed\vspace*{6pt}

We conclude this section with two results used in the proof of Theorem
\ref{s10}. Recall the definition of $\mc G_{\mb x}$ given in
\eqref{45}. Let $\mf E_{\mb x}$, $\mb x\in\bb T_L$, be the event
that the process $\widehat{\zeta}(t)$ leaves the set $\mc G_{\mb x}$
without visiting the configuration $\eta^{\mb x}$,
\[
\mf E_{\mb x} = \bigl\{ H^{\widehat{\zeta}}_{\mc G_{\mb x}^c} <
H^{\widehat{\zeta}}_{\eta^{\mb x}} \bigr\},
\]
where $H^{\widehat{\zeta}}_{\mc C}$, $\mc C\subset\Xi$, stands for the
hitting time of $\mc C$ by $\widehat{\zeta}$:
\[
H^{\widehat{\zeta}}_{\mc C} = \inf\bigl\{t>0 \dvtx \widehat{\zeta}(t) \in
\mc C\bigr\}.
\]
Let
%
\begin{equation}
\label{c03} \varrho:= \max_{\eta\in\mc V(\eta^{\mb0})} \mb P^{\widehat{\zeta}}_{\eta}
[\mf E_{\mb0}].
\end{equation}

For $\mb x\in\bb T_L$, let $\mc L_{\mb x} = \{\zeta^{\mf l, j}_{\mb x}, 0\le j\le3\}$, $\mc S_{\mb x} = \{\zeta^{\mf s, j}_{\mb x}, 0\le
j\le3\}$, and
\[
\mc G = \bigcup_{\mb x\in\bb T_L} \mc G_{\mb x}, \qquad \mc L =
\bigcup_{\mb x\in\bb T_L} \mc L_{\mb x},\qquad  \mc S = \bigcup
_{\mb x\in\bb T_L} \mc S_{\mb x}.
\]
Define
%
\begin{equation}
\label{c04} \gamma:= \varrho + ( 1 - \varrho ) \max_{\xi\in\mc L_0}
\mb P^{\widehat{\zeta}}_{\xi} [H_{\mc L \setminus\mc L_{0}} < H_{\mc G} ].
\end{equation}

\begin{lemma}
\label{lc02}
For every $k\ge5$,
\[
\mb P^{\widehat{\zeta}}_{\eta^{\mb0}} \bigl[ H^+_{\Gamma} =
H_{\blacklozenge_k} \bigr] \le \gamma^{(k-4)/3}.
\]
\end{lemma}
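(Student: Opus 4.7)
The strategy is to iterate a macroscopic one-step bound derived from the graph of $\widehat\zeta(t)$ sketched in Figure \ref{fig9}. In that graph, the chain moves between square clusters $\mc G_{\mb y}$ and rectangle clusters $\mc L_{\mb y}\cup\mc S_{\mb y}$, and consecutive clusters connected by an edge lie at sum-distance at most $3$. The quantity $\gamma$ from \eqref{c04} is engineered so that, in a single excursion from $\Gamma$, the probability of either exiting $\mc G_{\mb 0}$ or, having exited, moving from an $\mc L$-cluster to a different one before returning to $\mc G$, is at most $\gamma$.

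First, I would analyze the initial departure. Starting from $\eta^{\mb 0}$, by the strong Markov property at the first jump and the definition of $\varrho$ in \eqref{c03},
\begin{equation*}
\mb P^{\widehat\zeta}_{\eta^{\mb 0}}\big[H_{\mc G_{\mb 0}^c}<H^+_{\eta^{\mb 0}}\big]\;\le\;\varrho\;;
\end{equation*}
on the complementary event, $H^+_\Gamma=H_{\eta^{\mb 0}}\notin\blacklozenge_k$ for $k\ge 1$, so no contribution arises. On the exit event, direct inspection of the rates $\mb R(\eta^{i,j}_{\mb 0},\cdot)$ from Section \ref{sec3} shows that the first configuration outside $\mc G_{\mb 0}$ belongs either to a neighboring square cluster $\mc G_{\mb y}$ or to a rectangle cluster at a position within sum-distance $1$ of $\mb 0$.

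For the iteration, I would introduce stopping times $\sigma_0=0<\sigma_1<\sigma_2<\cdots\le H^+_\Gamma$ where $\sigma_{i+1}$ is the first time after $\sigma_i$ that $\widehat\zeta$ enters a rectangle cluster $\mc L_{\mb y_{i+1}}\cup\mc S_{\mb y_{i+1}}$ with $\mb y_{i+1}\ne \mb y_i$. By repeated application of the strong Markov property at the $\sigma_i$, translation invariance, and the $\mc L\leftrightarrow\mc S$ symmetry that reduces $\mc S$-transitions to the $\mc L$-transitions quantified in \eqref{c04}, each macrostep contributes a factor of at most $\gamma$, yielding
\begin{equation*}
\mb P^{\widehat\zeta}_{\eta^{\mb 0}}\big[\sigma_m<H^+_\Gamma\big]\;\le\;\gamma^m\;.
\end{equation*}
On the complementary event $\{\sigma_m\le H^+_\Gamma<\sigma_{m+1}\}$, the square $\eta^{\mb x}$ at which $\widehat\zeta$ rejoins $\Gamma$ satisfies $|\mb x|\le 3m+4$, because consecutive rectangle clusters differ by at most $3$ in sum-norm, the initial exit from $\mc G_{\mb 0}$ contributes at most $1$, and the final jump back into a square contributes at most $3$. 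Taking $m=\lceil(k-4)/3\rceil$ gives the desired bound $\gamma^{(k-4)/3}$.

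The main obstacle is the rigorous justification of the two geometric claims: that adjacent rectangle clusters in the $\widehat\zeta$-graph lie within sum-distance $3$, and that the first cluster reached upon exiting $\mc G_{\mb 0}$ lies within sum-distance $1$ of $\mb 0$. Both require a careful case analysis of the rates $\mb R(\eta^{i,j}_{\mb x},\cdot)$ and $\mb R(\zeta^{\mf a,j}_{\mb x},\cdot)$ from Section \ref{sec3}, together with the $90^\circ$ rotation symmetry needed to let the bound in \eqref{c04}, stated only for $\mc L$-clusters, also govern transitions through $\mc S$-clusters.
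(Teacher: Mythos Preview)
Your probability bound $\mb P^{\widehat\zeta}_{\eta^{\mb 0}}[\sigma_m < H^+_\Gamma] \le \gamma^m$ can indeed be justified along the lines you sketch, but the companion distance claim has a genuine gap. You assert that on $\{\sigma_m \le H^+_\Gamma < \sigma_{m+1}\}$ the landing point satisfies $|\mb x|\le 3m+4$ because ``consecutive rectangle clusters differ by at most $3$ in sum-norm''. This conflates graph-adjacency in $\widehat\zeta$ with $\sigma$-consecutiveness. Between $\sigma_i$ and $\sigma_{i+1}$ the process may visit an unbounded chain of square clusters $\mc G_{\mb z_1},\mc G_{\mb z_2},\dots$ via excursions through $\Omega^2$, without ever entering a new rectangle cluster or touching $\Gamma$; the trace of $\widehat\zeta$ on $\Xi_{\rm e}=\bigcup_{\mb x}(\mc G_{\mb x}\cup\mc L_{\mb x}\cup\mc S_{\mb x})$ does admit direct $\mc G_{\mb x}\to\mc G_{\mb x\pm e_i}$ transitions. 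Each such $\mc G$-escape costs a factor $\varrho$ in probability, but your $\sigma$-bookkeeping charges only one factor $\gamma$ for the entire segment, so $|\mb y_{i+1}-\mb y_i|$ is not bounded by any fixed constant and the deduction fails. The same issue already spoils the claim that the first rectangle cluster lies within distance~$1$ of~$\mb 0$.

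The paper sidesteps this by tracking distance rather than cluster type. It works with the trace $\widehat\chi$ of $\widehat\zeta$ on $\Xi_{\rm e}$, verifies by inspection that each jump of $\widehat\chi$ changes the index by at most~$2$, and introduces the hitting times $\tau_j$ of the level sets $\Pi_j=\bigcup_{|\mb x|\ge j}(\mc G_{\mb x}\cup\mc L_{\mb x}\cup\mc S_{\mb x})$. At each level $j$ it defines an event $\mf E_j$ (escape the current $\mc G$-cluster if in $\mc G$; change rectangle cluster before reaching $\mc G$, or else reach $\mc G$ and then escape it, if in $\mc L\cup\mc S$) which must hold whenever $H^+_\Gamma=H_{\blacklozenge_k}$ and $j\le k-4$. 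Since $\mf E_j$ is $\mc F_{\tau_{j+3}}$-measurable and has conditional probability at most $\gamma$, iterating over $\mf E_3,\mf E_6,\dots$ yields $\gamma^{(k-4)/3}$. Your scheme could be repaired by replacing the $\sigma_i$ with these level-set times, at which point it becomes the paper's argument.
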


\begin{pf}
As the chain $\widehat{\zeta}(t)$ jumps from $\eta^{\mb0}$ only to
$\mc V(\eta^{\mb0})$, it is enough to show that
\[
\max_{\eta\in\mc V(\eta^{\mb0})} \mb P^{\widehat{\zeta}}_{\eta} [
H_{\Gamma} = H_{\blacklozenge_k} ] \le \gamma^{(k-4)/3}.
\]
Fix $\eta\in\mc V(\eta^{\mb0})$. To prove the lemma, we deplete
$\Xi$ of its inner configurations, and we keep only the extremal
ones. Let
\[
\Xi_{ \mathrm e} = \bigcup_{\mb x\in\bb T_L} \{ \mc
G_{\mb x} \cup \mc L_{\mb x} \cup\mc S_{\mb x} \}.
\]

Denote by $\widehat\chi(t)$ the trace of $\widehat{\zeta}(t)$ on
$\Xi_{ \mathrm e}$. An inspection shows that the process $\widehat\chi
(t)$ may
only jump from a configuration in $\mc G_{\mb x}$ to a configuration
in $\mc G_{\mb x} \cup\mc G_{\mb x \pm e_i} \cup\mc L_{\mb x} \cup
\mc L_{\mb x+e_2} \cup\mc L_{\mb x-e_1} \cup\mc L_{\mb x-e_1+e_2}
\cup\mc S_{\mb x} \cup\mc S_{\mb x+e_1} \cup\mc S_{\mb x-e_2} \cup
\mc S_{\mb x+e_1-e_2}$. Similarly, the process $\widehat\chi(t)$ may
only jump from a configuration in $\mc L_{\mb x}$ to a configuration
in $\mc L_{\mb x} \cup\mc L_{\mb x\pm e_1} \cup\mc L_{\mb x \pm e_2}
\cup\mc G_{\mb x} \cup\mc G_{\mb x + e_1} \cup\mc G_{\mb x - e_2}
\cup\mc G_{\mb x + e_1-e_2}$. Finally, the process $\widehat\chi
(t)$ may only jump from a configuration in $\mc S_{\mb x}$ to a
configuration in $\mc S_{\mb x} \cup\mc S_{\mb x\pm e_1} \cup\mc
S_{\mb x \pm e_2} \cup\mc G_{\mb x} \cup\mc G_{\mb x - e_1} \cup\mc
G_{\mb x + e_2} \cup\mc G_{\mb x - e_1+e_2}$. In all cases, the
distance from the index $\mb x$ of the starting set to the index $\mb
y$ of the ending set is at most $2$. The neighborhood of a
configuration in $\mc L_{\mb x}$ is illustrated in Figure~\ref{fig7}.

\begin{figure}[t]

\includegraphics{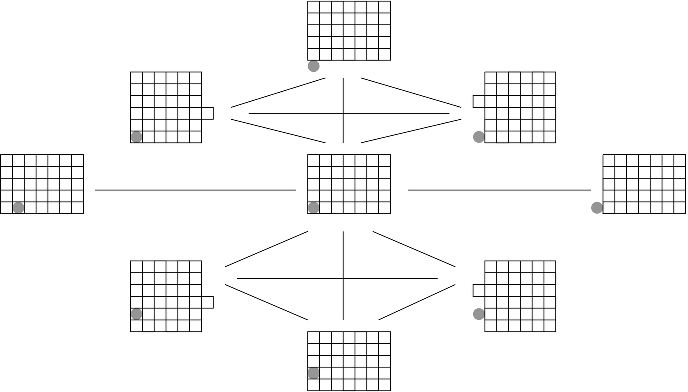}

\caption{The neighborhood of a configuration $\zeta^{\mf l,
\cdot}_{\mb0}$ for $n=6$. We omitted the extra square of the
configurations $\zeta^{\mf l, \cdot}_{\mb x}$. The gray dot
represents the site $\mb0$.}
\label{fig7}
\end{figure}

Recall that $\widehat{\zeta}(0) = \eta^{\mb0}$. Let $\Pi_j =
\bigcup_{\mb x; |\mb x| \ge j} \{\mc G_{\mb x} \cup\mc L_{\mb x} \cup
\mc S_{\mb x}\}$, $j\ge1$, and denote by $\tau_j$ the hitting time of
the set $\Pi_j$ by the process $\widehat{\zeta}(t)$:
\[
\tau_j = \inf\bigl\{ t>0 \dvtx \widehat{\zeta}(t) \in
\Pi_j\bigr\},\qquad j\ge0.
\]
It follows from the conclusions of the previous paragraph that
$\widehat\chi(t)\in\Xi_{ \mathrm e}\setminus\Pi_{j+2}$ if $\widehat\chi
(t-)\in\Xi_{ \mathrm e}\setminus\Pi_j$. In particular, $\tau_j < \tau_{j+2}$
for $j\ge1$.

Let $\mb Y \dvtx \Xi_{ \mathrm e} \to\bb T_L$ be given by
\[
\mb Y (\eta) = \sum_{\mb x\in\bb T_L} \mb x \mb1\{\eta\in\mc
G_{\mb x} \cup\mc L_{\mb x} \cup \mc S_{\mb x}\},
\]
and let $\mb Y_j = \mb Y(\widehat{\zeta}(\tau_j))$. In the formulas
below, $\mc A=\mc L$ if $\mf a =\mf l$ and $\mc A=\mc S$ if $\mf a
=\mf s$. Consider the
events
\begin{eqnarray*}
 \mf E^{1}_j &=& \bigl\{\widehat{\zeta}(
\tau_j) \in\mc G \bigr\} \cap\mf E_{\mb Y_j},
\\
 \mf E^{2, \mf a}_j &=& \bigl\{\widehat{\zeta}(
\tau_j) \in\mc A \bigr\} \cap\{H_{\mc A \setminus\mc A_{\mb Y_j}} \circ
\theta_{\tau_j} < H_{\mc G} \circ\theta_{\tau_j} \},
\\
 \mf E^{3, \mf a}_j &= &\bigl\{\widehat{\zeta}(
\tau_j) \in\mc A \bigr\} \cap\{H_{\mc A \setminus\mc A_{\mb Y_j}} \circ
\theta_{\tau_j} > H_{\mc G} \circ\theta_{\tau_j} \} \cap \mf
E_{\mb Y(\widehat{\zeta}(H_{\mc G} \circ\theta_{\tau_j})) },
\\
\mf E_j &=& \mf E^{1}_j \cup\bigcup
_{\mf a=\mf l, \mf s} \bigl\{ \mf E^{2, \mf a}_j \cup\mf
E^{3, \mf a}_j \bigr\}.
\end{eqnarray*}
Note that $\mf E_j$ is measurable with respect to the $\sigma$-algebra
induced by the stopping time $\tau_{j+3}$ of the Markov process
$\widehat{\zeta}(t)$, which we denote by $\mc
F_{\tau_{j+3}}$. Moreover, $\{H^+_{\Gamma} = H_{\blacklozenge_k}\}
\subset\bigcap_{1\le j\le k-4} \mf E_{j}$. Hence,
\[
\mb P^{\widehat{\zeta}}_{\eta} [ H_{\Gamma} = H_{\blacklozenge_k} ]
\le \mb P^{\widehat{\zeta}}_{\eta} \Biggl[ \bigcap
_{j=1}^{k-4} \mf E_{j} \Biggr].
\]

Fix a positive integer $m\ge1$. Since $\mf E_{3j}$ belongs to $\mc
F_{\tau_{3m}}$ for $j<m$, taking conditional expectation with respect
to $\mc F_{\tau_{3m}}$, by the strong Markov property we obtain that
\[
\mb P^{\widehat{\zeta}}_{\eta} \Biggl[ \bigcap
_{j=1}^{m} \mf E_{3j} \Biggr] = \mb
E^{\widehat{\zeta}}_{\eta} \Biggl[ \mb1 \Biggl\{ \bigcap
_{j=1}^{m-1} \mf E_{3j} \Biggr\}
\gamma_0 \bigl(\widehat{\zeta}(\tau_{3m})\bigr) \Biggr],
\]
where
%
\begin{eqnarray}
\label{48} \gamma_0 (\xi) &=& \mb1\{\xi\in\mc G\} \mb
P^{\widehat{\zeta}}_{\xi} [\mf E_{\mb Y(\xi)}] + \sum
_{\mc A = \mc L, \mc S} \mb1\{\xi\in\mc A\} \mb P^{\widehat{\zeta}}_{\xi}
[H_{\mc A \setminus\mc A_{\mb Y(\xi
)}} < H_{\mc G} ]
\nonumber
\\[-8pt]
\\[-8pt]
\nonumber
&&{} + \sum_{\mc A = \mc L, \mc S} \mb1\{\xi\in\mc A\} \mb
P^{\widehat{\zeta}}_{\xi} [H_{\mc G} < H_{\mc A \setminus\mc A_{\mb Y(\xi)}}, \mf
E_{\mb Y(\widehat{\zeta}(H_{\mc G}))} ].
\end{eqnarray}
By symmetry, the first probability is bounded by $\varrho$, defined
in \eqref{c03}, while by the strong Markov property, the third one is
bounded by
\begin{eqnarray*}
&& \varrho \sum_{\mc A = \mc L, \mc S} \mb1\{\xi\in\mc A\} \mb
P^{\widehat{\zeta}}_{\xi} [H_{\mc G} < H_{\mc A \setminus\mc A_{\mb Y(\xi)}} ]
\\
&&\qquad = \varrho \sum_{\mc A = \mc L, \mc S} \mb1\{\xi\in\mc A\} \bigl\{
1 - \mb P^{\widehat{\zeta}}_{\xi} [H_{\mc A \setminus\mc A_{\mb Y(\xi)}} < H_{\mc G} ]
\bigr\}.
\end{eqnarray*}
Summing this expression with the second one on the right-hand side of
\eqref{48}, we obtain by symmetry that
\[
\max_{\xi\in\Xi} \gamma_0 (\xi) \le \varrho + ( 1 -
\varrho ) \max_{\xi\in\mc L_0} \mb P^{\widehat{\zeta}}_{\xi}
[H_{\mc L \setminus\mc L_{0}} < H_{\mc G} ] = \gamma.
\]
Iterating this argument $(k-4)/3$ times, we get that
\[
\mb P^{\widehat{\zeta}}_{\eta} [ H_{\Gamma} = H_{\blacklozenge_k} ]
\le \gamma^{(k-4)/3},
\]
which proves the lemma.
\end{pf}

\begin{lemma}
\label{s14}
There exists a positive constant $c_0>0$ such that
\[
\max_{\eta\in\mc L} \mb P^{\widehat{\zeta}}_{\eta}
[H_{\mc L \setminus\mc L_{\mb Y(\eta)}} < H_{\mc G} ] \le 1 - \frac{c_0}n \cdot
\]
\end{lemma}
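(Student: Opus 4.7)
The plan is to show, for $\eta = \zeta^{\mf l,j}_{\mb x}\in\mc L$, that $\mb P^{\widehat \zeta}_\eta[H_{\mc G} < H_{\mc L \setminus \mc L_{\mb x}}] \ge c_0/n$, which is equivalent to the stated bound. I would exhibit a concrete three-step path $\eta \to \zeta^{\mf l,0}_{\mb x} \to \xi^* \to \eta^{i,i}_{\mb y}$, entirely contained in $\Xi \setminus (\mc L \setminus \mc L_{\mb x})$, with $\xi^* \in \Omega^2_{\mb x}$ and $\eta^{i,i}_{\mb y}\in \widehat \Omega^1\subset \mc G$, whose probability under $\widehat \zeta$ is bounded below by $c_0/n$.

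For the first leg, I would extend the argument of Lemma \ref{ls02} by symmetry over the four corners of the lying rectangle: the rates $\mb R(\zeta^{\mf l,k}_{\mb x},\zeta^{\mf l,k'}_{\mb x})$ between configurations whose extra particles sit on adjacent sides of the rectangle are bounded below by a positive constant, while the total holding rate at each $\zeta^{\mf l,k}_{\mb x}$ is $O(1)$. Hence from any starting $\zeta^{\mf l,j}_{\mb x}$, the process reaches the distinguished configuration $\zeta^{\mf l,0}_{\mb x}$, whose extra particle sits on a long side, in at most three jumps inside $\mc L_{\mb x}$ with probability $\ge c_1>0$ independent of $n$.

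For the second leg, from $\zeta^{\mf l,0}_{\mb x}$ I would use the explicit form of the measure $\bb M$ on the neighborhood $\mc N(\mc E^{\mf l,0}_{\mb x})$ to identify a target $\xi^* \in \Omega^2_{\mb x}$ (corresponding to the detached particle being absorbed so that the bottom row of the $(n+1)\times(n-1)$ rectangle collapses onto the $(n-1)\times(n-2)$ core rectangle of $\Omega^2$) with $\mb R(\zeta^{\mf l,0}_{\mb x},\xi^*)\ge c_2>0$, which yields a bounded-below one-step jump probability $c_3$. For the third leg, from $\xi^*$ I would combine Lemma \ref{s19}, which gives $\mb R(\eta^{i,i}_{\mb y},\Omega^2)=1/n$, with the symmetry \eqref{eq:2}: this produces $\mb R(\xi^*,\eta^{i,i}_{\mb y}) \ge c_4/n$ for an appropriate nearby $\eta^{i,i}_{\mb y}$, and, since the total holding rate at $\xi^*$ is $O(1)$ by the argument of Lemma \ref{s05}, the one-step jump probability from $\xi^*$ into $\widehat \Omega^1 \subset \mc G$ is at least $c_0/n$. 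The strong Markov property chains the three legs to give the lemma.

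The main obstacle is the second leg: pinning down $\xi^*$ so that $\mb R(\zeta^{\mf l,0}_{\mb x},\xi^*)$ is bounded below by a constant, and so that the \emph{same} $\xi^*$ has rate $\ge c_4/n$ back into a configuration in $\widehat \Omega^1 \cap \mc G_{\mb y}$. This requires unpacking the detach-and-reattach dynamics encoded in $\bb M$, tracking which $\Omega^2$ configurations are actually in the common image of $\mb R(\zeta^{\mf l,0}_{\mb x},\cdot)$ and $\mb R(\eta^{i,i}_{\mb y},\cdot)$, and verifying that $\xi^*$ sits at the position $\mb x$ so the path does not inadvertently cross into $\mc L \setminus \mc L_{\mb x}$.
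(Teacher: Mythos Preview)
Your explicit-path approach has a genuine gap at the second and third legs, and it is not just a matter of unpacking definitions. The difficulty is structural: the configurations in $\Omega^2$ that are reachable in one $\widehat\zeta$-step from a configuration in $\widehat\Omega^3$ sit at the $\Omega^2$--$\Omega^3$ boundary of the landscape (one side of the $(n-1)\times(n-2)$ core carries only two particles), whereas the single $\Omega^2$ configuration reachable from $\eta^{i,i}_{\mb y}\in\widehat\Omega^1$ is the specific one appearing in \eqref{43}, which sits at the $\Omega^1$--$\Omega^2$ boundary (three sides of the core are full). These two families are separated inside $\Omega^2$ by $O(n)$ sliding moves, so no common $\xi^*$ serves both legs. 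Moreover, the one-step rate from $\zeta^{\mf l,j}_{\mb x}$ into $\Omega^2$ is itself of order $1/n$ (it involves a gambler's-ruin probability of the same type as in \eqref{43}), not a constant as you assume; combined with the $1/n$ of the third leg, chaining yields $O(1/n^2)$, which is too small. An explicit path from $\widehat\Omega^3$ to $\widehat\Omega^1$ through $\Omega^2$ does exist (see \eqref{51}--\eqref{52}), but it has length $O(n)$ with each step of one-step probability $O(1/n)$, so naive chaining gives a useless $n^{-O(n)}$.

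The paper bypasses this by a capacity argument. One rewrites the hitting probability as $\alpha/(1+\alpha)$ with
\[
\alpha \;=\; \frac{\mb P^{\widehat\zeta}_\eta\big[H_{\mc L_{\mb 0}^c} < H^+_{\mc G\cup\{\eta\}}\big]}{\mb P^{\widehat\zeta}_\eta\big[H_{\mc G} < H^+_{\mc L_{\mb 0}^c\cup\{\eta\}}\big]}\,,
\]
introduces a reflected, reversible version $\check\zeta$ of $\widehat\zeta$ on the relevant region $\mc W$, and bounds the numerator above and the denominator below by capacities for $\check\zeta$. These capacities are then estimated two-sidedly (Lemma~\ref{s15}) via the Dirichlet and Thomson principles, yielding $c_0/n^2 \le |\mc W|\,e^{\beta}\,\Cap_{\check\zeta}(A,B)\le C_0/n$ for the relevant pairs; the ratio gives $\alpha\le C_0 n$ and hence $1/(1+\alpha)\ge c_0/n$. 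The case $j=1,3$ is reduced to $j=0,2$ via Lemma~\ref{ls02}. The point is that Thomson's principle handles the whole $O(n)$-step crossing of $\Omega^2$ as a single resistance, which an explicit-path probability bound cannot.
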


\begin{pf}
By definition of the set $\mc L_{\mb0}$,
\[
\max_{\eta\in\mc L_{\mb0}} \mb P^{\widehat{\zeta}}_{\eta}
[H_{\mc L \setminus\mc L_{\mb0}} < H_{\mc G} ] = \max_{0\le j\le3} \mb
P^{\widehat{\zeta}}_{\zeta^{\mf l,j}_{\mb0}} [H_{\mc L \setminus\mc L_{\mb0}} < H_{\mc G} ].
\]
Consider the case $j=2$ which, by symmetry, is equivalent to the case
$j=0$. The cases $j=1,3$ are examined after. Let $\eta=\zeta^{\mf
l,2}_{\mb0}$, $\mc L^c_{\mb0} = \mc L \setminus\mc L_{\mb
0}$. Intersect the event $\{H_{\mc L^c_{\mb0}} < H_{\mc G}\}$ with
the partition $\{H^+_{\eta} < H_{\mc L^c_{\mb0} \cup\mc G}\}$,
$\{H_{\mc L^c_{\mb0} \cup\mc G} < H^+_{\eta}\}$, and apply the
strong Markov property to obtain that
\[
\mb P^{\widehat{\zeta}}_{\eta} [H_{\mc L^c_{\mb0}} < H_{\mc G} ] =
\frac{\mb P^{\widehat{\zeta}}_{\eta}
 [H_{\mc L^c_{\mb0}} < H^+_{\mc G \cup\{\eta\}}  ]}{
\mb P^{\widehat{\zeta}}_{\eta}
 [H_{\mc L^c_{\mb0} \cup\mc G} < H^+_{\eta}  ]} \cdot
\]
Since $\mb P^{\widehat{\zeta}}_{\eta} [H_{\mc L^c_{\mb0} \cup\mc G} <
H^+_{\eta}] = \mb P^{\widehat{\zeta}}_{\eta} [H_{\mc L^c_{\mb0} } <
H^+_{\mc G\cup\{\eta\}} ] + \mb P^{\widehat{\zeta}}_{\eta} [H_{\mc G}
< H^+_{\mc L^c_{\mb0} \cup\{\eta\}} ]$, we obtain that
%
\begin{equation}
\label{49} \mb P^{\widehat{\zeta}}_{\eta} [H_{\mc L^c_{\mb0}} <
H_{\mc G} ] = \frac{\alpha}{1+\alpha} \qquad\mbox{where }  \alpha =
\frac{\mb P^{\widehat{\zeta}}_{\eta} [H_{\mc L^c_{\mb0}} <
H^+_{\mc G \cup\{\eta\}}]}{
\mb P^{\widehat{\zeta}}_{\eta} [H_{\mc G} < H^+_{\mc L^c_{\mb
0} \cup\{\eta\}} ]} \cdot
\end{equation}
It remains to bound $\alpha$.

For this purpose, we introduce a new process which corresponds to
reflect the process $\widehat{\zeta}(t)$ at some configurations. Let
$\mc N$ be the set of neighbors in $\Xi_{ \mathrm e}$ of $\mc L_{\mb0}$. As
illustrated in Figure~\ref{fig7},
\[
\mc N = \mc L_{\pm e_i} \cup\mc G_{\mb0} \cup\mc G_{e_1}
\cup\mc G_{- e_2} \cup\mc G_{e_1-e_2}.
\]
Let $\mc W$ be the set of configuration which can be reached by
$\widehat{\zeta}(t)$ before hitting $\mc N$:
\[
\mc W = \Bigl\{ \xi\in\Xi: \max_{\zeta\in\mc L_{\mb0}} \mb
P^{\widehat{\zeta}}_\zeta[H_\xi\le H_{\mc N}]>0\Bigr
\}.
\]
Note that only the four configuration $\eta^{i,1}_{\mb0}$, $0\le i\le
3$, of $\mc G_{\mb0}$ belong to $\mc N$. Denote by $\check\zeta(t)$
the process $\widehat{\zeta}(t)$ reflected at $\mc W$. The jump rates of
$\check\zeta(t)$, denoted by $R_{\check\zeta}(\xi,\zeta)$, are given
by
\[
R_{\check\zeta}(\xi,\zeta) = \cases{ e^{-\beta} \mb R(\xi,\zeta), &\quad$
\mbox{if } \xi, \zeta\in\mc W $, \vspace*{2pt}
\cr
0, & \quad $\mbox{otherwise.}$ }
\]

For $\xi,\zeta\in\mc W$, $R_{\check\zeta}(\xi,\zeta) = e^{-\beta}
\mb R(\xi,\zeta) = e^{-\beta} \mb R(\zeta, \xi) = e^{-\beta} R_{\check
\zeta}(\zeta, \xi)$. Therefore, the process $\check\zeta(t)$ is
reversible with respect to the uniform measure on $\mc W$, denoted by
$\mu_{\check\zeta}$. Moreover, since the rates coincide on $\mc W$,
we may couple $\widehat{\zeta}(t)$ and $\check\zeta(t)$ in such a way
that they evolve together until they reach $\mc N$. In particular,
\[
\alpha = \frac{\mb P^{\widehat{\zeta}}_{\eta} [H_{\mc L^c_{\mb0}} <
H^+_{\mc G \cup\{\eta\}}]}{
\mb P^{\widehat{\zeta}}_{\eta} [H_{\mc G} < H^+_{\mc L^c_{\mb
0} \cup\{\eta\}} ]} = \frac{\mb P^{\check\zeta}_{\eta} [H_{\mc L^c_{\mb0}} <
H^+_{\mc G \cup\{\eta\}}]}{
\mb P^{\check\zeta}_{\eta} [H_{\mc G} < H^+_{\mc L^c_{\mb
0} \cup\{\eta\}} ]},
\]
where $\mb P^{\check\zeta}_{\eta}$ stands for the distribution of
the process $\check\zeta(t)$ starting from $\eta$.

Recall that $\eta=\zeta^{\mf l,2}_{\mb0}$. By definition of the
capacity,
%
\begin{equation}
\label{27} \mu_{\check\zeta}(\eta) \lambda_{\check\zeta}(\eta) \mb
P^{\check\zeta}_{\eta} \bigl[H_{\mc L^c_{\mb0}} < H^+_{\mc G \cup\{\eta\}}
\bigr] \le \Cap_{\check
\zeta} \bigl(\mc L^c_{\mb0},\mc G
\cup\{\eta\}\bigr).
\end{equation}

Let $\mc G_+ = \mc G_{\mb x} \cup\mc G_{\mb x+e_1}$ and $\mc G_- =
\mc G_{\mb x-e_2} \cup\mc G_{\mb x+e_1-e_2}$. For the denominator, we
claim that
%
\begin{equation}
\label{28} 2 \mu_{\check\zeta}(\eta) \lambda_{\check\zeta}(\eta) \mb
P^{\check\zeta}_{\eta} \bigl[H_{\mc G} < H^+_{\mc L^c_{\mb
0} \cup\{\eta\}}
\bigr] \ge \Cap_{\check\zeta} \bigl(\mc G_+, \bigl\{\zeta^{\mf l, 0}_{e_2},
\eta\bigr\} \bigr).
\end{equation}
Indeed, from $\eta$ the process may only jump to a configuration in
$\mc L_0$ or to a configuration in $\Omega^2$. If it jumps to a
configuration in $\mc L_0$, since the process is reflected at~$\mc W$,
to reach $\mc G_+$ the process $\check\zeta(t)$ necessarily passes
by $\eta$. Therefore,
\[
\mb P^{\check\zeta}_{\eta} \bigl[ H_{\mc G} <
H^+_{\mc L^c_{\mb0}
\cup\{\eta\}} \bigr] \ge \mb P^{\check\zeta}_{\eta} \bigl[
H_{\mc G_+} < H^+_{\{\zeta^{\mf
l, 0}_{e_2}, \eta\}} \bigr].
\]
By symmetry, $\mb P^{\check\zeta}_{\eta} [ H_{\mc G_+} <
H^+_{\{\zeta^{\mf l, 0}_{e_2}, \eta\}}] = \mb P^{\check
\zeta}_{\zeta^{\mf l, 0}_{e_2}} [ H_{\mc G_+} < H^+_{\{\zeta^{\mf l,
0}_{e_2}, \eta\}} ]$. Since $\mu_{\check\zeta}$ is the uniform
measure and since $\lambda_{\check\zeta}(\eta) \ge e^{-\beta} \mb
R(\eta, \Omega^2) = e^{-\beta} \mb R(\zeta^{\mf l, 0}_{e_2}, \Omega^2)
= \lambda_{\check\zeta} (\zeta^{\mf l, 0}_{e_2})$, we obtain
\eqref{28}.

By \eqref{49}, \eqref{27}, \eqref{28} and Lemma \ref{s15} below,
%
\begin{equation}
\label{lc15} \mb P^{\widehat{\zeta}}_{\zeta^{\mf l,2}_{\mb0}} [H_{\mc L \setminus\mc L_{\mb0}} <
H_{\mc G} ] \le 1 - \frac{c_0}n \cdot
\end{equation}
It remains to consider the case $j=3$ which is equal to the case
$j=1$.

Let $a$ be the probability that the chain $\widehat{\zeta}(t)$ jumps
from $\zeta^{\mf l, 3}_{\mb0}$ to $\zeta^{\mf l, 2}_{\mb0}$. With
the notation introduced in Lemma \ref{ls02},
\[
a = \mb P^{\widehat{\zeta}}_{\zeta^{\mf l, 3}_{\mb0}} [ H_{\zeta^{\mf l, 2}_{\mb0}} =
H_{\mc V(\zeta^{\mf l, 3}_{\mb0})}].
\]
By the strong Markov property and by symmetry,
\[
\mb P^{\widehat{\zeta}}_{\zeta^{\mf l,3}_{\mb0}} [H_{\mc L \setminus\mc L_{\mb0}} < H_{\mc G} ]
\le (1-2a) + 2a \mb P^{\widehat{\zeta}}_{\zeta^{\mf l,2}_{\mb0}} [H_{\mc L \setminus\mc L_{\mb0}} <
H_{\mc G} ].
\]
By \eqref{lc15} and by Lemma \ref{ls02}, the previous expression is
less than or equal to $1-2ac_0/n = 1-c'_0/n$. This completes the proof
of the lemma.
\end{pf}

We are now in a position to obtain a bound for $\gamma$ introduced in
\eqref{c04}. By Lemma~\ref{s17}, $\varrho\le1/2$ for $n\ge
46$. Hence, by Lemma \ref{s14},
%
\begin{equation}
\label{c05} \gamma \le 1 - \frac{c_0}{2n} \le e^{-c_0 /2n}
\end{equation}
for some positive constant $c_0$.

The proof of Lemma \ref{s11} below provides bounds for the
capacity associated to the process $\check\zeta(t)$. Let
$U=\{\zeta^{\mf l, j}_{\mb0} \dvtx 0\le j\le3\} \cup\mc G_{\mb0} \cup
\mc G_{-e_1}\cup\mc G_{e_2} \cup\mc G_{e_2-e_1} \cup\{\zeta^{\mf l,
0}_{e_2}, \zeta^{\mf l, 1}_{-e_1}, \zeta^{\mf l, 2}_{-e_2}, \zeta^{\mf l, 3}_{e_1}\}$.

\begin{lemma}
\label{s15}
There exist constants $0<c_0<C_0<\infty$, independent of
$\beta$, such that for any disjoint subsets $A$, $B$ of $U$,
\[
\frac{c_0}{n^2} \frac{e^{-\beta}}{
|\mc W|} \le \Cap_{\check\zeta} (A,B) \le
\frac{C_0}{n} \frac{e^{- \beta}} {|\mc W|} \cdot
\]
\end{lemma}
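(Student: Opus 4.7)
Since $\check\zeta(t)$ is reversible with respect to the uniform measure $\mu_{\check\zeta}$ on $\mc W$, with conductances $c(\xi,\zeta) = e^{-\beta}\mb R(\xi,\zeta)/|\mc W|$, the two variational characterizations of capacity apply, and I will use the Dirichlet principle for the upper bound and Thomson's principle for the lower bound.

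For the upper bound, by the monotonicity $\Cap_{\check\zeta}(A,B) \le \Cap_{\check\zeta}(U\setminus B, B)$ it suffices to treat $A = U \setminus B$, for which I would take the test function $f = \mb 1_{U \setminus B}$. The resulting Dirichlet form equals the total conductance of edges from $U \setminus B$ to $\mc W \setminus (U \setminus B)$, which is $e^{-\beta}/|\mc W|$ times the total $\mb R$-rate across the cut. The factor $1/n$ then arises because every edge of $\check\zeta$ leaving a configuration of $U$ toward $\mc W \setminus U$ involves either the sliding of a full side of a square into $\Omega^2$, for which Lemma \ref{s19} gives $\mb R(\eta^{i,j}_{\mb x}, \Omega^2) = O(1/n)$, or the analogous transition from some $\zeta^{\mf l, j}_{\mb x}$ into $\Omega^4$, for which Lemma \ref{ls02} yields the same $O(1/n)$ bound. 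Since $|U|$ is bounded by a constant independent of $\beta$, summing over the cut gives $D_{\check\zeta}(f) \le C_0 e^{-\beta}/(n|\mc W|)$.

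For the lower bound, I would fix $a \in A$ and $b \in B$ and construct a unit flow supported on a single path from $a$ to $b$ in $\mc W$, then apply Thomson's principle $\Cap_{\check\zeta}(A,B) \ge 1/\sum_{\text{path}} c(\xi,\zeta)^{-1}$. Such a path is obtained by concatenating the sliding moves of Section \ref{sec3} and Figure \ref{fig7}: first exit from $a$ into the adjacent $\Omega^2$- or $\Omega^4$-type configuration, then slide particles one at a time along the sides of the intermediate rectangle, and finally reenter $b$. Depending on the pair $(a,b)$ there may be one or two bottleneck edges of $\mb R$-rate of order $1/n$ (each contributing resistance $O(n|\mc W| e^{\beta})$) and up to $O(n)$ intermediate edges of $\mb R$-rate bounded below by a positive constant, coming from Lemma \ref{s19} and the sliding rates used in Lemmas \ref{s17}, \ref{ls02} (each contributing $O(|\mc W|e^\beta)$, summing to $O(n|\mc W|e^\beta)$). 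The total resistance of the flow is thus at most $C_0 n^2 |\mc W| e^{\beta}$, whence $\Cap_{\check\zeta}(A,B) \ge c_0 e^{-\beta}/(n^2 |\mc W|)$.

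The main obstacle is the case analysis required to construct a valid connecting path for every admissible pair $A, B \subset U$, since $U$ contains four distinct squares $\mc G_{\mb x}$, the lying rectangles $\mc L_{\mb 0}$, and the four auxiliary configurations $\zeta^{\mf l, j}_{\mb y}$, and each combination of initial and terminal types (square-to-square, square-to-rectangle, rectangle-to-rectangle within $\mc L_{\mb 0}$, or $\mc L_{\mb 0}$-to-neighbor-$\mc L$) requires a slightly different path through the $\Omega^2/\Omega^4$ intermediate region. Verifying that each edge along the chosen path realises its claimed rate lower bound, and that the cut used for the Dirichlet argument contains no unexpected high-rate edges, relies on Lemmas \ref{s19} and \ref{ls02} together with the neighbourhood structure illustrated in Figure \ref{fig7}.
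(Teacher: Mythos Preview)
Your overall strategy---Dirichlet principle with an indicator test function for the upper bound, Thomson's principle with a unit flow along a sliding path for the lower bound---is exactly the route the paper indicates (the paper does not prove Lemma \ref{s15} separately but points to the proof of Lemma \ref{s11}, which is precisely this).

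There are, however, two genuine slips in the execution. First, in the upper bound you compute the conductance of the cut $(U\setminus B,\,\mc W\setminus(U\setminus B))$ but then account only for edges from $U$ to $\mc W\setminus U$. You never check that there are no direct $O(1)$ edges from $U\setminus B$ to $B$ inside $U$. This matters: by Lemma \ref{ls02}, $\mb R(\zeta^{\mf l,3}_{\mb 0},\zeta^{\mf l,2}_{\mb 0})\ge c_0$, so configurations within $\mc L_{\mb 0}$ are linked by rates of order one. If your $B$ picks out one element of $\mc L_{\mb 0}$ and $U\setminus B$ contains another, the indicator $\mb 1\{U\setminus B\}$ gives a Dirichlet form of order $e^{-\beta}/|\mc W|$, not $e^{-\beta}/(n|\mc W|)$. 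The lemma is stated under the convention that subsets of $U$ are unions of ``the sets forming $U$'', and the upper bound needs precisely that no two such atoms sitting on opposite sides of the cut are joined by an $O(1)$ edge; you should verify this rather than silently drop the $B$-side of the cut.

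Second, your breakdown of the path resistance for the lower bound is inconsistent. You claim the $O(n)$ intermediate edges through $\Omega^2/\Omega^4$ have $\mb R$-rate bounded below by a positive constant, which would give total resistance $O(n|\mc W|e^{\beta})$ and hence capacity $\ge c_0 e^{-\beta}/(n|\mc W|)$---too strong. In fact those sliding transitions have $\mb R(\xi_i,\xi_{i+1})\ge 1/n$, not $\ge c_0$; see \eqref{52} and the remark following it. The correct count is $O(n)$ edges each of resistance $O(n|\mc W|e^{\beta})$, totalling $O(n^2|\mc W|e^{\beta})$, which is the bound you eventually write down. So your final inequality is right, but the justification you give for it does not add up.
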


In this lemma, by a subset $A$ of $U$, we understand the union of the
sets forming~$U$. This means that either all configurations of $\mc
G_{\mb x}$ belongs to $A$ or none.

\section{Proof of Theorem \texorpdfstring{\protect\ref{s18}}{1.2}}
\label{sec7}


Recall that we denote by $\Xi_\star$ be the set of configurations which
can be reached from $\eta^{\mb0}$ without crossing $\Delta_2$, the
set of configurations with energy greater than $\bb H_{\mathrm{min}} +2$,
defined in \eqref{04}. We first claim that
%
\begin{equation}
\label{36} \lim_{\beta\to\infty} \mb E_{\eta^{\mb0}} \biggl[ \int
_0^t \mb1\bigl\{\eta\bigl(s \ell^2
\theta_\beta\bigr) \notin\Xi_\star\bigr\} \,ds \biggr] = 0.
\end{equation}

Recall the definition of the set $\mc N(\eta^{\mb0})$ introduced in the
proof of Lemma \ref{s13} and define $\mc N(\eta^{\mb x})$, $\mb
x\in\bb T_L$, as the set of configurations in $\mc N(\eta^{\mb0})$
translated by~$\mb x$. Denote by $N_{\mb x}(t)$ the number of jumps
from the configuration $\eta^{\mb x}$ to the set of configurations
$\Delta_2$ in the time interval $[0,t]$. This corresponds to the number
of jumps from $\eta^{\mb x}$ to $\Omega_{L,K} \setminus\mc N(\eta^{\mb
x})$ in the time interval $[0,t]$. It is clear that $N_{\mb x}(t)$,
$\mb x\in\bb T_L$, are independent Poisson processes of intensity
$4(n-2) e^{-3\beta} \mb1\{\eta= \eta^{\mb x}\}$. In particular, if
$N_0(t) = \sum_{\mb x \in\bb T_L} N_{\mb x}(t) $,
\[
\mb P_{\eta^{\mb0}} \bigl[ N_0(t) \ge1 \bigr] \le \mb
E_{\eta^{\mb0}} \bigl[ N_0(t) \bigr] = 4(n-2) e^{-3\beta} \mb
E_{\eta^{\mb0}} \biggl[ \int_0^{t} \mb1\bigl
\{\eta(s) \in\Gamma\bigr\} \,ds \biggr].
\]
By symmetry,
%
\begin{eqnarray}
\label{37} && \mb E_{\eta^{\mb0}} \biggl[ \int_0^{\ell^2 \theta_\beta}
\mb1\bigl\{\eta (s) \in \Gamma\bigr\} \,ds \biggr]\nonumber\\
&&\qquad = \frac{1}{|\bb T_L|} \sum
_{\mb y\in\bb T_L} \mb E_{\eta^{\mb y}} \biggl[ \int
_0^{\ell^2 \theta_\beta} \mb1\bigl\{\eta(s) \in\Gamma\bigr\} \,ds
\biggr]
\\
&&\qquad = \frac{1}{|\bb T_L| \mu_K(\eta^{\mb w})} \sum_{\mb y\in\bb T_L}
\mu_K\bigl(\eta^{\mb y}\bigr) \mb E_{\eta^{\mb y}} \biggl[
\int_0^{\ell^2 \theta_\beta} \mb1\bigl\{\eta(s) \in\Gamma\bigr\}
\,ds \biggr].\nonumber
\end{eqnarray}
Clearly, the sum is bounded above by
\[
\mb E_{\mu_K} \biggl[ \int_0^{\ell^2 \theta_\beta} \mb1
\bigl\{\eta(s) \in\Gamma\bigr\} \,ds \biggr] = \ell^2
\theta_\beta \mu_K(\Gamma),
\]
while the numerator is equal to $\mu_K(\Gamma)$ so that
\[
\mb P_{\eta^{\mb0}} \bigl[ N_0\bigl(\ell^2
\theta_\beta\bigr) \ge1 \bigr] \le C_0 n \ell^2
\theta_\beta e^{-3\beta}.
\]
By assumption \eqref{40}, this expression vanishes as
$\beta\uparrow\infty$. Hence, with a probability converging to $1$,
the process $\eta(t)$ does not hit $\Delta_2$ in the time interval
$[0,\ell^2 \theta_\beta]$ from a configuration in $\Gamma$. Moreover,
on the event $N_0(\ell^2 \theta_\beta) = 0$, in the time interval
$[0,\ell^2 \theta_\beta]$ the process $\eta(t)$ may only jump from a
configuration in $\Gamma$ to a configuration in $\bigcup_{\mb x\in\bb
T_L} \mc N(\eta^{\mb x})$.

Denote by $\mc P_1(\mb x)$, $\mc P$ for plateau, the set of
configurations in $\bb H_2$ which can be reached from configurations
in $\mc N(\eta^{\mb x})$ by rate one jumps. These are the configurations
in $\bb H_2$ which appeared in the proof of Lemma \ref{s13} and which
have been denoted by $\Sigma_{0,1}$ in Figure~\ref{fig1a}.

For each configuration $\eta\in\mc P_1(\mb x)$, denote by $N_1 (\eta,
t)$ the number of jumps from $\eta$ to a configuration in $\Delta_2$
in the time interval $[0,t]$. It is clear that $N_1(\eta, t)$,
$\eta\in\mc P_1(\mb x)$, are orthogonal Poisson processes of
intensity bounded by $C_0 e^{-\beta} \mb1\{\eta(t) = \eta\}$.
Hence, $N_1(t) = \sum_{\eta\in\mc P_1} N_1(\eta, t)$, $\mc P_1 =
\bigcup_{\mb x\in\bb T_L} \mc P_1(\mb x)$, is a Poisson process of
intensity bounded by $C_0 e^{-\beta} \mb1\{\eta(t) \in\mc P_1\}$
and
\[
\mb P_{\eta^{\mb0}} \bigl[ N_1(t) \ge1 \bigr] \le \mb
E_{\eta^{\mb0}} \bigl[ N_1 (t) \bigr] \le C_0
e^{-\beta} \mb E_{\eta^{\mb0}} \biggl[ \int_0^{t}
\mb1\bigl\{\eta(s) \in\mc P_1\bigr\} \,ds \biggr].
\]

Repeating the arguments presented in \eqref{37}, we show that
\[
\mb P_{\eta^{\mb0}} \bigl[ N_1\bigl(\ell^2
\theta_\beta\bigr) \ge1 \bigr] \le \frac{C_0 \ell^2 \theta_\beta e^{-\beta} \mu_K(\mc P_1)}{
\mu_K(\Gamma)} \le
C_0 (\ell/L)^2 \theta_\beta e^{-3\beta} |
\mc P_1|.
\]
Since $|\mc P_1|$ is bounded by $C_0 L^2 (n^2 + L^2)$, since $n\le
L$, the previous expression is less than or equal to $C_0 \ell^2
L^2 \theta_\beta e^{-3\beta}$, which vanishes as
$\beta\uparrow\infty$ in view of \eqref{40}. Therefore, with a
probability converging to $1$, the process $\eta(t)$ does not hit
$\Delta_2$ in the time interval $[0, \ell^2 \theta_\beta]$ from a
configuration in $\mc P_1$. Moreover, on the event $N_1(\ell^2
\theta_\beta)=0$, in the time interval $[0,\ell^2 \theta_\beta]$ the
process $\eta(t)$ may only leave the set $\mc P_1$ to a configuration
in $\Gamma$ or to a configuration in $\mc E^{i,j}_{\mb x}$, $\mb
x\in\bb T_L$, $0\le i,j\le3$.

At this point, we repeat the reasoning developed above for the
configurations in $\Gamma$ to the configurations in $\mc E^{i,j}_{\mb
x}$. Proceeding in this manner, we complete the proof of
\eqref{36}. The main contribution, which explains the need of
assumption \eqref{40}, comes from the subsets of configurations which
are crossed in a transition from a configuration in $\Omega^2$
(resp., $\Omega^4$) to another configuration in $\Omega^2$
(resp., $\Omega^4$). There are $C_0 L^2 n^8$ such configurations.
The details are left to the reader. 

We conclude this section with a similar estimate needed in the proof
of Theorem \ref{sc01}. Under the assumptions of Theorem \ref{s18} and
the hypothesis that $n^7 e^{-\beta} + L^2 e^{-2\beta}\to0$, we claim
that
%
\begin{equation}
\label{c14} \lim_{\beta\to\infty} \mb E_{\eta^{\mb0}} \biggl[ \int
_0^t \mb1\bigl\{\eta\bigl(s \ell^2
\theta_\beta\bigr) \notin \Gamma\bigr\} \,ds \biggr] = 0.
\end{equation}
In view of Theorem \ref{s18}, we may replace $\Gamma^c$ by $\Xi_\star
\cap\bb H_{12}$ in the previous formula. Repeating the arguments
which led to \eqref{37}, we obtain that the previous expectation is
bounded by
\[
\frac{t \mu_K(\Xi_\star\cap\bb H_{12})}{
\mu_K(\Gamma)} = \frac{t e^{-2\beta} |\Xi_\star\cap\bb H_2|}{L^2} + \frac{t e^{-\beta} |\Xi_\star\cap\bb H_1|}{L^2} \cdot
\]
The main components of $\Xi_\star\cap\bb H_1$ are the subsets
$\Omega^2$ and $\Omega^4$ whose cardinality are bounded by $C_0
L^2 n^7$. To estimate the first term in the previous formula, one has
to go through the proofs of Section~\ref{sec6} and recollect all
configurations of $\Xi_\star\cap\bb H_2$. An inspection shows that
$|\Xi_\star\cap\bb H_2| \le C_0 L^2 (L^2+n^8)$. This completes
the proof of \eqref{c14}.

\section{Proof of Theorem \texorpdfstring{\protect\ref{sc01}}{1.3}}
\label{sec08}

Recall that we denote by $\zeta(t)$ the trace of $\eta(t)$ on $\Xi$
and that the center of mass, $\mt M(\eta)$ of a configuration $\eta$
in $\Xi$ is well defined.

We first claim that the process $\hat{\mt M}_\beta(t) = \mt M(\zeta(t
\ell^2 \theta_\beta))/\ell$ converges to a Brownian motion in the
Skorohod topology. The proof of this assertion is divided in two
steps. We first prove the tightness of the sequence and then we
characterize the limit points.

Assume that $\zeta(0) = \eta^{\mb0}$. Let $S_1=0$ and let $T_1$ be
the time of the first jump of $\zeta(t)$. Define inductively $S_{j+1}
= H_\Gamma\circ\vartheta_{T_j} + T_j$, $T_{j+1} = T_1 \circ
\vartheta_{S_j} + S_j$, $j\ge1$. Thus, $[S_j,T_j)$ represent the
successive sojourn times at $\Gamma$. Fix $t>0$ and let $\mf n =
\min\{k\ge1\dvtx T_k \ge t \theta_\beta\ell^2\}$.

Recall that $\xi(t)$ is the trace of $\zeta(t)$ on $\Gamma$, and fix
$\delta>0$, $\varepsilon>0$. By the observation of the previous
paragraph, if
\begin{eqnarray*}
\mathop{\sup_{0\le s\le t \theta_\beta\ell^2}}_{ 0\le r\le\delta
\theta_\beta\ell^2} \bigl\Vert\mt M\bigl(
\xi(s+r)\bigr) - {\mt M}\bigl(\xi(r)\bigr) \bigr\Vert &\le& \varepsilon \ell \quad\mbox{and}
\\
\sup_{T_j \le s \le S_{j+1}} \bigl\Vert\mt M\bigl(\zeta(s)\bigr) - {\mt M}\bigl(
\zeta (T_j)\bigr) \bigr\Vert &\le& \varepsilon \ell\qquad \mbox{for all $j\le\mf
n$},
\end{eqnarray*}
then, the inequality written in the first line of the above
formula holds with $\xi$, $\varepsilon$ replaces by $\zeta$, $3\varepsilon$,
respectively. In particular, since we have shown in the proof of
Theorem \ref{s10} that the chain $\tilde{\mt M}_\beta(t) = \mt
M(\xi(t \ell^2 \theta_\beta))/\ell$ is tight with respect to the
uniform modulus of continuity, to prove the tightness of $\hat{\mt
M}_\beta(t)$ it remains to show that
\[
\lim_{\delta\to0} \limsup_{\beta\to\infty} \mb
P^\zeta_{\eta^{\mb0}} \Biggl[ \bigcup_{j=1}^{\mf n}
\sup_{T_j \le s \le S_{j+1}}\bigl \Vert\mt M\bigl(\zeta(s)\bigr) - {\mt M}\bigl(
\zeta(T_j)\bigr) \bigr\Vert \ge \varepsilon \ell \Biggr] = 0.
\]

Let $\mf e_j = T_j - S_j$, $j\ge1$, so that
$\{\lambda_\zeta(\eta^{\mb0}) \mf e_j \dvtx j\ge1\}$ is a sequence of
mean $1$ i.i.d. exponential random variables. Clearly, $\sum_{1\le
i\le j} \mf e_i \le t \theta_\beta\ell^2$ if $\mf n > j$. Since by
equation (6.9) in \cite{bl2}, $\lambda_\zeta(\eta^{\mb0}) \le
\lambda(\eta^{\mb0}) = 8 e^{-2\beta} (1+ O(n) e^{-\beta}) \le9
e^{-2\beta}$,
\begin{eqnarray*}
\mb P^\zeta_{\eta^{\mb0}} [\mf n > j] &\le& \mb P^\zeta_{\eta^{\mb0}}
\Biggl[\sum_{i=1}^j \lambda_\zeta
\bigl(\eta^{\mb
0}\bigr) \mf e_i \le9 e^{-2\beta} t
\theta_\beta\ell^2 \Biggr] \\
&\le& \frac{9 e^{-2\beta} t \theta_\beta\ell^2}{j}
\cdot
\end{eqnarray*}
Therefore, to prove tightness of the chain $\hat{\mt M}_\beta(t)$ with
respect to the uniform modulus of continuity it remains to show that
\[
\lim_{\delta\to0} \limsup_{\beta\to\infty} e^{-2\beta}
\theta_\beta\ell^2 \mb P^\zeta _{\eta^{\mb0}}
\Bigl[ \sup_{0 \le s \le H_\Gamma} \bigl\Vert\mt M\bigl(\zeta(s)\bigr) \bigr\Vert \ge
\varepsilon \ell \Bigr] = 0.
\]
This follows from the proof of Theorem \ref{s10}.

Let $\tilde{\mt M}_\beta(t) = \mt M(\xi(t \ell^2
\theta_\beta))/\ell$, which converges to a Brownian motion by Theorem~\ref{s10}. We claim that for every $t>0$, $\varepsilon>0$,
%
\begin{equation}
\label{n01} \lim_{\beta\to\infty} \mb P^\zeta_{\eta^{\mb0}}
\Bigl[ \sup_{0\le s\le t} \bigl\Vert\hat{\mt M}_\beta(s) -
\tilde{\mt M}_\beta(s)\bigr \Vert >\varepsilon \Bigr] = 0.
\end{equation}
By definition of $\hat{\mt M}_\beta(t)$, $\tilde{\mt M}_\beta(t)$, it
is enough to show that
%
\begin{equation}
\label{n03} \lim_{\beta\to\infty} \mb P^\zeta_{\eta^{\mb0}}
\Bigl[ \sup_{0\le s\le t \ell^2 \theta_\beta}\bigl
 \Vert\mt M \bigl(\zeta(s)\bigr) - \mt M
\bigl(\xi(s)\bigr) \bigr\Vert >\varepsilon\ell \Bigr] = 0.
\end{equation}

Recall that $\xi(s)$ is the trace of $\zeta(s)$ on $\Gamma$. Hence,
if we define the additive functional $T(s)$ by
\[
T(s) = \int_0^s \mb1\bigl\{\zeta(r) \in
\Gamma\bigr\} \,dr, \qquad s\ge0,
\]
and if $S(s)$ is the generalized inverse of $T$, $S(s) = \sup\{r\ge0
| T(r) \le s\}$, $\xi(s) = \zeta(S(s))$. We may therefore replace
$\mt M (\zeta(s)) - \mt M (\xi(s))$ by $\mt M (\zeta(s)) - \mt M
(\zeta(S(s)))$ in the previous formula.

We claim that for every $t>0$, $\delta>0$,
%
\begin{equation}
\label{n02} \lim_{\beta\to\infty} \mb P^\zeta_{\eta^{\mb0}}
\bigl[ S\bigl(t \theta_\beta \ell^2\bigr) - t
\ell^2 \theta_\beta \ge\delta \ell^2
\theta_\beta \bigr] = 0.
\end{equation}
By definition of $S$, for every $\delta'>0$ $t'>0$, $S(t') <t' +
\delta'$ if $T(t' + \delta')>t'$. Hence, taking $\delta'= \delta
\ell^2 \theta_\beta$ and $t' = t \ell^2 \theta_\beta$, since
$t'-T(t') = \int_{[0,t']} \mb1\{\zeta(s) \notin\Gamma\} \,ds$, we
conclude that the previous probability is less than or equal to
\[
\mb P^\zeta_{\eta^{\mb0}} \biggl[ \int_{0}^{(t+\delta) \theta_\beta
\ell^2}
\mb1\bigl\{\zeta(s) \notin\Gamma\bigr\} \,ds \ge\delta\ell^2
\theta_\beta \biggr].
\]
By Chebyshev's inequality and by translation invariance and by the
arguments used in \eqref{37}, this probability is bounded by
\begin{eqnarray*}
&& \frac{1}{\delta\ell^2 \theta_\beta} \mb E^\zeta_{\eta^{\mb0}} \biggl[ \int
_{0}^{(t+\delta) \theta_\beta
\ell^2} \mb1\bigl\{\zeta(s) \notin\Gamma\bigr\}
\,ds \biggr]
\\
& &\qquad\le \frac{1}{\delta\ell^2 \theta_\beta\mu_\zeta(\Gamma)} \mb E^\zeta_{\mu
_\zeta} \biggl[ \int
_{0}^{(t+\delta) \theta_\beta
\ell^2} \mb1\bigl\{\zeta(s) \notin\Gamma\bigr\}
\,ds \biggr] = \frac{(t+\delta) \mu_\zeta(\Gamma^c)}{\delta\mu_\zeta(\Gamma)} \cdot
\end{eqnarray*}
Since $\mu_\zeta(\Gamma^c)/ \mu_\zeta(\Gamma)$ is bounded by
$C_0n^7e^{-\beta}$, the previous expression vanishes as
$\beta\to\infty$, proving \eqref{n02}.

Recall that we replaced $\mt M (\xi(s))$ by $\mt M (\zeta(S(s)))$ in
\eqref{n03}. Since $S(r)-r$ is a nonnegative increasing function, in
view of \eqref{n02}, to prove \eqref{n01} it is enough to show that
for every $\varepsilon>0$,
\[
\lim_{\delta\to0} \limsup_{\beta\to\infty} \mb
P^\zeta_{\eta^{\mb0}} \Bigl[ \mathop{\sup_{0\le s\le t \ell^2 \theta_\beta}}_{
0\le r \le\delta\ell^2 \theta_\beta}
\bigl\Vert\mt M \bigl(\zeta(s)\bigr) - \mt M \bigl(\zeta(s+r)\bigr) \bigr\Vert >\varepsilon
\ell \Bigr] = 0.
\]
This follows from the tightness of the process $\mt M (\zeta(s))$.

To complete the proof of the theorem, we need to replace $\mt M
(\zeta(s))$ by $\mt M (\eta(s))$. This is simpler and follows the same
strategy of the first part of the proof. Denote by $\bb A$ the event
$\{\eta(s)\notin\Xi_\star\mbox{ for some } 0\le s\le t \ell^2
\theta_\beta\}$. By Theorem \ref{s18}, this event has a vanishing
asymptotic probability. In view of our convention for the center of
mass of configurations in $\Sigma_{0,1} \cup\Sigma_{2,3}$, on $\bb A$
the center of mass $\mt M(\eta(s))$ does not change appreciably during
excursions in $\bb H_2$: on $\bb A$ for all $0\le s\le t \ell^2
\theta_\beta$,
\[
\sup_{S_s \le s\le T_s} \max \bigl\{ \bigl\Vert\mt M\bigl(\eta(s)\bigr) - \mt M
\bigl(\eta(S_s)\bigr) \bigr\Vert,\bigl \Vert\mt M\bigl(\eta(s)\bigr) - \mt M
\bigl(\eta(T_s)\bigr) \bigr\Vert \bigr\} \le \frac{C_0}{n\ell},
\]
where $S_s$ (resp., $T_s$) is the last (resp., first) time before
(resp., after) $s$ in which $\eta(s)$ belongs to $\bb H_{01}$. Therefore,
\[
\mathop{\sup_{0\le s\le t \ell^2 \theta_\beta}}_{
0\le r \le\delta\ell^2 \theta_\beta} \bigl\Vert\mt M \bigl(\eta(s)
\bigr) - \mt M \bigl(\eta(s+r)\bigr) \bigr\Vert \le \frac{C_0}{n\ell} + \mathop{\sup
_{0\le s\le t \ell^2 \theta_\beta}}_{
0\le r \le\delta\ell^2 \theta_\beta} \bigl\Vert\mt M \bigl(\zeta(s)\bigr) - \mt M
\bigl(\zeta(s+r)\bigr) \bigr\Vert.
\]
This proves that the sequence of Markovs chains $\mt M^\beta(t)$ is tight.

To characterize the limit points, we compare $\mt M^\beta(t)$ to
$\hat{\mt M}_\beta(t)$ and we use the fact that $\zeta(t)$ is the
trace of
$\eta(t)$ on $\Xi$. We need to prove \eqref{n01} with the obvious
modifications. The main point in the proof of \eqref{n01} is
assertion \eqref{n02} whose proof reduces to the estimate
\[
\lim_{\beta\to\infty} \mb E_{\eta^{\mb0}} \biggl[ \int
_{0}^{t+\delta} \mb1\bigl\{\eta\bigl(s
\theta_\beta\ell^2\bigr) \notin\Xi\bigr\} \,ds \biggr] = 0.
\]
This estimate has been derived in \eqref{c14}, which completes the
proof of the theorem.

\section{The time-scale \texorpdfstring{$\theta_\beta$}{$theta_beta$}}
\label{sec8}

We prove in this section the bounds \eqref{39} on $\theta_\beta$.
The proof relies on the next lemma.

\begin{lemma}
\label{s11}
Assume that \eqref{c09} holds. Then there exist constants
$0<c_0<C_0<\infty$, independent of $\beta$, such that
\[
\frac{c_0}{n^2} \frac{e^{-2 \beta} \mu_\beta(\eta^{\mb 0})}{
\mu_\beta(\Xi)} \le \Cap_\zeta\bigl(
\eta^{\mb 0}, \Gamma\setminus \bigl\{\eta^{\mb 0}\bigr\}\bigr) \le
\frac{C_0}{n} \frac{e^{-2 \beta} \mu_\beta(\eta^{\mb 0})}{
\mu_\beta(\Xi)} \cdot
\]
\end{lemma}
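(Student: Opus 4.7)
\textbf{Proof plan for Lemma \ref{s11}.} The plan is to exploit the standard identity
$$\Cap_\zeta(\eta^{\mb 0},\Gamma\setminus\{\eta^{\mb 0}\}) \;=\; \mu_\zeta(\eta^{\mb 0})\,\lambda_\zeta(\eta^{\mb 0})\,\mb P^\zeta_{\eta^{\mb 0}}\big[\,H_{\Gamma\setminus\{\eta^{\mb 0}\}}<H^+_{\eta^{\mb 0}}\,\big]$$
for the upper bound, and Thomson's principle (i.e.\ an explicit unit flow) for the lower bound, recalling throughout that $\mu_\zeta(\eta^{\mb 0})=\mu_\beta(\eta^{\mb 0})/\mu_\beta(\Xi)$.

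\textbf{Upper bound.} By Proposition \ref{s16}, $\lambda_\zeta(\eta^{\mb 0}) = e^{-2\beta}\sum_{\xi}\mb R(\eta^{\mb 0},\xi) + O(e^{-2\beta}\kappa_1) \le C_0 e^{-2\beta}$. For the probability, I decompose according to the first jump: with probability $1-O(\kappa_1)$ the chain enters $\mc V(\eta^{\mb 0})$ by \eqref{31}, and from any $\eta\in\mc V(\eta^{\mb 0})$ the event $\{H_{\Gamma\setminus\{\eta^{\mb 0}\}}<H_{\eta^{\mb 0}}\}$ is contained in $\{H_{\mc G_{\mb 0}^c}<H_{\eta^{\mb 0}}\}$. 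Using the coupling $\mb P^{\zeta,\widehat\zeta}_\eta$ provided by Lemma \ref{bs01} and \eqref{b01}:
$$\mb P^\zeta_\eta\big[H_{\mc G_{\mb 0}^c}<H_{\eta^{\mb 0}}\big] \;\le\; \mb P^{\widehat\zeta}_\eta\big[H_{\mc G_{\mb 0}^c}<H_{\eta^{\mb 0}}\big] \;+\; \mb P\big[T_{\mathrm{cp}}\le H^\zeta_{\mc G_{\mb 0}^c\cup\{\eta^{\mb 0}\}}\big]\;.$$
The first term is at most $23/n$ by Lemma \ref{s17}. For the second, Lemma \ref{s19} gives $\lambda_{\widehat\zeta}(\eta)\ge (1/4)e^{-\beta}$ for $\eta\in\mc V(\eta^{\mb 0})$, and the reduced chain on $\mc V(\eta^{\mb 0})$ has $O(1)$ states with escape probabilities bounded below, so $\mb E^\zeta_\eta[H_{\mc G_{\mb 0}^c\cup\{\eta^{\mb 0}\}}]\le C_0 e^\beta$. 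Balancing with $T_0 = e^\beta/\sqrt{\kappa_2}$, Chebychev gives $\mb P[T_{\mathrm{cp}}\le T_0]\le T_0\kappa_2 e^{-\beta}=\sqrt{\kappa_2}$ and $\mb P[H\ge T_0]\le C_0\sqrt{\kappa_2}$, so the correction is $O(\sqrt{\kappa_2})$. Since the standing assumption $L\ge 2n$ combined with \eqref{c09} gives $n^2\kappa_2 = n^6 e^{-\beta}+n^3 L e^{-\beta/2}\le (1/4)\,n^4L^2 e^{-\beta}\,(1+n/L)\to 0$, we conclude $\sqrt{\kappa_2}=o(1/n)$, yielding $\mb P^\zeta_{\eta^{\mb 0}}[H_{\Gamma\setminus\{\eta^{\mb 0}\}}<H^+_{\eta^{\mb 0}}]\le C_0/n$ and therefore the upper bound.

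\textbf{Lower bound.} I construct a unit flow from $\eta^{\mb 0}$ to $\eta^{e_1}$ supported on the explicit path $\xi_0=\eta^{\mb 0},\xi_1,\dots,\xi_M=\eta^{e_1}$ of length $M=3n-2$ described just before \eqref{51}. Since the intermediate configurations $\xi_i$ ($1\le i\le M-1$) lie in $\bb H_1$, one has $\mu_\zeta(\xi_i)=e^{-\beta}\mu_\zeta(\eta^{\mb 0})$. Proposition \ref{s16} together with \eqref{50} and \eqref{51} yields $R_\zeta(\xi_0,\xi_1),R_\zeta(\xi_{M-1},\xi_M)\ge c_0 e^{-2\beta}$, so the boundary edges have conductance $c_\zeta\ge c_0 e^{-2\beta}\mu_\zeta(\eta^{\mb 0})$; Proposition \ref{s08} combined with \eqref{42} and \eqref{52} yields $R_\zeta(\xi_i,\xi_{i+1})\ge c_0 e^{-\beta}/n$ on middle edges, giving $c_\zeta(\xi_i,\xi_{i+1})\ge c_0 e^{-2\beta}\mu_\zeta(\eta^{\mb 0})/n$. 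Thomson's principle then gives
$$\frac{1}{\Cap_\zeta(\eta^{\mb 0},\eta^{e_1})} \;\le\; \sum_{i=0}^{M-1}\frac{1}{c_\zeta(\xi_i,\xi_{i+1})} \;\le\; \frac{C_0\,n\,M}{e^{-2\beta}\mu_\zeta(\eta^{\mb 0})} \;\le\; \frac{C_0\,n^2\,e^{2\beta}}{\mu_\zeta(\eta^{\mb 0})}\;,$$
and monotonicity $\Cap_\zeta(\eta^{\mb 0},\Gamma\setminus\{\eta^{\mb 0}\})\ge\Cap_\zeta(\eta^{\mb 0},\eta^{e_1})$ yields the claimed lower bound.

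\textbf{Main obstacle.} The subtle point is the upper bound, specifically controlling the coupling error. The coupling time $T_{\mathrm{cp}}$ has mean only $e^\beta/\kappa_2$, while the \emph{full} return time $H^+_{\eta^{\mb 0}}$ is of order $e^{2\beta}$; so the naive coupling across a full return cycle does not work. The key observation is that we only need to follow one excursion out of $\eta^{\mb 0}$, which spends a time of order $e^\beta$ in $\mc V(\eta^{\mb 0})$ before either returning to $\eta^{\mb 0}$ or exiting $\mc G_{\mb 0}$; this is short enough compared to $T_{\mathrm{cp}}$ once $\kappa_2=o(1)$, and the Chebychev balance gives an error of order $\sqrt{\kappa_2}$. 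Verifying that this is $o(1/n)$, rather than merely $o(1)$, is precisely what forces the condition \eqref{c09} combined with the standing constraint $L>2n$.
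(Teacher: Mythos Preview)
Your lower bound via Thomson's principle along the sliding path $\xi_0,\dots,\xi_M$ is exactly the paper's argument, and it is correct.

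Your upper bound, however, takes a different route from the paper and contains a genuine gap. The paper does \emph{not} go through the probabilistic identity and a coupling argument; it simply applies the Dirichlet principle with the explicit test function $f=\mb 1\{\mc G_{\mb 0}\}$, giving
\[
\Cap_\zeta(\eta^{\mb 0},\Gamma\setminus\{\eta^{\mb 0}\}) \;\le\; \sum_{\eta\in\mc G_{\mb 0}}\mu_\zeta(\eta)\,R_\zeta(\eta,\mc G_{\mb 0}^c)\;.
\]
The $\eta=\eta^{\mb 0}$ term is at most $\mu_\zeta(\eta^{\mb 0})e^{-2\beta}\kappa_1$ by Proposition~\ref{s16}; each $\eta\in\mc V(\eta^{\mb 0})$ contributes $e^{-\beta}\mu_\zeta(\eta^{\mb 0})\cdot e^{-\beta}\big(\mb R(\eta,\mc G_{\mb 0}^c)+C_0\kappa_2\big)$ by Proposition~\ref{s08}, and $\mb R(\eta,\mc G_{\mb 0}^c)=\mb R(\eta,\Omega^2)\le C_0/n$ by \eqref{46}. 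Summing the sixteen terms yields the bound $C_0\mu_\zeta(\eta^{\mb 0})e^{-2\beta}n^{-1}\{1+n\kappa_1+n\kappa_2\}$, and one only needs $n(\kappa_1+\kappa_2)\to 0$, which \emph{does} follow from \eqref{c09} since $(n^2Le^{-\beta/2})^2=n^4L^2e^{-\beta}\to 0$.

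The gap in your argument is quantitative: your coupling step produces an error $O(\sqrt{\kappa_2})$, and you claim this is $o(1/n)$, i.e.\ $n^2\kappa_2\to 0$. Your displayed inequality for $n^3Le^{-\beta/2}$ is wrong: it would require $e^{\beta/2}\le n^2/4$, which is not implied by \eqref{c09}. Indeed, take $n=e^{\beta/10}$ and $L=e^{\beta/4}$: then $n^4L^2e^{-\beta}=e^{-\beta/10}\to 0$ so \eqref{c09} holds, but $n^3Le^{-\beta/2}=e^{\beta/20}\to\infty$, so $n^2\kappa_2\not\to 0$. Thus under \eqref{c09} alone your coupling error need not be $o(1/n)$, and your upper bound does not close. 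The Dirichlet-principle route avoids this entirely because the rate estimates from Propositions~\ref{s16} and~\ref{s08} enter \emph{linearly} rather than through a square root.
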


\begin{pf}
On the one hand, by the Dirichlet principle
\[
\Cap_\zeta\bigl(\eta^{\mb 0}, \Gamma\setminus\bigl\{
\eta^{\mb 0}\bigr\}\bigr) \le \bigl\langle (-L_\zeta f), f\bigr
\rangle_{\mu_\zeta}
\]
for any function $f\dvtx \Xi\to[0,1]$ which vanishes on $\Gamma
\setminus\{\eta^{\mb 0}\}$ and which is equal to $1$ at $\eta^{\mb
0}$. Taking $f = \mb1\{\mc G_0\}$, we obtain that
\[
\Cap_\zeta\bigl(\eta^{\mb 0}, \Gamma\setminus\bigl\{
\eta^{\mb 0}\bigr\}\bigr) \le \sum_{\eta\in\mc G_{\mb 0}}
\mu_\zeta(\eta) R_\zeta\bigl(\eta, \mc G_{\mb 0}^c
\bigr).
\]
By Propositions \ref{s16} and \ref{s08}, Lemma \ref{s19} and since
$\mu_\zeta(\eta) = \mu_K(\eta)/\mu_K(\Xi) =
\mu_\beta(\eta)/\mu_\beta(\Xi)$, the previous sum is bounded by
\begin{eqnarray*}
& &\frac{e^{-\beta} \mu_\beta(\eta^{\mb 0})}{
\mu_\beta(\Xi)} \sum_{\eta\in\mc V(\eta^{\mb 0})} \bigl\{
e^{-\beta} {\mb R} \bigl(\eta, \mc G_{\mb 0}^c\bigr) +
C_0 \kappa_2 e^{-\beta} \bigr\} +
\frac{\mu_\beta(\eta^{\mb 0})}{
\mu_\beta(\Xi)} e^{-2\beta} \kappa_1
\\
&&\qquad \le \frac{C_0 e^{-2 \beta} \mu_\beta(\eta^{\mb 0})}{
n \mu_\beta(\Xi)} \{1 + n \kappa_2 + n \kappa_1
\},
\end{eqnarray*}
for some finite constant $C_0$ whose value change from line to
line. By \eqref{c09}, $n (\kappa_1 + \kappa_2)$ vanishes as
$\beta\uparrow\infty$. This proves the upper bound.

On the other hand, by Thomson's principle \cite{g1}, Proposition 3.2.2,
\[
\frac{1}{\Cap_\zeta(\eta^{\mb 0}, \Gamma\setminus\{\eta^{\mb 0}\})} \le \frac{1}2 \sum_{\eta,\xi\in\Xi}
\frac{1} {\mu_\zeta(\eta)
R_\zeta(\eta,\xi)} \Phi(\eta,\xi)^2,
\]
for any unitary flow $\Phi$ from $\eta^{\mb 0}$ to $\Gamma\setminus
\{\eta^{\mb 0}\}$.

To construct such a flow, recall from \eqref{51} the path $\xi_0 =
\eta^{\mb 0}, \xi_1, \ldots, \xi_M = \eta^{e_1}$, $M=3n-2$, from
$\eta^{\mb 0}$ to $\eta^{e_1}$ obtained by sliding particles around
the square $Q$. Let $\Psi$ be the unitary flow from $\eta^{\mb 0}$ to
$\eta^{e_1}$ defined by $\Psi(\xi_i,\xi_{i+1})=1$, $0\le i<M$. By~\eqref{53} and \eqref{41},
\begin{eqnarray*}
&& \frac{1}2 \sum_{\eta,\xi\in\Xi} \frac{1} {\mu_\zeta(\eta)
R_\zeta(\eta,\xi)}
\Psi(\eta,\xi)^2\\
&&\qquad\le \frac{\mu_\beta(\Xi)}{\mu_\beta(\eta^{\mb 0})} \frac{e^{2\beta} }{
\mb R(\xi_0,\xi_1)} \biggl\{ 1
+ \frac{\kappa_1}{
\mb R(\xi_0,\xi_1) - \kappa_1} \biggr\}
\\
&&\qquad\quad{} + \frac{\mu_\beta(\Xi) e^\beta} {\mu_\beta(\eta^{\mb 0})} \sum_{i=1}^{M-1}
\frac{e^\beta}{\mb R(\xi_i,\xi_{i+1})} \biggl\{ 1 + \frac{C_0 \kappa_2}{\mb R(\xi_i,\xi_{i+1}) -
C_0\kappa_2} \biggr\}.
\end{eqnarray*}
By \eqref{51} and \eqref{52}, $\mb R(\xi_i, \xi_{i+1})\ge n ^{-1}$.
The previous expression is thus less than or equal to
\[
\frac{C_0 \mu_\beta(\Xi) n^2 e^{2\beta}} {\mu_\beta(\eta^{\mb 0})} \biggl\{ 1 + \frac{n \kappa_2}{1 -
n \kappa_2} + \frac{n \kappa_1}{1 - n \kappa_1} \biggr
\}.
\]
By \eqref{c09}, $n \kappa_1$ and $n\kappa_2$ vanishes as
$\beta\uparrow\infty$. This proves the lower bound.
\end{pf}

We are now in a position to prove the bounds \eqref{39}. Recall that
we denote by $R_\xi(\eta^{\mb x},\eta^{\mb y})$, $\mb x \neq \mb
y\in\bb T_L$, the jump rates of the trace process $\xi(t)$. Since
$\zeta(t)$, $\xi(t)$ are the traces of the process $\eta(t)$ on
$\Xi$, $\Gamma$, respectively, and since $\Xi\supset\Gamma$,
$\xi(t)$ is also the trace of $\zeta(t)$ on $\Gamma$. In particular,
by \cite{bl2}, Proposition 6.1,
%
\begin{equation}
\label{38} R_\xi\bigl(\eta^{\mb 0},\eta^{\mb x}
\bigr) = \lambda_\zeta\bigl(\eta^{\mb 0}\bigr) \mb
P^\zeta_{\eta^{\mb 0}} \bigl[ H^+_{\Gamma} = H_{\eta^{\mb
x}}
\bigr].
\end{equation}
Hence, by the definition \eqref{16} of $\theta_\beta$,
%
\begin{equation}
\label{c11} \theta^{-1}_\beta = \sum
_{\mb x\in\bb T_L} \Vert\mb x\Vert^2 r_\beta(\mb x)
= \lambda_\zeta\bigl(\eta^{\mb 0}\bigr) \sum
_{\mb x\in\bb T_L} \Vert\mb x\Vert^2 \mb P^\zeta_{\eta^{\mb 0}}
\bigl[ H^+_{\Gamma} = H_{\eta^{\mb x}} \bigr],
\end{equation}
so that
\[
\lambda_\zeta\bigl(\eta^{\mb 0}\bigr) \mb P^\zeta_{\eta^{\mb 0}}
\bigl[ H_{\Gamma\setminus\{\eta^{\mb 0}\}} < H^+_{\eta^{\mb 0}} \bigr] \le \theta^{-1}_\beta
\le 2L^2 \lambda_\zeta\bigl(\eta^{\mb 0}\bigr) \mb
P^\zeta_{\eta^{\mb 0}} \bigl[ H_{\Gamma\setminus\{\eta^{\mb 0}\}} < H^+_{\eta^{\mb 0}}
\bigr].
\]
Since
\[
\Cap_\zeta\bigl(\eta^{\mb 0}, \Gamma\setminus\bigl\{
\eta^{\mb 0}\bigr\}\bigr) = \mu_\zeta\bigl(\eta^{\mb 0}
\bigr) \lambda_\zeta\bigl(\eta^{\mb 0}\bigr) \mb
P^\zeta_{\eta^{\mb 0}} \bigl[ H_{\Gamma\setminus
\{\eta^{\mb 0}\}} < H^+_{\eta^{\mb 0}}
\bigr],
\]
we conclude that
\[
\frac{1}{\mu_\zeta(\eta^{\mb 0}) } \Cap_\zeta\bigl(\eta^{\mb 0}, \Gamma
\setminus\bigl\{\eta^{\mb 0}\bigr\}\bigr) \le \theta^{-1}_\beta
\le \frac{2L^2}{\mu_\zeta(\eta^{\mb 0}) } \Cap_\zeta\bigl(\eta^{\mb 0}, \Gamma
\setminus\bigl\{\eta^{\mb 0}\bigr\}\bigr).
\]
Assertion \eqref{39} follows now from Lemma \ref{s11}.

We may improve the upper bound above using the arguments presented in
the proof of Theorem \ref{s10}. Assume that \eqref{c10} is in force
and recall the definition of the sets $\lozenge_k$, $\blacklozenge_k$
introduced right after \eqref{34}. By \eqref{c11} and since
$\lambda_\zeta(\eta^{\mb 0}) \le C_0 e^{-2\beta}$,
\[
\theta^{-1}_\beta \le C_0 e^{-2\beta} \sum
_{k=1}^{2L} k^2 \mb
P^\zeta_{\eta^{\mb 0}} \bigl[ H^+_{\Gamma} = H_{\lozenge_k}
\bigr] \le C_0 e^{-2\beta} \sum_{k=1}^{2L}
k \mb P^\zeta_{\eta^{\mb 0}} \bigl[ H^+_{\Gamma} =
H_{\blacklozenge_k} \bigr],
\]
where we performed a summation by parts in the last step. By
Proposition \ref{lc01}, the previous sum is bounded by
\[
C_0 e^{-2\beta} L^2 \kappa_3 +
C_0 e^{-2\beta} \sum_{k=1}^{2L}
k \mb P^{\widehat{\zeta}}_{\eta^{\mb 0}} \bigl[ H^+_{\Gamma} =
H_{\blacklozenge_k} \bigr],
\]
where $\kappa_3 = \kappa_1 + \sqrt{\kappa_2 |\Xi|/L^2}$. By Lemma
\ref{lc02} and \eqref{c05}, the second term is less than or equal to
$C_0 e^{-2\beta} n^2$. By assumption \eqref{c10}, the first one is
bounded above by $C_0 e^{-2\beta} n^2$. This proves \eqref{c12}.

\section*{Acknowledgments} The authors would like to thank
A. Teixeira, F. Manzo and E. Scopolla for fruitful discussions, and
two anonymous referees for their comments.


%




\printaddresses
\end{document}